\title{On the indecomposability of $\omega^n$}
\date{May 9, 2011\\{\small(Revised October 18, 2011)}}
\author{Jared R. Corduan \and Fran{\c c}ois G. Dorais}
\newcommand{\hyphen}{\mbox{-}}
\newcommand{\THEN}{\mathrel{\Rightarrow}}
\newcommand{\IFF}{\mathrel{\Leftrightarrow}}
\newcommand{\lthen}{\mathrel{\rightarrow}}
\newcommand{\liff}{\mathrel{\leftrightarrow}}
\renewcommand{\models}{\mathrel{\vDash}}
\newcommand{\forces}{\mathrel{\Vdash}}
\newcommand{\nforces}{\mathrel{\nVdash}}
\newcommand{\NN}{\mathbb{N}}
\newcommand{\MN}{\mathfrak{N}}
\newcommand{\MM}{\mathfrak{M}}
\newcommand{\FN}{\mathcal{N}}
\newcommand{\PP}{\mathbb{P}}
\newcommand{\QQ}{\mathbb{Q}}
\newcommand{\set}[1]{\lbrace#1\rbrace}
\newcommand{\seq}[1]{\langle#1\rangle}
\newcommand{\cat}{\mathop{{}^\frown}}
\newcommand{\GINO}{\mathsf{G}}
\newcommand{\HUGO}{Player~\ensuremath{\oplus}}
\newcommand{\TINO}{Player~\ensuremath{\ominus}}
\newcommand{\Ind}{\mathsf{I}}
\newcommand{\Bnd}{\mathsf{B}}
\newcommand{\Reg}{\mathsf{R}}
\newcommand{\ACA}{\mathsf{ACA}}
\newcommand{\RCA}{\mathsf{RCA}}
\newcommand{\EIndec}{\mathsf{Elem{\hyphen}Indec}}
\newcommand{\GIndec}{\mathsf{Game{\hyphen}Indec}}
\newcommand{\LIndec}{\mathsf{Lex{\hyphen}Indec}}
\newcommand{\XIndec}{\mathsf{Iso{\hyphen}Indec}}
\newcommand{\st}{\mathsf{S}}
\newcommand{\wk}{\mathsf{W}}
\newcommand{\hw}{\mathsf{HW}}
\newcommand{\RT}{\mathsf{RT}}
\newcommand{\IPT}{\mathsf{IPT}}
\newcommand{\ADS}{\mathsf{ADS}}
\newcommand{\CAC}{\mathsf{CAC}}
\newcounter{statement}[section]
\theoremstyle{plain}
\newtheorem{theorem}[statement]{Theorem}
\newtheorem{proposition}[statement]{Proposition}
\newtheorem{lemma}[statement]{Lemma}
\newtheorem{corollary}[statement]{Corollary}
\theoremstyle{definition}
\newtheorem{question}[statement]{Question}
\newtheorem{definition}[statement]{Definition}
\newenvironment{axiom}[1]{\description\item[{\boldmath#1} ---]\itshape}{\enddescription}
\newenvironment{principle}{\itemize\item[---]}{\enditemize}
\begin{document}

\maketitle

\begin{abstract}\noindent
  We study the reverse mathematics of pigeonhole principles for finite
  powers of the ordinal $\omega$. Four natural formulations are
  presented and their relative strengths are compared. In the analysis
  of the pigeonhole principle for $\omega^2$, we uncover two weak
  variants of Ramsey's Theorem for pairs.
\end{abstract}

\section{Introduction}

In the set-theoretic literature, one finds two formulations of
the indecomposability of an ordinal $\alpha$:
\begin{description}
\item[Additive Indecomposability ---] If $\beta_0 + \cdots +
  \beta_{k-1} = \alpha$ then $\beta_i = \alpha$ for some $i < k$.
\item[Combinatorial Indecomposability ---] If $B_0 \cup \cdots \cup
  B_{k-1} = \alpha$ then $B_i$ has order-type $\alpha$ for some $i <
  k$.
\end{description}
Prima faciae, combinatorial indecomposability is stronger since
additive indecomposability corresponds to the special case where the
parts $B_0,\dots,B_{k-1}$ are required to be non-overlapping (possibly
empty) intervals. However, the additively indecomposable ordinals and
the combinatorially indecomposable ordinals are precisely the ordinal
powers of $\omega$, so the two properties are actually equivalent.

The fact that $\omega$ is combinatorially indecomposable is also known
as the Infinite Pigeonhole Principle. In reverse mathematics, the
Infinite Pigeonhole Principle was first studied by
Hirst~\cite{Hirst:Thesis}, who showed that it was equivalent to the
$\Pi^0_1$-Bounding Principle ($\Bnd{\Pi^0_1}$). The additive
indecomposability of the ordinal powers $\omega^\alpha$ was also
studied by Hirst~\cite{Hirst:OrdinalExponentiation}, but the formally
stronger combinatorial indecomposability of $\omega^\alpha$ was not
directly explored.

In this paper, we analyze the combinatorial indecomposability of
$\omega^n$ for $2 \leq n < \omega$. One difficulty with the analysis
is that ``$B_i$ has order-type $\omega^n$'' has several different
interpretations in second-order arithmetic. In Section~\ref{S:Indec},
we analyze the reverse mathematics of four natural intepretations
which are all equivalent assuming Arithmetic Comprehension ($\ACA_0$)
but diverge assuming only Recursive Comprehension ($\RCA_0$).

The analysis of the case $n = 2$ has led us to two combinatorial
principles related to Ramsey's Theorem for pairs ($\RT^2_k$), which has been
intensely studied in reverse
mathematics \cite{SeetapunSlaman,CholakJockuschSlaman,HirschfeldtShore,DzhafarovHirst,DzhafarovJockusch}.
\begin{axiom}{$\RT^2_k$}
  For every finite coloring $c: \NN^2 \to \set{0,\dots,k-1}$, there
  are a color $d < k$ and an infinite set $H$ such that $c(x,y) = d$
  for all $x, y \in H$ with $x < y$.
\end{axiom}
These two weaker principles are the Weak Ramsey Theorem for pairs
\begin{axiom}{$\wk\RT^2_k$}
  For every finite coloring $c: \NN^2 \to \set{0,\dots,k-1}$, there
  are a color $d < k$ and an infinite set $H$ such that $\set{ y \in
    \NN : c(x,y) = d}$ is infinite for every $x \in H$.
\end{axiom}
and the Hyper-Weak Ramsey Theorem for pairs
\begin{axiom}{$\hw\RT^2_k$}
  For every finite coloring $c:\NN^2\to\set{0,\dots,k-1}$, there are a
  color $d < k$ and an increasing function $h:\NN\to\NN$ such that,
  for all $0 < i_1 < i_2$, the rectangle
  \begin{equation*}
    [h(i_1-1),h(i_1)-1]\times [h(i_2-1),h(i_2)-1]
  \end{equation*}
  contains a pair with color $d$.
\end{axiom}
In Section~\ref{S:Forcing} we compare $\hw\RT^{2}_2$ to other known
combinatorial principles. In particular, we show that $\hw\RT^2_2$ is
strictly weaker than $\wk\RT^2_2$. In addition, we give a direct proof
that $\RCA_0 + \Ind\Sigma^0_2 + \hw\RT^{2}_2$ is
$\Pi^1_1$-conservative over $\RCA_0 + \Ind\Sigma^0_2$.

\begin{comment}
  Things to mention here\ldots

  Conventions:
  \begin{itemize}
  \item $\NN$ (internal numbers) vs $\omega$ (external numbers).
  \item Notation for standard parameters to ``theorem schemes.''
  \end{itemize}

\end{comment}

\paragraph{Conventions.}
A standard reference for subsystems of second-order arithmetic and
their use in reverse mathematics is Simpson~\cite{Simpson}. Formal
definitions of the basic systems $\RCA_0$ and $\ACA_0$ can be found
there. Another standard reference for induction principles used in
this paper is H{\'a}jek--Pudl{\'a}k~\cite{HajekPudlak}. While this
last reference focuses on first-order arithmetic, it is generally
straightforward to relativize their definitions and results to the
second-order setting.

Our general approach is model-theoretic rather than
proof-theoretic. Throughout the paper $\NN$ will denote the
first-order part of the model currently under consideration; we will
use $\omega$ to denote the set of standard natural numbers.
Every result in this paper indicates in parentheses the base system
over which the result is formulated. Some of the results are
parametrized by a standard natural number, which is also indicated in
parentheses.

In Section~\ref{S:Forcing}, for the purpose of forcing, we will find
it convenient to use a functional interpretation of the basic system
$\RCA_0$. Such a system was described by Kohlenbach~\cite{Kohlenbach},
but we will prefer the equivalent system described by Dorais~\cite{Dorais}. Our basic structures are of the form $\MN =
(\NN,\FN_1,\FN_2,\dots)$ where each $\FN_k$ is a set of functions
$\NN^k\to\NN$ which together form an algebraic clone: each $\FN_k$
contains all the constant functions, the projections
$\pi_i(x_1,\dots,x_k) = x_i$, and if $f \in \FN_\ell$ and
$g_1,\dots,g_\ell \in \FN_k$ then the superposition
$f(g_1(x_1,\dots,x_k),\dots,g_\ell(x_1,\dots,x_k))$ belongs to
$\FN_k$.

On top of this basic structure, we require closure under
\emph{primitive recursion}: there are distinguished $0 \in \NN$ (zero)
and $\sigma \in \FN_1$ (successor) such that for any $f \in \FN_{k-1}$
and $g \in \FN_{k+1}$ there is a unique $h \in \FN_k$ such
that \[h(0,\bar{w}) = f(\bar{w}) \quad\mbox{and}\quad
h(\sigma(x),\bar{w}) = g(h(x,\bar{w}),x,\bar{w})\] for all $x, \bar{w}
\in \NN$. Note that the uniqueness requirement on $h$ is crucial since
this is the only form of induction in this system. 

Using primitive recursion, we can define the usual arithmetic
operations such as addition, multiplication, truncated subtraction
(\(x \mathop{\dot{\smash{-}}} y = \max(x-y,0)\)) together with the
usual identities between them.  We will also assume the
\emph{dichotomy axiom} $x \mathop{\dot{\smash{-}}} y = 0 \lor y
\mathop{\dot{\smash{-}}} x = 0$, which is necessary to show that the
relation $x \leq y$ defined by $x \mathop{\dot{\smash{-}}} y = 0$ is a
linear ordering of $\NN$.

Finally, in addition to the basic axioms described above, we will
consider the second-order \emph{uniformization axiom}: For every $f
\in \FN_{k+1}$ such that $\forall \bar{w}\,\exists x\,{f(x,\bar{w}) =
  0}$, there is a $g \in \FN_k$ such that
$\forall\bar{w}\,{f(g(\bar{w}),\bar{w}) = 0}$. This axiom ensures
closure under general recursion, which is essentially equivalent to recursive comprehension.

Every functional structure $\MN$ corresponds to a set-based structure $(\NN;\mathcal{S};0,1,{+},{\cdot})$ for second-order arithmetic as described in~\cite{Simpson}, where $\mathcal{S}$ consists of all subsets of $\NN$ whose characteristic function is in $\FN_1$. The latter structure is a model of $\RCA_0$ if and only if the uniformization axiom holds in $\MN$. Conversely, given a traditional model $(\NN;\mathcal{S};0,1,{+},{\cdot})$ of $\RCA_0$, we can define $\FN_k$ to be the class of all functions $\NN^k\to\NN$ whose coded graph belongs to $\mathcal{S}$ and the resulting structure is a functional model which satisfies uniformization. Since our choice to adopt functional models is a matter of convenience, we will freely use this translation between functional models and traditional models.

\section{Combinatorial Indecomposability}\label{S:Indec}

In this section, we describe four different interpretations of the
statement that ``$\omega^n$ is combinatorially indecomposable'' and
examine their strength over $\RCA_0$.  We will state the
indecomposability principles in terms of a canonical representation of
the ordinal $\omega^n$, namely the lexicographic ordering of $\NN^n$,
which is defined by letting $(x_0,\ldots,x_{n-1}) <
(y_0,\ldots,y_{n-1})$ when
\begin{equation*}
  x_0 = y_0 \land \cdots\land x_{i-1} = y_{i-1} \land x_i < y_i
\end{equation*} 
holds for some $i < n$.  We also use the term lexicographic to
describe functions from $f:\NN^n\to\NN^n$ which preserve the lexicographic
ordering.

Usually, ``$X$ has order-type $\omega^n$'' is interpreted as saying
that the given ordering $X$ is order-isomorphic to $\omega^n$. Using
this interpretation, combinatorial indecomposability corresponds to
the following principle.

\begin{axiom}{$\XIndec_k^n$}
  For every finite coloring $c:\NN^n\to\set{0,\dots,k-1}$ there is a
  color $d < k$ such that the set
  \begin{equation*}
    A_d = \set{(x_1,\dots,x_n) \in \NN^n: c(x_1,\dots,x_n) = d}
  \end{equation*}
  is lexicographically isomorphic to $\NN^n$.
\end{axiom}

\noindent
We use $\XIndec^n$ to denote $(\forall k)\XIndec_k^n$.
Since a set $A \subseteq \NN$ is order-isomorphic to $\NN$ if and only
if it is infinite, the statement $\XIndec^1$ is precisely equivalent
to $\Bnd{\Pi^0_1}$ by Hirst's result. However, the very next case
$\XIndec_2^2$ already implies arithmetic comprehension.

\begin{proposition}[$\RCA_0$]\label{P:isom}
  $\XIndec_2^2$ implies arithmetic comprehension.
\end{proposition}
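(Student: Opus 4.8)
The plan is to show that $\XIndec^2_2$ lets us compute the range of an arbitrary injection, which suffices for arithmetic comprehension over $\RCA_0$. So fix an injection $f : \NN \to \NN$; I want to decide, for each $n$, whether $n \in \operatorname{ran}(f)$. The idea is to cook up a $2$-coloring $c : \NN^2 \to \{0,1\}$ whose two color classes, viewed as suborderings of the lexicographic order on $\NN^2$, ``look like'' $\NN^2$ or not depending on information about $\operatorname{ran}(f)$, in such a way that an isomorphism witnessing $\XIndec^2_2$ decodes $\operatorname{ran}(f)$.

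\smallskip
Here is the coloring I would try. Think of the first coordinate $x$ as indexing the $x$-th ``block'' (a copy of $\NN$), and use $f$ to sabotage blocks: on row $x$, color $c(x,y) = 1$ if $x \in \operatorname{ran}(f\res y)$, i.e.\ if some $m < y$ has $f(m) = x$, and $c(x,y) = 0$ otherwise. Note that in each fixed row $x$ this is $\Sigma^0_0$ in $f$ and eventually constant: if $x \in \operatorname{ran}(f)$ then row $x$ is $0$ on a finite initial segment and $1$ thereafter, while if $x \notin \operatorname{ran}(f)$ then row $x$ is identically $0$. Thus color class $A_0$ consists of all of row $x$ when $x\notin\operatorname{ran}(f)$ and a nonempty finite initial segment of row $x$ when $x\in\operatorname{ran}(f)$; color class $A_1$ consists of a cofinite tail of row $x$ exactly when $x\in\operatorname{ran}(f)$, and nothing from row $x$ otherwise. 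Since $f$ is injective, $\operatorname{ran}(f)$ is either finite or infinite; in the latter (typical) case $A_1$ is a lexicographic order consisting of infinitely many copies of $\NN$ (one genuine tail-copy per element of $\operatorname{ran}(f)$), hence isomorphic to $\NN^2$, and $A_0$ is infinitely many finite initial segments interleaved with whole rows, which is \emph{not} of the form $\NN^2$ (its order type is $\omega\cdot(\text{stuff}) $ but the ``finite block'' rows force the wrong shape — more carefully, $A_0$ fails to be isomorphic to $\NN^2$ because it has infinitely many maximal $\le$-intervals of type $<\omega$ below a fixed point, which $\NN^2\cong\omega^2$ does not).

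\smallskip
Now invoke $\XIndec^2_2$ to get $d<2$ and an isomorphism $g : \NN^2 \to A_d$ (with respect to the lexicographic orders). By the previous paragraph $d = 1$, and $A_1$ is exactly the union of the tails $\{(x,y) : x\in\operatorname{ran}(f),\ \exists m<y\, f(m)=x\}$. The point is that from $g$ I can \emph{compute} $\operatorname{ran}(f)$: the first coordinates occurring in $A_1$ are precisely $\operatorname{ran}(f)$, and $g$ enumerates $A_1$ in order-type $\omega^2$. Concretely, $g(\omega\cdot j + 0)$ — that is, the $\le$-least element of the $(j{+}1)$-st ``limit block'' of $A_1$ in the sense of $g$ — has first coordinate equal to the $(j{+}1)$-st element of $\operatorname{ran}(f)$ in increasing order; since all this is $\Sigma^0_0$ in $g$ and $f$, we recover $\operatorname{ran}(f)$ as a set. (Equivalently: $n \in \operatorname{ran}(f)$ iff $n$ appears as a first coordinate of some element of $A_1$ iff $\exists s\, \exists m<s\ (f(m) = n)$, but more usefully the isomorphism $g$ gives a total computable way to list $\operatorname{ran}(f)$ in order, bypassing the $\Sigma^0_1$-ness.) Hence $\operatorname{ran}(f)$ exists as a set, which is Simpson's characterization of $\ACA_0$ over $\RCA_0$, and also handles the boundary case $\operatorname{ran}(f)$ finite (then $n\in\operatorname{ran}(f)$ is decided by a single bounded search, or one arranges the coloring so this case cannot arise).

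\smallskip
I expect the main obstacle to be the \emph{order-type bookkeeping}: I must verify carefully, inside $\RCA_0$, both that $A_1$ really is lexicographically isomorphic to $\NN^2$ when $\operatorname{ran}(f)$ is infinite (so that $\XIndec^2_2$ genuinely forces $d=1$ and hands me a usable $g$) and that $A_0$ is \emph{not} so isomorphic — the latter is the delicate direction, since ``not isomorphic to $\omega^2$'' has to be argued from an intrinsic order-theoretic obstruction (a block of finite order type sitting cofinally-often below fixed points) rather than by an appeal to ordinal arithmetic unavailable in $\RCA_0$. A cleaner route, which I would ultimately prefer, is to sidestep comparing $A_0$ to $\NN^2$ altogether: argue instead that whichever class $A_d$ the principle selects, the witnessing isomorphism $g$ is powerful enough to compute $\operatorname{ran}(f)$ — if $d=0$, then inside each block-of-length-$\ge 2$-in-$A_0$ one reads off exactly which $x$ lie in $\operatorname{ran}(f)$ (they are the ones whose block is truncated), and if $d=1$ one proceeds as above — so that a single case analysis on $d$ yields $\operatorname{ran}(f)$ either way. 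Finishing the argument is then routine: unwind the definition of lexicographic isomorphism, extract the $\Sigma^0_0(g,f)$ definition of $\operatorname{ran}(f)$, and apply $\Delta^0_1$-comprehension.
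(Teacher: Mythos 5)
Your coloring is identical to the paper's ($c(x,y)=1$ iff $x\in\{f(0),\dots,f(y-1)\}$), and your ``cleaner route'' at the end --- case-split on $d$ and extract $\operatorname{ran}(f)$ from the isomorphism in either case --- is exactly what the paper does: if $d=1$ then $h_1(n,0)$ enumerates $\operatorname{ran}(f)$ in increasing order, and if $d=0$ then $h_1(n+1,0)-1$ enumerates $\NN\setminus\operatorname{ran}(f)$ in increasing order, so in both cases you get a $\Delta^0_1$ definition of $\operatorname{ran}(f)$. One correction is important, though: your ``main obstacle'' is not a bookkeeping subtlety but a genuine error. The claim that $A_0$ cannot be lexicographically isomorphic to $\NN^2$ is false whenever $\NN\setminus\operatorname{ran}(f)$ is infinite. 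In that case the finite rows of $A_0$ are absorbed by the infinite rows that follow them: if row $x$ is finite and row $x+1$ is infinite, then $(x+1,0)$ \emph{has} an immediate predecessor in $A_0$ (the top of row $x$), so $(x+1,0)$ is not a limit point and rows $x$ and $x+1$ fuse into a single convex $\omega$-segment (finite ${}+\omega=\omega$). The ``maximal finite interval isolated between two limit points'' you invoke simply never occurs in $A_0$; the limit points of $A_0$ are precisely the $(x,0)$ with $x-1\notin\operatorname{ran}(f)$, and each block between consecutive limit points has order type $\omega$. So the case $d=0$ is live, and the case analysis is mandatory, not merely preferable --- but since you ultimately handle both cases, your final sketch does land on the paper's argument.
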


\begin{proof}
  We show that $\XIndec_2^2$ implies that the range of an
  arbitrary injection $f:\NN\to\NN$ exists. Consider the coloring
  $c:\NN^2\to 2$ defined by letting $c(x,y)=1$ if and only if $x \in
  \set{f(0),\ldots,f(y-1)}$. Note that $c(x,0) = 0$ for every $x$.

  Let $A_0 = c^{-1}(0)$ and $A_1 = c^{-1}(1)$. On the one hand, if
  $h:\NN^2\to A_1$ is an isomorphism, then $h_1(n,0)$ must be the
  $(n+1)$-th element of the range of $f$. On the other hand, if
  $h:\NN^2\to A_0$ is an isomorphism, then $h_1(n+1,0)-1$ must be the
  $(n+1)$-th element in the complement of the range of $f$.
\end{proof}

\noindent
Of course, it is easy to see that $\ACA_0$ proves $\XIndec^n$ for
all $n < \omega$.

\subsection{Indecomposability and Induction}

The weakest statements of indecomposability for $\omega^n$ that we will
consider are the following $\Pi^1_1$ statements.

\begin{axiom}{$\EIndec_k^n$}
  For every finite coloring $c: \NN^n \to \set{0,\dots,k-1}$ there is
  a color $d < k$ such that
  \begin{equation*}
    (\exists^\infty x_1)(\exists^\infty x_2)\cdots(\exists^\infty x_n)
    [c(x_1,x_2,\dots,x_n) = d].
  \end{equation*}
\end{axiom}

\noindent
We will use $\EIndec^n$ to denote $(\forall k)\EIndec_k^n$. Note that
$\EIndec_k^n$ is provable in $\RCA_0$ for every $k < \omega$, but the principle
$\EIndec^n$ is nontrivial.

The statement $\EIndec^1$ says that for every
finite coloring $c:\NN\to\set{0,\dots,k-1}$ there is a color $d < k$
such that the set $A_d = \set{x: c(x) = d}$ is infinite --- this
statement is equivalent to $\Bnd{\Pi^0_1}$.  We can generalize this as
follows.

\begin{theorem}[$\RCA_0$; $1 \leq n < \omega$]\label{T:elem}\mbox{}
  \begin{enumerate}[\bfseries\upshape(a)]
  \item\label{T:elem:ei2bnd} $\EIndec^n$ implies $\Bnd\Pi^0_{n}$.
  \item\label{T:elem:ind2ei} $\Ind\Sigma^0_{n+1}$ implies $\EIndec^n$.
  \end{enumerate}
\end{theorem}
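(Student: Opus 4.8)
The plan is to prove both halves by induction on $n$, drawing the base case $n=1$ from Hirst's theorem that $\EIndec^1$ is equivalent to $\Bnd\Pi^0_1$ (and recalling $\Bnd\Pi^0_1\IFF\Bnd\Sigma^0_2$, which lies strictly below $\Ind\Sigma^0_2$). One preliminary observation is useful throughout: $\EIndec^n$ implies $\EIndec^{n-1}$. Indeed, given $c\colon\NN^{n-1}\to k$, apply $\EIndec^n$ to $\tilde c(x_1,\dots,x_n)=c(x_1,\dots,x_{n-1})$; the innermost ``$(\exists^\infty x_n)$'' is then vacuous, so the color it produces is cofinal at every level for $c$ itself. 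In particular, in the inductive step of part~\ref{T:elem:ei2bnd} we may freely use $\Bnd\Pi^0_{n-1}$, and in the inductive step of part~\ref{T:elem:ind2ei} --- since $\Ind\Sigma^0_{n+1}$ implies $\Ind\Sigma^0_n$ --- we may freely use $\EIndec^{n-1}$.

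For part~\ref{T:elem:ind2ei}, the base case is direct: if $c\colon\NN\to k$ has all color classes finite then $\forall d<k\,\exists b_d\,\forall x>b_d\,(c(x)\neq d)$, and $\Bnd\Sigma^0_2$ yields a single $B$ bounding all the $b_d$, whence $c(B+1)$ has no admissible value. For the step, fix $c\colon\NN^n\to k$; by the induction hypothesis, for every $x_1$ some color is cofinal at every level in the sub-coloring $(x_2,\dots,x_n)\mapsto c(x_1,\dots,x_n)$, and we must produce one color doing this for cofinally many $x_1$ --- which is exactly $\EIndec^n_k$. The difficulty is that ``color $d$ is cofinal at every level in slice $x_1$'' has the complexity of an $(n-1)$-fold nested ``$\exists^\infty$'' (nominally $\Pi^0_{2n-2}$), so the naive route --- negate the conclusion, then uniformize the $k$ resulting bounds --- would demand much more induction than we have. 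I would instead replace each ``$\exists^\infty$'' by a bounded ``stage'' surrogate, reducing the whole argument to a single appeal to $\Bnd\Sigma^0_{n+1}$ (equivalently, the $\Sigma^0_{n+1}$ least-number principle): locate the least color that is ``stable'' in the relevant bounded sense, and show that if every color fails to be stable then some slice has all of its color classes finite, contradicting the induction hypothesis applied inside that slice. Carrying out this bookkeeping so that everything stays at the $\Sigma^0_{n+1}$ level is the main obstacle.

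For part~\ref{T:elem:ei2bnd}, suppose $\varphi\in\Pi^0_n$, $a$ is given, $\forall i<a\,\exists w\,\varphi(i,w)$ holds, yet no uniform bound exists; writing $\neg\varphi(i,w)=\exists y\,\rho(i,w,y)$ with $\rho\in\Pi^0_{n-1}$, the failure of bounding reads $\forall b\,\exists i<a\,\forall w<b\,\exists y\,\rho(i,w,y)$, which by $\Bnd\Pi^0_{n-1}$ can be rewritten as $\forall b\,\exists i<a\,\exists Y\,\Theta(i,b,Y)$ with $\Theta\in\Pi^0_{n-1}$. I would then define a $\Delta^0_1$ coloring $c\colon\NN^n\to a+1$ in which the first coordinate ranges over candidate scales $b$, one of the remaining coordinates carries the auxiliary data $(i,Y)$, and the others run a bounded stagewise evaluation of the $\Pi^0_{n-1}$-matrix $\Theta$ --- the point being that the $n-1$ nested ``$\exists^\infty$'' layers of $\EIndec^n$ can be matched to the $n-1$ quantifier alternations of $\Theta$, with the universal parts of ``$(\exists^\infty x_j)$'' serving as $\Theta$'s universal quantifiers --- and with $a$ reserved as a default value. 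Applying $\EIndec^n$ yields a color cofinal at every level. The default color cannot be it: an occurrence of it cofinal at every level would, via $\Bnd\Pi^0_{n-1}$, force $\forall i<a\,\exists w<b\,\varphi(i,w)$ for cofinally many $b$, i.e.\ a uniform bound. And a genuine color $i^*$ cofinal at every level decodes --- precisely because the ``$\exists^\infty$'' layers mirror $\Theta$'s alternations --- to $\forall w\,\neg\varphi(i^*,w)$, contradicting the hypothesis. The delicate part, and the main obstacle, is to arrange the coloring so that it is genuinely recursive while still faithfully encoding $\Theta$'s quantifier alternations (getting the polarities of the ``$(\exists^\infty x_j)$'' moves to line up with those of $\Theta$), and then to decode the cofinal color class correctly; this is exactly where having the iterated ``$\exists^\infty$'' --- rather than a single infinite set, which only buys $\Bnd\Pi^0_1$ --- is essential.
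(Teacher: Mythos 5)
Your proposal correctly identifies both difficulties but at each one stops at exactly the point where the paper deploys a nontrivial technical tool that you do not have, so neither half is a completed proof.

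For part~\eqref{T:elem:ei2bnd}, your plan --- negate bounding for a $\Pi^0_n$ formula, apply $\Bnd\Pi^0_{n-1}$ to compress the failure to a $\Pi^0_{n-1}$ matrix $\Theta$, and build a $\Delta^0_1$ coloring whose $n-1$ iterated $\exists^\infty$'s ``mirror'' $\Theta$'s alternations --- is in spirit what the paper does, but the step you flag as ``the delicate part'' (arranging that the coloring is genuinely recursive while its iterated-limit behaviour encodes $\Theta$) is precisely the content of Lemma~\ref{L:Limit} (\v{S}vejdar's theorem): under $\Bnd\Pi^0_{n-1}$, every total $\Sigma^0_{n+1}$-definable map is the $n$-fold iterated limit of a \emph{strongly $(n-1)$-stable} function in the ground model. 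The notion of stability is the missing organizing idea; it is what guarantees that a cofinal-at-every-level color for the built coloring actually decodes to a genuine $\Sigma^0_n$-defined value, since for a stable coloring ``$\exists^\infty\cdots\exists^\infty$'' collapses to ``$\forall^\infty\cdots\forall^\infty$.'' Without stability, the polarity-matching you describe cannot be made to work: the $\exists^\infty$'s unfold to $\forall m\,\exists x>m$ pairs, giving $2n$ alternations rather than the $n-1$ you would need, and there is no mechanism forcing the $\exists$'s and $\forall$'s to line up with $\Theta$'s. The paper packages this into Proposition~\ref{P:Sapprox}, proving equivalence of $\EIndec^n$ restricted to strongly $(n-1)$-stable colorings with $\Reg\Pi^0_{n-1}$ (equivalent to $\Bnd\Pi^0_n$); part~\eqref{T:elem:ei2bnd} then falls out.

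For part~\eqref{T:elem:ind2ei}, your inductive strategy is genuinely different from the paper's, but again the acknowledged gap is the whole proof. Unwinding your sketch: for each $x_1$, $\EIndec^{n-1}_k$ (provable in $\RCA_0$ for standard $k$) gives \emph{some} color cofinal at every level in the slice, but deciding which color this is has complexity $\Pi^0_{2n-2}$. Negating the conclusion and applying bounding to push a single bound past all slices would therefore require $\Bnd\Sigma^0_{2n-2}$, which for $n\geq 4$ is strictly stronger than the given $\Ind\Sigma^0_{n+1}$; and the ``bounded stage surrogate'' you propose is not spelled out and it is not clear how it would bring the complexity down to $\Sigma^0_{n+1}$. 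The paper sidesteps the problem by not doing an internal induction at all. It works model-theoretically inside the $\Delta^0_{n+1}$-definable sets $\MM$ and iterates the Jockusch--Stephan cohesiveness lemma (Lemma~\ref{L:JockuschStephan}): at each of $n-1$ stages it picks a cohesive set $X_i$ with $(c_{i-1}\oplus X_i)'' \equiv_T c_{i-1}''$, so that the iterated limits along $X_1,\dots,X_{n-1}$ exist and the final $1$-ary coloring $c_{n-1}$ is $\leq_T c_0^{(n)}$, hence lives in $\MM$; this controls the complexity growth by jump arithmetic rather than by induction on formulas. That double-jump control is the key ingredient your approach has no substitute for. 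In short: the plan is sensible as a roadmap, the obstacles are correctly named, but both need the specific machinery (stability plus \v{S}vejdar for~\eqref{T:elem:ei2bnd}, cohesiveness plus jump control for~\eqref{T:elem:ind2ei}) that the paper supplies.
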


\noindent
Part \eqref{T:elem:ei2bnd} of Theorem~\ref{T:elem} 
will follow from Proposition~\ref{P:Sapprox}.
Part \eqref{T:elem:ind2ei} is proved in Proposition~\ref{P:IndAndEIndec}.  

A principle equivalent to bounding will be used in the proof of Theorem~\ref{T:elem}.
In \cite[\S{I.2(b)}]{HajekPudlak}, H\'ajek and Pudl\'ak introduced
the Regularity Principle $\Reg{\Gamma}$ which says that if $\phi(x,y)$ is a $\Gamma$ formula then
\begin{equation*}
  (\exists^\infty x)(\exists y < k)\phi(x,y) \liff (\exists y < k)(\exists^\infty x)\phi(x,y)
\end{equation*}
holds for all $k \in \NN$. They further show that
$\Reg{\Sigma^0_{n+1}}$, $\Reg{\Pi^0_{n}}$, and $\Bnd{\Pi^0_{n+1}}$ 
are equivalent for every $n < \omega$ \cite[\S{I}.2.23(4)]{HajekPudlak}.

\begin{comment}

\begin{lemma}[$\RCA_0$]\label{L:JockuschSoare}
  If $A'$ exists and $f:\NN\to\{0,1\}$ is a partial $A$-computable
  function, then $f$ has a total extension $g$ such that $(g\oplus
  A)'\equiv A'$.
\end{lemma}

\begin{lemma}[$\RCA_0$]\label{L:Friedberg}
  Suppose $A'$ exists.  For any function $X:\NN\to\NN$ there is a
  function $B:\NN\to\NN$ such that $B' \equiv_T (B \oplus A)'
  \equiv_T (X \oplus A)'$.
\end{lemma}

\begin{proof}
  If possible, let $u(\tau,n)$ be the first $\sigma \supseteq \tau$
  such that $\Phi^{A,\sigma}_n(n){\downarrow}$; otherwise let
  $u(\tau,n) = \tau$. Note that $u$ is $A'$-computable and hence
  exists in our model.  Define by primitive recursion $\sigma_0 =
  \seq{}$ and
  \begin{equation*}
    \sigma_{n+1} = u(\sigma_n\cat\seq{X(n)},n).
  \end{equation*}
  Then, let $B=\bigcup_{n=0}^\infty \sigma_n$.

  It is easy to see that $(B\oplus A)'=(X\oplus A)'$ and that $B'\geq
  (B\oplus A)'$, so it remains to show that $B'$ can be computed from
  $(B\oplus A)'$.  First note that we can compute $X$ from $(B\oplus
  A)'$.  We can therefore compute the function which takes $n$ to
  $\sigma_n$.  Finally, notice that $e\in B'$ if and only if
  $\Phi_e^{A,\sigma_e}(e){\downarrow}$.  For if
  $\Phi_e^{A,\sigma_e}(e){\uparrow}$ then there is no extension
  $\sigma$ of $\tau_e$ such that $\Phi_e^{A,\sigma}(e){\downarrow}$.
\end{proof}

\end{comment}

The Regularity Priciple is useful in handling a certain class of colorings.
A function $c:\NN^{m+n}\to\NN$ is \emph{weakly $n$-stable} if for all
$x_1,\dots,x_m \in \NN$ there is a $y \in \NN$ such that
\begin{equation*}
  (\forall^\infty z_1)\cdots(\forall^\infty z_n)[y = c(x_1,\dots,x_m,z_1,\dots,z_n)].
\end{equation*}
This is very similar to saying that the iterated limit
\begin{equation*}
  \lim_{z_1\to\infty} \cdots \lim_{z_n\to\infty} c(x_1,\dots,x_m,z_1,\dots,z_n)
\end{equation*}
exists for all $x_1,\dots,x_m \in \NN$. However, the usual
definition of such limits requires that intermediate limits all exist
too, which is not required by weak $n$-stability. We say that $c$ is
\emph{strongly $n$-stable} if it is weakly $i$-stable for each $1
\leq i \leq n$; this guarantees the existence of all intermediate
limits and corresponds to the usual meaning of iterated limit.  The
two notions agree when $n = 1$ and they agree with the definition of
\emph{stable} introduced by Cholak, Jockusch, and
Slaman~\cite{CholakJockuschSlaman}.
 
If $c:\NN^{m+n}\to\NN$ is strongly $n$-stable then the iterated limit
\begin{equation*}
  f(x_1,\dots,x_m) = \lim_{z_1\to\infty} \cdots \lim_{z_n\to\infty} c(x_1,\dots,x_m,z_1,\dots,z_n)
\end{equation*}
is a total $\Sigma^0_{n+1}$-definable map $f:\NN^m\to\NN$.\footnote{More
  precisely, the graph of $f$ is $\Sigma^0_{n+1}$
  definable. Note that the map $f$ need not be a function of the
  current model.} The converse of this fact is is due to
\v{S}vejdar~\cite{Svejdar}.

\begin{lemma}[$\RCA_0 + \Bnd{\Pi^0_{n-1}}$; $1 \leq n < \omega$]\label{L:Limit}
  Every total $\Sigma^0_{n+1}$-definable map $f:\NN\to\NN$ is
  representable in the form
  \begin{equation*}
    f(x) = \lim_{z_1\to\infty} \cdots \lim_{z_n\to\infty} c(x,z_1,\dots,z_n),
  \end{equation*}
  where $c:\NN^{n+1}\to\NN$ is a strongly $n$-stable function.
\end{lemma}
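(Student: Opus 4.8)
The plan is to argue by induction on $n$, using as the only induction and bounding resources two standard facts (see H\'ajek--Pudl\'ak): since $f$ is total, a $\Sigma^0_{n+1}$ graph is automatically $\Pi^0_{n+1}$ as well; and $\Bnd\Pi^0_{n-1}$ is equivalent to $\Bnd\Sigma^0_n$ and implies $\Bnd\Gamma$ for every $\Gamma$ of lower complexity, in particular $\Bnd\Pi^0_{n-2}$.

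For the base case $n=1$ I would work in $\RCA_0$ alone, since $\Bnd\Pi^0_0$ is trivial. Writing the graph of $f$ as $f(x)=y\liff(\exists u)(\forall v)\,R(x,y,u,v)$ with $R$ a $\Delta^0_0$ matrix, I would let $c(x,z)$ be the value $y$ taken from the least pair $\seq{y,u}\le z$ --- under a monotone pairing, so that $\seq{y,u}<\seq{y,u'}$ iff $u<u'$ --- satisfying $(\forall v\le z)\,R(x,y,u,v)$, with $c(x,z)=0$ if no such pair exists; this $c$ is a function of the model. To prove $\lim_z c(x,z)=f(x)$, set $y_0=f(x)$, let $u_0$ be least with $(\forall v)\,R(x,y_0,u_0,v)$, and put $p_0=\seq{y_0,u_0}$. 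Every pair $p<p_0$ is eventually disqualified: decoding $p$ to $\seq{y,u}$, if $y\ne y_0$ then $f(x)=y_0\ne y$ gives $\lnot(\forall v)\,R(x,y,u,v)$, while if $y=y_0$ then $u<u_0$ and minimality of $u_0$ again gives $\lnot(\forall v)\,R(x,y_0,u,v)$; either way some $v$ has $\lnot R$. Since ``$(\exists v)\,\lnot R$'' is $\Sigma^0_1$ and $p$ ranges over the bounded set $\{p:p<p_0\}$, $\Bnd\Sigma^0_1$ (available in $\RCA_0$) yields a single $W$ with $(\forall p<p_0)(\exists v<W)\,\lnot R$; then for every $z\ge\max(W,p_0)$ the least qualifying pair at stage $z$ is exactly $p_0$, so $c(x,z)=y_0$. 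As $n=1$, strong $1$-stability coincides with weak $1$-stability, which is precisely the existence of this limit.

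For the inductive step I would assume the lemma for $n-1$ and work over $\RCA_0+\Bnd\Pi^0_{n-1}$. The key intermediate claim is that every total $\Sigma^0_{n+1}$-definable $f:\NN\to\NN$ can be written $f(x)=\lim_{z_1}h(x,z_1)$ for a total $\Sigma^0_n$-definable $h:\NN^2\to\NN$; this is the base-case construction lifted one quantifier. Writing the graph of $f$ as $(\exists u)(\forall w)\,R(x,y,u,w)$ with $R\in\Sigma^0_{n-1}$, let $h(x,z_1)$ be the value from the least $\seq{y,u}\le z_1$ with $(\forall w\le z_1)\,R(x,y,u,w)$. Using $\Bnd\Sigma^0_{n-1}$ the defining condition is $\Sigma^0_{n-1}$, so the graph of $h$ is $\Sigma^0_n$-definable and $h$ is total (though, as in the footnote above, $h$ need not be a function of the model); using $\Bnd\Sigma^0_n$, i.e.\ $\Bnd\Pi^0_{n-1}$, one bounds the finitely many disqualifying $\Sigma^0_n$-witnesses exactly as in the base case and gets $\lim_{z_1}h(x,z_1)=f(x)$. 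Then I would apply the induction hypothesis --- legitimate because $\Bnd\Pi^0_{n-1}$ implies $\Bnd\Pi^0_{n-2}$ --- to the coded map $\seq{x,z_1}\mapsto h(x,z_1)$, obtaining a strongly $(n-1)$-stable $\hat c$ with $h(x,z_1)=\lim_{z_2}\cdots\lim_{z_n}\hat c(\seq{x,z_1},z_2,\dots,z_n)$, and set $c(x,z_1,\dots,z_n)=\hat c(\seq{x,z_1},z_2,\dots,z_n)$, so that $f(x)=\lim_{z_1}\cdots\lim_{z_n}c(x,z_1,\dots,z_n)$. It then remains to verify that $c$ is strongly $n$-stable: weak $i$-stability for $1\le i\le n-1$ is inherited verbatim from the strong $(n-1)$-stability of $\hat c$, with $\seq{x,z_1}$ playing the role of the parameter; and weak $n$-stability holds with witness $f(x)$, since for all sufficiently large $z_1$ one has $h(x,z_1)=f(x)$, and then $c(x,z_1,\dots,z_n)=h(x,z_1)=f(x)$ for all sufficiently large $z_2,\dots,z_n$.

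The step I expect to be the main obstacle is the quantifier bookkeeping in the intermediate claim: one must check that the bounded-quantifier condition defining $h$ genuinely lands in $\Sigma^0_{n-1}$ (this is where the low-level instances of bounding enter), that the witnesses to be collected are exactly $\Sigma^0_n$ so that $\Bnd\Pi^0_{n-1}\equiv\Bnd\Sigma^0_n$ is precisely the sufficient hypothesis, and that they are indexed by a bounded set. One should also keep in mind throughout that $f$, $h$, and the intermediate limits are only assumed definable, not genuine functions of the model, so every manipulation must be carried out at the level of formulas.
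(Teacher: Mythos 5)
Your overall strategy matches the paper's. The paper cites \v{S}vejdar's single-limit reduction theorem---under $\Bnd{\Pi^0_{n-1}}$, every total $\Sigma^0_{n+1}$-definable map is the limit of a $1$-stable $\Sigma^0_n$-definable one---and iterates it $n$ times, tracking $i$-stability along the way; you re-derive that one-step reduction by hand and then iterate it by induction on $n$, which is the same decomposition in different clothing, and your ``key intermediate claim'' is precisely what \v{S}vejdar proves. The one spot that needs patching is the phrase ``exactly as in the base case'': in the base case you took $u_0$ to be \emph{least} with $(\forall v)\,R(x,y_0,u_0,v)$, which is legitimate there because $\RCA_0$ proves $\Lst{\Pi^0_1}$, but in the inductive step the analogous set of witnesses is $\Pi^0_n$ and $\Lst{\Pi^0_n}$ ($\equiv\Ind{\Sigma^0_n}$) does not follow from $\Bnd{\Pi^0_{n-1}}$. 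The repair is routine: take $u_0$ to be \emph{any} witness, set $p_0=\seq{y_0,u_0}$, and disqualify only those pairs $p=\seq{y,u}\leq p_0$ with $y\neq y_0$, which requires no minimality at all, only the single-valuedness of $f$; this already suffices, because for $z_1\geq\max(W,p_0)$ the least qualifying $p\leq z_1$ lies at or below $p_0$, and every such $p$ with first component distinct from $y_0$ has been disqualified before stage $W$, so the surviving least pair still outputs $y_0$. With that adjustment, your verification of strong $n$-stability and your complexity bookkeeping (collapsing the bounded universal via $\Bnd{\Sigma^0_{n-1}}$, then collecting disqualifying witnesses via $\Bnd{\Sigma^0_n}\equiv\Bnd{\Pi^0_{n-1}}$) are exactly the resource accounting the lemma's hypothesis is calibrated for.
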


\begin{proof}
  \v{S}vejdar \cite[Theorem~1]{Svejdar} shows under
  $\Bnd{\Pi^0_{n-1}}$ that that for every total
  $\Sigma^0_{n+1}$-definable map $f:\NN\to\NN$ there is a $1$-stable
  $\Sigma^0_n$-definable (indeed,
  $\Sigma^0_0(\Sigma^0_{n-1})$-definable) map $f':\NN^{2}\to\NN$ such
  that
  \begin{equation*}
    f(x) = \lim_{z_1\to\infty} f'(x,z_1)
  \end{equation*}
  for all $x \in \NN$. Iterating this result, we find
  $\Sigma^0_{n+1-i}$-definable strongly $i$-stable maps
  $f^{(i)}:\NN^{i+1}\to\NN$ such that
  \begin{equation*}
    f(x) = \lim_{z_1\to\infty}\cdots\lim_{z_i\to\infty} f^{(i)}(x,z_1,\dots,z_i)
  \end{equation*}
  for all $x \in \NN$. The $n$-th such map is
  $\Sigma^0_1$-definable and hence corresponds to an actual function
  $c:\NN^{n+1}\to\NN$ in our model which acts as claimed.
\end{proof}

\begin{proposition}[$\RCA_0$; $1 \leq n < \omega$]\label{P:Sapprox}
  $\Bnd{\Pi^0_n}$ is equivalent to the following statement.
  For any strongly $(n-1)$-stable $c:\NN^n\to\set{0,1,\dots,k-1}$, there is a $d < k$ such that
  \begin{equation*}
    (\exists^\infty x_1)(\exists^\infty x_2)\cdots(\exists^\infty x_n)[c(x_1,x_2,\dots,x_n) = d].
  \end{equation*}
\end{proposition}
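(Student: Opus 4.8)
Throughout write $(\ast)$ for the displayed statement, and recall from the cited result of H\'ajek and Pudl\'ak that $\Bnd{\Pi^0_n}$ is equivalent to $\Reg{\Sigma^0_n}$. The plan rests on a single observation: if $c\colon\NN^n\to\set{0,\dots,k-1}$ is strongly $(n-1)$-stable, then its iterated limit $g(x)=\lim_{z_1}\cdots\lim_{z_{n-1}}c(x,z_1,\dots,z_{n-1})$ is a total $\Sigma^0_n$-definable map into $\set{0,\dots,k-1}$, and for every color $d$ one has $(\exists^\infty x_1)\cdots(\exists^\infty x_n)[c(x_1,\dots,x_n)=d]$ if and only if $(\exists^\infty x)[g(x)=d]$. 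The ``if'' direction follows at once on unwinding ``$g(x)=d$'' into the nested statement $(\forall^\infty z_1)\cdots(\forall^\infty z_{n-1})[c(x,z_1,\dots,z_{n-1})=d]$; the ``only if'' direction is a short induction using that an infinite set and a cofinite set of natural numbers always meet.

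For the implication $\Bnd{\Pi^0_n}\THEN(\ast)$, let $c$ be strongly $(n-1)$-stable and form $g$. Since $g$ is total into $\set{0,\dots,k-1}$, the hypothesis of $\Reg{\Sigma^0_n}$ for the formula ``$g(x)=y$'' and the bound $k$ is satisfied trivially, so some $d<k$ has $(\exists^\infty x)[g(x)=d]$; the observation converts this into the conclusion of $(\ast)$. (For $n=1$ this is just Hirst's equivalence of $\EIndec^1$ with $\Bnd{\Pi^0_1}$.)

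The implication $(\ast)\THEN\Bnd{\Pi^0_n}$ I would prove by induction on $n$, the base case $n=1$ again being Hirst's theorem. For $n\geq2$, first observe that $(\ast)$ for $n$ implies $(\ast)$ for $n-1$: padding a strongly $(n-2)$-stable coloring of $\NN^{n-1}$ with an ignored $n$-th coordinate gives a strongly $(n-1)$-stable coloring of $\NN^n$ with the same ``$\exists^\infty$-behavior''. Hence, by the induction hypothesis, $\Bnd{\Pi^0_{n-1}}$ holds, which lets me invoke Lemma~\ref{L:Limit} (with $n-1$ in place of $n$). To establish $\Reg{\Sigma^0_n}$, fix a $\Pi^0_{n-1}$ formula $\rho(x,y,w)$, put $\phi(x,y)\equiv\exists w\,\rho(x,y,w)$, fix $k$, assume $(\exists^\infty x)(\exists y<k)\phi(x,y)$, and let $S=\set{x:\exists y<k\,\phi(x,y)}$, so that $S$ is unbounded. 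The characteristic function of $\rho$ is total and $\Sigma^0_n$-definable, so by Lemma~\ref{L:Limit} there is an actual function $r$ of the model with $\rho(x,y,w)$ equivalent to $\lim_{z_1}\cdots\lim_{z_{n-1}}r(x,y,w,z_1,\dots,z_{n-1})=1$. Now define a coloring $c\colon\NN^n\to\set{0,\dots,k-1}$ thus: on input $(x,z_1,\dots,z_{n-1})$, take the largest $x'\leq x$ for which some pair $\seq{y,w}$ satisfies $y<k$, $w\leq z_1$, and $r(x',y,w,z_1,\dots,z_{n-1})=1$, and output the first coordinate of the least such pair for this $x'$; output $0$ if there is no such $x'$. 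A level-by-level inspection of the nested limits---only finitely many values of $r$ are consulted once $z_1$ is fixed, and relaxing the bound $w\leq z_1$ is harmless in the outermost limit---shows $c$ is strongly $(n-1)$-stable and that its iterated limit $g$ is the ``hold the last value'' map: for $x$ above some fixed element of $S$, $g(x)$ is the first coordinate of the least pair $\seq{y,w}$ with $y<k$ and $\rho(x^{*},y,w)$, where $x^{*}$ is the largest element of $S$ with $x^{*}\leq x$.

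Applying $(\ast)$ to this $c$ produces a $d<k$ with $(\exists^\infty x)[g(x)=d]$. Because $S$ is unbounded, consecutive elements of $S$ leave only finite gaps, so $x^{*}$ grows without bound with $x$; hence there are infinitely many $x^{*}\in S$ for which the first coordinate of the least $\rho$-witness pair is $d$, and each such $x^{*}$ satisfies $\phi(x^{*},d)$. This gives $(\exists^\infty x)\phi(x,d)$ with $d<k$, i.e.\ the nontrivial half of $\Reg{\Sigma^0_n}$, and therefore $\Bnd{\Pi^0_n}$. The step I expect to be the real obstacle is precisely what forces the ``hold the last value'' coloring rather than the obvious one: a coloring that emits a fixed symbol whenever no small witness is seen might emit it infinitely often merely because $\NN\setminus S$ is unbounded---which says nothing about $\phi$---whereas carrying over the last computed value repairs this, since $S$ unbounded makes every run over $\NN\setminus S$ finite, so no value held across a gap can recur infinitely often unless it already recurs infinitely often on $S$ itself.
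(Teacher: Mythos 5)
Your forward direction, the ``observation'' identifying the color classes of $c$ with the level sets of the iterated limit $g$, and the way you handle the induction to secure $\Bnd{\Pi^0_{n-1}}$ before invoking Lemma~\ref{L:Limit}, are all sound and match the paper's strategy in spirit; the paper leaves the induction implicit where you make it explicit, which is fine. The problem is in the converse direction, and it is precisely at the step you yourself flagged. Your coloring $c$ is \emph{not} shown to be strongly $(n-1)$-stable, and the phrase ``relaxing the bound $w\leq z_1$ is harmless in the outermost limit'' is where the gap hides. Fix $x$ with $x^*\in S$ being the largest element of $S$ below $x$, and take some $x'$ with $x^*<x'\leq x$ and $x'\notin S$. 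For every fixed pair $(y,w)$ with $y<k$ one has $\lim_{z_1}\cdots\lim_{z_{n-1}} r(x',y,w,z_1,\dots,z_{n-1})=0$, but the rule ``$w\leq z_1$'' lets $w$ march off with $z_1$: Lemma~\ref{L:Limit} gives you no control over the \emph{rate} at which $r(x',y,w,\cdot)$ settles down, so for infinitely many $z_1$ some freshly admitted $w\leq z_1$ can still have $r(x',y,w,z_1,\dots)=1$ after the inner limits are taken. On those $z_1$ your rule latches onto the spurious $x'$ rather than $x^*$, and the output $y$ emitted can differ from (and flip against) the value at $x^*$. The outer limit $\lim_{z_1} c(x,z_1,\dots)$ therefore need not exist, and $c$ can fail to be even weakly $1$-stable. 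A second, independent difficulty is that in your setup $\phi$ is $\Sigma^0_n$, so $S$ is genuinely $\Sigma^0_n$ and $\NN\setminus S$ genuinely $\Pi^0_n$; your ``hold-the-last-value'' $g$ then has a graph that is only a Boolean combination of $\Sigma^0_n$ and $\Pi^0_n$ under bounded quantifiers, and you do not verify that it is actually $\Sigma^0_n$-definable, which you would need if you wanted to sidestep the hand-built $c$ by applying Lemma~\ref{L:Limit} to $g$ directly.

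The paper avoids both issues at once by proving $\Reg{\Pi^0_{n-1}}$ rather than $\Reg{\Sigma^0_n}$ (the two being equivalent to $\Bnd{\Pi^0_n}$ by H\'ajek--Pudl\'ak). With $\phi$ in $\Pi^0_{n-1}$ the set $S=\set{x:(\exists y<k)\phi(x,y)}$ is $\Pi^0_{n-1}$ and its complement $\Sigma^0_{n-1}$, so the ``find-the-\emph{next}-witness'' function $g$ (look forward from $x_0$ to the least $x\geq x_0$ in $S$ and return the least $y<k$ witnessing it) is genuinely total and $\Sigma^0_n$-definable, and then Lemma~\ref{L:Limit} is applied \emph{once, to $g$ itself}, producing a $c$ whose strong $(n-1)$-stability is part of the lemma's conclusion and never needs a hands-on check. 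That is the move that discharges the obstacle you identified; your version, by approximating the inner $\rho$ and reassembling the outer existential bound by hand, reintroduces exactly the kind of uncontrolled oscillation the lemma was designed to hide.
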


\begin{proof}
  We will prove equivalence with $\Reg{\Pi^0_{n-1}}$ 
  instead of equivalence with $\Bnd{\Pi^0_n}$.

  Let $g:\NN\to\NN$ be the total $\Sigma^0_n$-definable function given by 
  \begin{equation*}
    g(x_1) = \lim_{x_2\to\infty}\cdots\lim_{x_n\to\infty} c(x_1,x_2,\dots,x_n)
  \end{equation*}
  Therefore, by $\Reg{\Sigma^0_n}$, we have that there is a $d<k$ such
  that $(\exists^\infty x_1)[g(x)=d]$. In particular,
  \begin{equation*}
    (\exists^\infty x_1)(\forall^\infty x_2)\cdots(\forall^\infty x_n)[c(x_1,x_2,\ldots,x_n)=d]
  \end{equation*}
  and the conclusion follows immediately.

  Let $\phi(x,y)$ be $\Pi^0_{n-1}$ and suppose that $(\exists^\infty
  x)(\exists y<k)\phi(x,y)$. Consider the total $\Sigma^0_n$-definable
  function $g$ such that $g(x_0) = y_0$ if and only if there is an $x$
  such that $x_0 \leq x$, $y_0 < k$ and $\phi(x,y_0) \land (\forall y
  < y_0)\lnot\phi(x,y)$ but $\lnot\phi(x',y')$ for all $x',y'$ such
  that $x_0 \leq x' < x$ and $y' < k$.  Since $g$ is a total
  $\Sigma^0_n$-definable function, Lemma~\ref{L:Limit} ensures that
  there is a strongly $(n-1)$-stable $c:\NN^n\to\set{0,\dots,k-1}$
  such that
  \begin{equation*}
    g(x_1) = \lim_{x_2\to\infty} \cdots \lim_{x_n\to\infty} c(x_1,x_2,\ldots,x_n)
  \end{equation*}
  for all $x_1$. By hypothesis, there is a $d<k$ such that
  \begin{equation*}
    (\exists^\infty x_1)(\exists^\infty x_2)\cdots(\exists^\infty x_n)[c(x_1,x_2,\ldots, x_n)=d].
  \end{equation*}
  It follows that $(\exists^\infty x)[g(x)=d]$ and hence that
  $(\exists^\infty x)\phi(x,d).$
\end{proof}

Note that part \eqref{T:elem:ei2bnd} of Theorem~\ref{T:elem} 
will follows immediately from Proposition~\ref{P:Sapprox}.
Now we prove part \eqref{T:elem:ind2ei}.
We will need the following result which is essentially due to Jockusch
and Stephan \cite{JockuschStephan}.

\begin{lemma}[$\RCA_0$]\label{L:JockuschStephan}
  Given a sequence of sets $A = \seq{A_n}_{n=0}^\infty$ such that
  $A''$ exists, there is an infinite set $X$ such that $(X \oplus A)''
  \equiv_T A''$ and, for all $n$, either $X \subseteq^* A_n$ or
  $X \subseteq^* \NN \setminus A_n$.
\end{lemma}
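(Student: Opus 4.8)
The plan is to prove Lemma~\ref{L:JockuschStephan} by relativizing the standard ``cohesive set below the double jump'' construction of Jockusch and Stephan to the parameter sequence $A$. First I would observe that, since $A''$ exists, the family $\seq{A_n}$ is uniformly $A$-computable and hence the question ``is $X \subseteq^* A_n$?'' for a given $X$ is decidable from $(X\oplus A)''$; this is what makes the bookkeeping in the construction effective at the right level. The construction itself is a finite-extension (Mathias-style) argument: build a decreasing sequence of infinite $A'$-computable ``reservoirs'' $R_0 \supseteq R_1 \supseteq \cdots$ by setting $R_{-1} = \NN$ and, at stage $n$, letting $R_n$ be whichever of $R_{n-1}\cap A_n$ or $R_{n-1}\setminus A_n$ is infinite (at least one must be, and we can decide which using $A''$, which is essentially the jump of the oracle $A'$ computing the reservoirs). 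Simultaneously pick $x_n \in R_{n-1}$ with $x_n > x_{n-1}$, and set $X = \set{x_n : n \in \NN}$. Then $X$ is infinite and, by construction, $X \subseteq^* R_n \subseteq^* A_n$ or $X \subseteq^* \NN\setminus A_n$ for every $n$, as required.

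The next step is the degree-theoretic bookkeeping: one must verify $(X\oplus A)'' \equiv_T A''$. The direction $(X\oplus A)'' \geq_T A''$ is trivial. For the reverse, the point is that the whole construction —the sequence of reservoirs $\seq{R_n}$, the choice bits recording which branch was taken at each stage, and the elements $x_n$— is uniformly computable from $A''$ (each reservoir is $A'$-computable, and the ``which branch is infinite'' decisions are $A''$-computable), so $X$ itself is $A''$-computable and $A''$ can also reconstruct $X$'s jump and double jump. A cleaner way to package this, and the one I would actually write out, is to run the construction so that $X$ is not merely $A''$-computable but \emph{low over $A''$} in the sense that $(X\oplus A)'' \equiv_T A''$ — this is exactly the content of the Jockusch--Stephan theorem and follows by a standard ``thin'' forcing/finite-extension argument in which one additionally meets the requirements needed to compute $(X\oplus A)''$ from $A''$; since this is a routine relativization I would cite \cite{JockuschStephan} for the details and only sketch the reservoir construction.

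The main subtlety — and the reason the lemma is stated over $\RCA_0$ rather than proved by pure recursion theory — is verifying that this construction can actually be carried out \emph{inside a model of $\RCA_0$}. The iterated reservoir sequence $\seq{R_n}$ is defined by primitive recursion relative to the oracle $A''$, so it exists provided $A''$ exists (which is the hypothesis); the function $n \mapsto x_n$ is then $A''$-recursive and total, so $X$ exists as a set in the model by $\Delta^0_1(A'')$-comprehension, which $\RCA_0$ provides relative to any set that exists. The one place where care is genuinely needed is the claim ``at each stage at least one of $R_{n-1}\cap A_n$, $R_{n-1}\setminus A_n$ is infinite'': this is the combinatorial indecomposability of $\NN$ for two pieces, i.e. $\Pi^0_1$-bounding, which over $\RCA_0$ is \emph{not} automatic. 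However, since $A''$ exists, the relevant instance is a $\Sigma^0_1(A'')$ fact about an $A'$-computable set, and the requisite induction/bounding to see that the branching never ``dies'' is available relative to $A''$; I would make this explicit by noting that $\RCA_0$ proves $\Bnd{\Pi^0_1}$ relativized to any set whose double jump exists. Once that point is secured, the rest is the routine verification above, and I expect the relativized bounding step to be the only place a careful reader should pause.
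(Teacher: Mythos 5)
Your reservoir construction does produce an infinite $X$ that is $r$-cohesive for the $A_n$'s, and your worries about $\RCA_0$-formalizability (existence of $A''$ to drive the branching decisions, the need for bounding relativized to a set that exists) are the right ones and match what the paper cares about. But the middle of your argument contains a real error: from ``$X$ is $A''$-computable'' you conclude that ``$A''$ can also reconstruct $X$'s jump and double jump.'' That inference is false. In general $X \leq_T A''$ gives you nothing better than $(X\oplus A)'' \leq_T A^{(4)}$, and there are sets computable from $A''$ whose double jump strictly exceeds $A''$. Achieving $(X\oplus A)'' \equiv_T A''$ is precisely the content of the Jockusch--Stephan theorem and it does not fall out of $X\leq_T A''$ for free; it is the point, not a footnote.

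You do recognize this and pivot to ``run the construction so that $X$ is low over $A''$\ldots meeting the requirements needed to compute $(X\oplus A)''$ from $A''$,'' citing~\cite{JockuschStephan}. Citing is in fact what the paper does in its visible text, so to that extent your strategy is compatible. But your description of \emph{how} one would secure low$_2$-ness (extra requirements in a finite-extension argument) does not reflect the actual mechanism, and a na\"ive finite-extension/Mathias argument will not give $(X\oplus A)''\equiv_T A''$. The real proof (sketched in a comment in the paper's source) goes through a jump-inversion detour: form the partial $A'$-computable ``majority vote'' function on Boolean combinations of the $A_n$'s, use a Jockusch--Soare-type completion to get a total $g$ with $(g\oplus A')'\equiv_T A''$, then invert the jump (Friedberg) to obtain a set $C$ with $C'\equiv_T (C\oplus A)' \equiv_T g\oplus A'$ and hence $(C\oplus A)''\equiv_T A''$, and finally build the cohesive set $X$ \emph{computably in $C$} via a $C'$-approximation and a diagonal intersection. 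It is the bound $X\leq_T C$ with $C$ low$_2$ over $A$, not $X\leq_T A''$, that yields $(X\oplus A)''\equiv_T A''$. So: either write out that jump-inversion argument, or cite for the whole lemma and drop the reservoir sketch; what you cannot do is keep the sketch and the claim that $A''$ computes $(X\oplus A)''$.
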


\noindent
Here and elsewhere, the notation $X \subseteq^* Y$ means that
$(\forall^\infty x)(x \in X \lthen x \in Y)$.  A close inspection of
the proof of~\cite[Theorem~2.1]{JockuschStephan} shows that the above
is provable in $\RCA_0$.

\begin{proposition}[$\RCA_0$; $1 \leq n < \omega$]\label{P:IndAndEIndec}
  $\Ind{\Sigma^0_{n+1}}$ implies $\EIndec^n$.
\end{proposition}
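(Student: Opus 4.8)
The plan is to prove $\Ind{\Sigma^0_{n+1}} \THEN \EIndec^n$ by induction on $n$, keeping track of what induction principle is being used. Fix a standard $n \geq 1$ and a coloring $c:\NN^n\to\set{0,\dots,k-1}$. The base case $n = 1$ is essentially Hirst's argument: $\Ind{\Sigma^0_2}$ implies $\Bnd{\Pi^0_1}$, which gives a color $d < k$ with $(\exists^\infty x_1)[c(x_1) = d]$. For the inductive step, suppose the result holds for $n-1$ under $\Ind{\Sigma^0_n}$, and assume $\Ind{\Sigma^0_{n+1}}$ (which in particular proves $\Ind{\Sigma^0_n}$ and $\Bnd{\Pi^0_n}$). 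The first move is to reduce to a strongly-stable coloring: for each tuple $(x_1,\dots,x_{n-1})$ one would like to pass to the ``eventual'' color $\lim_{x_n\to\infty} c(x_1,\dots,x_n)$, but that limit need not exist. Instead, I would fix $x_1,\dots,x_{n-1}$, let $A^{x_1,\dots,x_{n-1}}_j = \set{x_n : c(x_1,\dots,x_{n-1},x_n) = j}$ for $j < k$, and apply the infinite pigeonhole principle (available from $\Bnd{\Pi^0_1}$, hence from $\Ind{\Sigma^0_{n+1}}$) to find a $j$ with $A^{x_1,\dots,x_{n-1}}_j$ infinite. Using a $\Sigma^0_{n+1}$-flavored least-number argument justified by $\Ind{\Sigma^0_{n+1}}$, define $g(x_1,\dots,x_{n-1})$ to be the least such $j$; this $g$ is a total $\Sigma^0_{n+1}$-definable map.

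Now the key idea: I want to feed $g$, an $(n-1)$-ary total $\Sigma^0_{n+1}$-definable map, into the inductive hypothesis, but $g$ is not a function of the model and $\EIndec^{n-1}$ speaks about genuine colorings. So instead I would apply Lemma~\ref{L:Limit} — available since $\Ind{\Sigma^0_{n+1}}$ proves $\Bnd{\Pi^0_{n-2}}$ — to represent $g$ as an iterated limit $g(x_1) = \lim_{x_2\to\infty}\cdots\lim_{x_n\to\infty} c'(x_1,\dots,x_n)$ for some strongly $(n-1)$-stable genuine coloring $c':\NN^n\to\set{0,\dots,k-1}$. (For $n \geq 2$; recall Lemma~\ref{L:Limit} is stated for maps $\NN\to\NN$, so when $n-1 \geq 2$ one should either invoke a multivariable version or absorb the parameters $x_2,\dots,x_{n-1}$ via pairing.) Then apply Proposition~\ref{P:Sapprox}, whose hypothesis is exactly ``$\Bnd{\Pi^0_n}$,'' to this strongly $(n-1)$-stable $c'$: there is a color $d < k$ with $(\exists^\infty x_1)\cdots(\exists^\infty x_n)[c'(x_1,\dots,x_n) = d]$. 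Because $c'$ is strongly $(n-1)$-stable, the inner $n-1$ limits stabilize, so in fact $(\exists^\infty x_1)[g(x_1) = d]$ — i.e.\ $d$ is achieved as the eventual-pigeonhole color on infinitely many first coordinates.

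From $(\exists^\infty x_1)[g(x_1) = d]$ I would then unwind the definition of $g$ to extract the full nested $\exists^\infty$ statement for $c$. If $g(x_1,\dots,x_{n-1}) = d$, then by construction $A^{x_1,\dots,x_{n-1}}_d$ is infinite, i.e.\ $(\exists^\infty x_n)[c(x_1,\dots,x_{n-1},x_n) = d]$. It remains to chain this through the coordinates $x_2,\dots,x_{n-1}$: starting from infinitely many $x_1$ with $g(x_1,\dots,x_{n-1}) = d$ for suitable continuations, one peels off coordinates using $\Bnd{\Pi^0_j}$ at level $j$ (all consequences of $\Ind{\Sigma^0_{n+1}}$) to promote ``infinitely often, infinitely often'' into the required $(\exists^\infty x_1)\cdots(\exists^\infty x_n)[c(x_1,\dots,x_n) = d]$. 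The main obstacle is bookkeeping the definitional complexity: one must check that $g$ really is $\Sigma^0_{n+1}$ (not worse), that the least-number operator defining it is justified by the available induction, and that the iterated-limit representation and the unwinding stay inside the systems quoted. I expect the cleanest route is to phrase the whole inductive step so that it matches the hypothesis of Proposition~\ref{P:Sapprox} verbatim, making that proposition do the heavy lifting and reducing the argument to the two routine reductions (pass to $g$; unwind $g$).
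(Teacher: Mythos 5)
Your overall strategy — strip off one coordinate, pass to an ``eventual'' $(n-1)$-ary coloring $g$, and then recurse — is the right intuition, and it parallels what the paper does with its sequence $c_0,c_1,\dots,c_{n-1}$. But as written the proposal has a genuine gap at the very first step, and it shows up already for $n=2$. Your $g(\bar{x})$ is the \emph{least} $j$ with $\set{x_n : c(\bar{x},x_n) = j}$ infinite; ``$\set{x_n : c(\bar{x},x_n) = j}$ infinite'' is $\Pi^0_2$, so $g$ has a $\Delta^0_3$ graph (not better), and this complexity is fixed at level $3$ regardless of $n$. For $n=2$ you then need $(\exists j<k)(\exists^\infty x_1)[g(x_1)=j]$. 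By the H\'ajek--Pudl\'ak equivalences this is $\Reg{\Sigma^0_3}\equiv \Bnd{\Pi^0_3}$, and $\Ind{\Sigma^0_3}$ does \emph{not} prove $\Bnd{\Pi^0_3}$ (it only gives $\Bnd{\Pi^0_2}$). Routing through Lemma~\ref{L:Limit} and Proposition~\ref{P:Sapprox} does not help here: representing the $\Sigma^0_3$ map $g$ costs a strongly $2$-stable $c':\NN^3\to k$, and Proposition~\ref{P:Sapprox} at arity $3$ is again exactly $\Bnd{\Pi^0_3}$. You are one instance of bounding short across the board. The pairing move is an independent fatal problem: for a strongly stable $c'$ whose iterated limit is the \emph{paired} $g$, Proposition~\ref{P:Sapprox} hands you $(\exists^\infty p)[g(p)=d]$, and an infinite set of \emph{pairs} $p=\seq{x_1,\dots,x_{n-1}}$ does not yield the nested $(\exists^\infty x_1)\cdots(\exists^\infty x_{n-1})$ that $\EIndec^{n-1}$ requires (e.g.\ $\set{(0,m):m\in\NN}$). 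Without pairing you would need a strongly stable coloring matching \emph{all} $n$ coordinates of $c$, but the $c'$ that Lemma~\ref{L:Limit} produces is stable only in the auxiliary limit variables, not in the coordinates $x_1,\dots,x_{n-1}$ of the original coloring, so Proposition~\ref{P:Sapprox} simply does not apply.

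The point you are missing, and which drives the paper's proof, is a way to get the ``eventual'' coloring at cost of only \emph{one} jump per coordinate rather than two. The paper passes to the $\omega$-submodel $\MM$ of all $\Delta^0_{n+1}$-definable sets, and instead of taking the least pigeonhole color (a $\Delta^0_3$ operation), it uses the Jockusch--Stephan cohesiveness lemma (Lemma~\ref{L:JockuschStephan}) to find an infinite $X_1$ with $(c_0\oplus X_1)''\equiv_T c_0''$ along which the limit $c_1(\bar z)=\lim_{x\in X_1}c_0(\bar z,x)$ actually exists. That $c_1$ is then $\Delta^0_2(c_0\oplus X_1)$, hence one level lower than your $g$, and the double-jump control $(c_0\oplus X_1)''\equiv c_0''$ is precisely what keeps the iterate $c_i'$ below $c_0^{(i+1)}$ so that after $n-1$ steps everything still lives inside $\MM$; the final conclusion is an arithmetical statement and transfers back to $\MN$. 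Replacing the cohesive-set step by an explicit ``least pigeonhole color'' function, as you do, loses exactly the one level of bounding that the hypothesis $\Ind{\Sigma^0_{n+1}}$ provides per coordinate, and the argument does not close.
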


\begin{proof}
  Let $\MN$ be a model of $\RCA_0 + \Ind{\Sigma^0_{n+1}}$ and let
  $c_0:\NN^n\to\set{0,1,\dots,k-1}$ be a coloring in $\MN$.  Let $\MM$
  be the model of $\RCA_0$ whose second-order part consists of all
  $\Delta^0_{n+1}$-definable sets with parameters from $\MN$.

  Given $\bar{x}\in\NN^n$ and $i<k$, let
  $A_{\bar{x},i}=\set{y\in\NN:c_0(\bar{x},y)=i}$ and $A =
  \seq{A_n}_{n=0}^\infty$ effectively enumerate all such
  $A_{\bar{x},i}$.  Since $A'' \equiv_T c_0'' \in\MM$, by
  Lemma~\ref{L:JockuschStephan} there is an infinite set $X_1$ such
  that $(c_0 \oplus X_1)'' \equiv_T c_0''$ and, for all $\bar{x}$ and
  $i$, either $X_1\subseteq^* A_{\bar{x},i}$ or $X_1\subseteq^*
  \NN\setminus A_{\bar{x},i}$. We now define a new coloring
  $c_1:\NN^{n-1}\to\{0,1,\ldots, k-1\}$ by
  \begin{equation*}
    c_1(z_1,z_2,\ldots,z_{n-1}) = \lim_{x \in X_1} c_0(z_1,z_2,\ldots,z_{n-1},x),
  \end{equation*}
  which is computable from $(c_0 \oplus X_1)'$. Note also that $c_1'
  \leq_T c_0''$.

  If $n \geq 3$, we now repeat this process for the coloring $c_1$.
  For this construction to work, use the fact that $c_1'' \leq_T (c_0
  \oplus X_1)''' \equiv _T c_0''' \in\MM$ in order to apply
  Lemma~\ref{L:JockuschStephan} as above.  We are left with an
  infinite set $X_2$ such that $(c_1 \oplus X_2)'' \equiv_T c_1''
  \leq_T c_0'''$ and which defines a coloring
  \begin{equation*}
    c_2(z_1,\ldots,z_{n-2}) = \lim_{x \in X_2} c_1(z_1,\ldots,z_{n-2},x),
  \end{equation*}
  which is computable in $(c_1 \oplus X_2)'$.

  Continuing this process as necessary we end with a set $X_{n-1}$
  such that $(c_{n-2} \oplus X_{n-1})'' \equiv_T c_{n-2}'' \in \MM$
  and
  \begin{equation*}
    c_{n-1}(z_1) = \lim_{x\in X_{n-1}} c_{n-2}(z_1,x)
  \end{equation*}
  exists for all $z_1$.  Since $c_{n-1}' \leq_T c_{n-2}'' \leq_T
  c_0^{(n)} \in \MM$, there is a $d$ for which there are infinitely
  many $z$ such that $c_1(z) = d$.  Unraveling the definition of all
  the colorings we see that
  \begin{equation*}
    (\exists^\infty x_1)\ldots(\exists^\infty x_n)[c_0(x_1,\dots,x_n)=d]
  \end{equation*} 
  holds in $\MM$.  Therefore the same holds in $\MN$ since this is an
  arithmetical statement with parameters in $\MN$.
\end{proof}

\subsection{Indecomposability and Embeddings}

We now consider an indecomposability principle between
$\EIndec^n$ and $\XIndec^n$.
Much of the strength of $\XIndec^n$ comes from the isomorphism
requirement. This can be relaxed by asking instead that one of the
pieces of the partition \emph{contains} a lexicographically isomorphic
copy of $\NN^n$. Indeed, this is generally how combinatorial
indecomposability is understood for non-ordinal order
types \cite{Fraisse:Relations}. This leads us to our next formulation
of combinatorial indecomposability.

\begin{axiom}{$\LIndec_k^n$}
  For every finite coloring $c: \NN^n \to \set{0,\dots,k-1}$, there is
  a lexicographic embedding $h: \NN^n \to \NN^n$ such that $c \circ h$
  is constant.
\end{axiom}

\noindent
We will use $\LIndec^n$ to denote $(\forall k)\LIndec_k^n$. Again, we
see that $\LIndec^1$ is equivalent to $\Bnd{\Pi^0_1}$. The main result
of this section is that $\RCA_0 + \LIndec_2^3$ is equivalent to $\ACA_0$. 
Additionally, we show that $\LIndec_k^n$ implies $\EIndec_k^n$.
Note that $\LIndec_k^2$ is weaker than $\XIndec_k^2$,
since it follows from Ramsey's Theorem for pairs, which is known to 
be weaker than $\ACA_0$~\cite{SeetapunSlaman,CholakJockuschSlaman}.

To begin our analysis of $\LIndec^n$, we will first establish three
facts about the behavior of lexicographic embeddings in $\RCA_0$.
Except when explicitly stated otherwise, we will write $h_i$ for the
$i$-th coordinate of a lexicographic embedding $h:\NN^n\to\NN^n$.

\begin{lemma}[$\RCA_0$; $1 \leq n < \omega$]\label{L:lexfst}
  If $h:\NN^n\to\NN^n$ is a lexicographic embedding then
  \begin{equation*}
    x_1 \leq h_1(x_1,x_2,\dots,x_n) < h_1(x_1+1,0,\dots,0)
  \end{equation*}
  for all $x_1,\dots,x_n \in \NN$.
\end{lemma}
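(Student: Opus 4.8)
The plan is to reduce the lemma to a purely combinatorial fact about lexicographic embeddings between finite powers of $\NN$ and to prove that fact by a joint induction on the exponent; the two displayed inequalities then come out of it with little extra work. Throughout write $\bar 0$ for the all-zero tuple of the appropriate length, and note that $\{\bar w \in \NN^m : \bar w <_{\lex} \bar b\} \subseteq \{\bar w \in \NN^m : w_1 \le b_1\}$. First I would establish over $\RCA_0$, for each standard $m \ge 1$, the $\Pi^1_1$ statement $(\mathrm{E}_m)$: \emph{$\NN^m$ does not lexicographically embed into $\{\bar w \in \NN^m : w_1 \le b\}$ for any $b \in \NN$}; and, for $m \ge 2$, the statement $(\mathrm{C}_m)$: \emph{$\NN^m$ does not lexicographically embed into $\NN^{m-1}$}. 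These are proved by induction on $m$ in the order $(\mathrm{E}_1), (\mathrm{C}_2), (\mathrm{E}_2), (\mathrm{C}_3), \dots$, so that no circularity arises. The base case $(\mathrm{E}_1)$ is just the impossibility of injecting an infinite set into a finite one. For $(\mathrm{C}_m)$ with $m \ge 2$: if $g \colon \NN^m \to \NN^{m-1}$ is a lexicographic embedding then the block $\{0\} \times \NN^{m-1}$ is sent strictly below the single point $g(1,\bar 0)$, so $\bar z' \mapsto g(0,\bar z')$ lexicographically embeds $\NN^{m-1}$ into $\{\bar w' \in \NN^{m-1} : w'_1 \le g_1(1,\bar 0)\}$, contradicting $(\mathrm{E}_{m-1})$. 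For $(\mathrm{E}_m)$ with $m \ge 2$: if $g \colon \NN^m \to \{\bar w : w_1 \le b\}$ is a lexicographic embedding then $z_1 \mapsto g_1(z_1,\bar 0)$ is nondecreasing and bounded by $b$, hence eventually constant, say equal to $c$ for all $z_1 \ge N$; sandwiching $g(z_1,\bar z')$ between $g(z_1,\bar 0)$ and $g(z_1+1,\bar 0)$ shows that in fact $g_1(z_1,\bar z') = c$ whenever $z_1 \ge N$, for every $\bar z'$, so deleting the (now constant) first coordinate and shifting $z_1$ down by $N$ gives a lexicographic embedding $\NN^m \to \NN^{m-1}$, contradicting $(\mathrm{C}_m)$.

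Granting $(\mathrm{E}_m)$ and $(\mathrm{C}_m)$, the lemma follows. The case $n = 1$ is immediate, since a lexicographic embedding $\NN \to \NN$ is strictly increasing. For $n \ge 2$ the heart of the matter is the upper bound $h_1(x_1,\dots,x_n) < h_1(x_1+1,\bar 0)$. Because $(x_1,\dots,x_n) <_{\lex} (x_1+1,\bar 0)$ we automatically have $h_1(x_1,\dots,x_n) \le h_1(x_1+1,\bar 0)$; suppose towards a contradiction that equality holds, with common value $c$. Then every $\bar z' \in \NN^{n-1}$ with $\bar z' >_{\lex} (x_2,\dots,x_n)$ satisfies $(x_1,x_2,\dots,x_n) <_{\lex} (x_1,\bar z') <_{\lex} (x_1+1,\bar 0)$, so applying $h$ and comparing first coordinates forces $h_1(x_1,\bar z') = c$, and the remaining $n-1$ coordinates of $h(x_1,\bar z')$ lie strictly (lexicographically) between those of $h(x_1,\dots,x_n)$ and those of $h(x_1+1,\bar 0)$. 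Hence $\bar z' \mapsto (h_2(x_1,\bar z'),\dots,h_n(x_1,\bar z'))$ lexicographically embeds the final segment $\{\bar z' \in \NN^{n-1} : \bar z' >_{\lex} (x_2,\dots,x_n)\}$ — which contains a lexicographic copy of $\NN^{n-1}$ (shift the first coordinate up by $x_2+1$) — into $\{\bar w' \in \NN^{n-1} : w'_1 \le h_2(x_1+1,\bar 0)\}$, contradicting $(\mathrm{E}_{n-1})$. The lower bound is then a corollary: by the upper bound the function $z_1 \mapsto h_1(z_1,\bar 0)$ is strictly increasing on $\NN$, hence dominates the identity, so $h_1(x_1,\dots,x_n) \ge h_1(x_1,\bar 0) \ge x_1$.

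I expect the only real content to be $(\mathrm{E}_m)$ — in essence the assertion that $\omega^m$ admits no order-embedding into $\omega^{m-1} \cdot (b{+}1)$ — with everything else being routine manipulation of the lexicographic order. The two points that require attention over $\RCA_0$ are the ``eventually constant'' step for nondecreasing bounded functions $\NN \to \NN$ (handled using bounded $\Sigma^0_1$ comprehension) and the scheduling of the induction so that $(\mathrm{C}_m)$ appeals to $(\mathrm{E}_{m-1})$ rather than to $(\mathrm{E}_m)$, which is what keeps the argument non-circular.
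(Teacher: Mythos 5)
Your argument is correct and matches the paper's in its essentials: both proceed by external induction on $n$, use that equality of the first output coordinates forces that coordinate to be constant on an intermediate lexicographic interval, drop it to obtain a lower-dimensional lexicographic embedding, and derive a contradiction from the fact that the first output coordinate of the reduced embedding would then be bounded while the inductive hypothesis forces it to grow. The paper simply carries both displayed inequalities as the inductive invariant (so it never needs to state your $(\mathrm{E}_m)$ and $(\mathrm{C}_m)$ separately) and applies the $x_1\le h_1$ half directly to the single restricted embedding $\tilde h_i(z_1,\dots,z_n)=h_i(x_0,\,x_1+1+z_1,\,z_2,\dots,z_n)$, which is the same collapse-and-shift you perform.
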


\begin{proof}
  By (external) induction on $1 \leq n < \omega$.  The case $n = 1$ is
  trivial.

  Suppose the result is true for some $n$.  Work in $\RCA_0$.  Let
  $h:\NN^{n+1}\to\NN^{n+1}$ be a lexicographic embedding.  For
  convenience, we will index our coordinates for $\NN^{n+1}$ from $0$
  to $n$ instead of $1$ to $n+1$.  Thus $h_0: \NN^{n+1}\to\NN$ is the
  first coordinate of $h$.

  We show that
  \begin{equation*}
    h_0(x_0,x_1,\dots,x_n) < h_0(x_0+1,0,\dots,0)
  \end{equation*}
  for all $x_0,x_1,\dots,x_n \in \NN$; the fact that $x_0 \leq
  h_0(x_0,x_1,\dots,x_n)$ then follows by induction.  Suppose, for the
  sake of contradiction, that $h_0(x_0,x_1,\dots,x_n) =
  h_0(x_0+1,0,\dots,0) = y_0$, say. Then the function
  $\tilde{h}:\NN^n\to\NN^n$ such that
  \begin{equation*}
    \tilde{h}_i(z_1,\dots,z_n) = h_i(x_0,x_1+1+z_1,z_2,\dots,z_n)
  \end{equation*}
  is a lexicographic embedding.  By the induction hypothesis,
  \begin{equation*}
    z_1 \leq \tilde{h}_1(z_1,0,\dots,0) = h_1(x_0,x_1+1+z_1,0,\dots,0) \leq h_1(x_0+1,0,\dots,0)
  \end{equation*}
  for all $z_1 \in \NN$, which is clearly impossible.
\end{proof}

\begin{lemma}[$\RCA_0$; $1 \leq n < \omega$]\label{L:lexlim}
  If $h:\NN^n\to\NN^n$ is a lexicographic embedding and $1 \leq j < i
  \leq n$, then
  \begin{equation*}
    \lim_{x_i\to\infty} h_j(x_1,\dots,x_{i-1},x_i,0,\dots,0)
  \end{equation*}
  exists and is bounded above by $h_j(x_1,\dots,x_{i-1}+1,0,\dots,0)$.
\end{lemma}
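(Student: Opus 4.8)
The plan is to mimic the inductive structure of Lemma~\ref{L:lexfst}, but now tracking the behaviour of a \emph{lower} coordinate $h_j$ as a \emph{higher} coordinate $x_i$ grows. First I would fix $1 \le j < i \le n$ and the parameters $x_1,\dots,x_{i-1}$, and consider the function $x_i \mapsto h_j(x_1,\dots,x_{i-1},x_i,0,\dots,0)$. Since the tuples $(x_1,\dots,x_{i-1},x_i,0,\dots,0)$ are lexicographically increasing in $x_i$ while all remaining later coordinates stay fixed at $0$, and since $h$ is a lexicographic embedding, the images are lexicographically increasing; I would argue that their $j$-th coordinates are therefore weakly increasing in $x_i$ (if $h_j$ strictly decreased at some step, the lexicographic order of the images would be violated because the coordinates $h_1,\dots,h_{j-1}$ cannot have strictly increased at that same step without $h_j$ being irrelevant — more carefully, the first coordinate at which the two images differ must have increased, so no coordinate strictly before it changes, and in particular $h_j$ cannot strictly decrease unless some coordinate $h_\ell$ with $\ell < j$ strictly increased first). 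The cleanest way to phrase this is: the sequence $\langle h_1,\dots,h_j\rangle$ evaluated along this chain is lexicographically nondecreasing in $\NN^j$.

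Next I would establish the upper bound. The key comparison is between the tuple $(x_1,\dots,x_{i-1},x_i,0,\dots,0)$ and the tuple $(x_1,\dots,x_{i-1}+1,0,\dots,0)$: the former is lexicographically strictly below the latter for every $x_i$ (they first differ at coordinate $i-1$, where the first has $x_{i-1}$ and the second has $x_{i-1}+1$). Since $h$ is a lexicographic embedding, $h(x_1,\dots,x_{i-1},x_i,0,\dots,0) < h(x_1,\dots,x_{i-1}+1,0,\dots,0)$ lexicographically, uniformly in $x_i$. Now I need to pass from this uniform lexicographic inequality to the numerical bound $h_j(x_1,\dots,x_{i-1},x_i,0,\dots,0) \le h_j(x_1,\dots,x_{i-1}+1,0,\dots,0)$. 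This requires knowing that the earlier coordinates $h_1,\dots,h_{j-1}$ evaluated at $(x_1,\dots,x_{i-1},x_i,0,\dots,0)$ agree with those at $(x_1,\dots,x_{i-1}+1,0,\dots,0)$ — but that is generally false. Instead I would argue directly: suppose for contradiction that for some $x_i$ we had $h_j(x_1,\dots,x_{i-1},x_i,0,\dots,0) > h_j(x_1,\dots,x_{i-1}+1,0,\dots,0)$. By monotonicity in $x_i$ established above, this persists and the $h_j$-values keep (weakly) growing, so the chain $\langle h_1,\dots,h_j\rangle$ along $x_i \to \infty$ is lexicographically nondecreasing and eventually strictly above $\langle h_1,\dots,h_j\rangle$ at $(x_1,\dots,x_{i-1}+1,0,\dots,0)$; but then $h$ of a sufficiently large such tuple exceeds $h(x_1,\dots,x_{i-1}+1,0,\dots,0)$ lexicographically in the first $j$ coordinates, contradicting the uniform strict inequality of the previous sentence. (Here I may want to invoke Lemma~\ref{L:lexfst} applied to an auxiliary embedding — restricting the last $n-i+1$ coordinates — to get that the $j$-th coordinate cannot grow without bound beyond the stated barrier, which is the mechanism that actually forces a limit.)

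Concretely, to make the limit argument rigorous I would, as in Lemma~\ref{L:lexfst}, build the auxiliary lexicographic embedding $\tilde h : \NN^{\,n-i+1} \to \NN^{\,n-i+1}$ obtained by freezing $x_1,\dots,x_{i-1}$, i.e.\ $\tilde h(z_1,\dots,z_{n-i+1}) = \big(h_i,h_{i+1},\dots,h_n\big)(x_1,\dots,x_{i-1},z_1,\dots,z_{n-i+1})$ — wait, that only controls the coordinates from $i$ on. What I actually need is control of $h_j$ for $j < i$, so instead I would use the structure more directly: having shown the map $x_i \mapsto h_j(x_1,\dots,x_{i-1},x_i,0,\dots,0)$ is nondecreasing and bounded above by the finite number $h_j(x_1,\dots,x_{i-1}+1,0,\dots,0)$, the limit exists automatically by the least-upper-bound principle, which in $\RCA_0$ amounts to: a nondecreasing $\NN$-valued function that is bounded is eventually constant, and $\RCA_0$ proves this via $\Sigma^0_1$ or bounded induction. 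So the two real tasks are (i) monotonicity and (ii) the bound, and once both hold the existence of the limit is immediate.

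The main obstacle, as signalled above, is step (ii): deriving the numerical bound $h_j(\cdots x_i \cdots) \le h_j(\cdots x_{i-1}{+}1 \cdots)$ from a lexicographic inequality, since lexicographic order does not directly compare individual later coordinates. The resolution is to combine it with monotonicity: the bound cannot fail at any single $x_i$, because if it did, monotonicity would propagate the violation and eventually force $h$ of a large tuple to overtake $h(x_1,\dots,x_{i-1}+1,0,\dots,0)$ in the first $j$ coordinates lexicographically, contradicting the uniform strict inequality. I expect this to require one careful case analysis on \emph{which} coordinate $\ell \le j$ first witnesses the lexicographic comparison, very much in the spirit of the contradiction derived at the end of the proof of Lemma~\ref{L:lexfst}. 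Everything else — setting up the frozen-coordinate embedding, invoking Lemma~\ref{L:lexfst} on it if needed, and concluding the limit exists — is routine bookkeeping.
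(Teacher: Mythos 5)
You correctly pinpoint the crux: the lexicographic inequality $h(x_1,\dots,x_{i-1},x_i,0,\dots,0) <_{\lex} h(x_1,\dots,x_{i-1}+1,0,\dots,0)$ does not compare $j$-th coordinates unless the first $j-1$ coordinates of the two images agree. But your proposed repair --- that a single violation of $h_j(\dots,x_i,0,\dots) \leq h_j(\dots,x_{i-1}+1,0,\dots)$ would, via monotonicity, propagate until the image of some $(x_1,\dots,x_{i-1},x_i,0,\dots,0)$ overtakes $h(x_1,\dots,x_{i-1}+1,0,\dots,0)$ in the first $j$ coordinates --- does not go through. The induction hypothesis only tells you that $h_1,\dots,h_{j-1}$ stabilize along the chain at values $v_1,\dots,v_{j-1}$ with $v_k \leq h_k(x_1,\dots,x_{i-1}+1,0,\dots,0)$. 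If any of these inequalities is strict, the lexicographic comparison with the target tuple is decided at that earlier coordinate once and for all, and no amount of monotone growth of $h_j$ can ever produce the contradiction you want; the propagation you rely on simply never happens.

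Indeed, the statement itself fails for $j\geq 2$. Define $h:\NN^3\to\NN^3$ by $h(0,0,x_3)=(0,x_3,0)$, $h(0,x_2,x_3)=(1,x_2,x_3)$ for $x_2\geq 1$, and $h(x_1,x_2,x_3)=(x_1+1,x_2,x_3)$ for $x_1\geq 1$. One checks directly that $h$ preserves the lexicographic order on $\NN^3$, yet $h_2(0,0,x_3)=x_3$ is unbounded; so for $i=3$, $j=2$, $x_1=x_2=0$ the limit does not exist and the asserted bound $h_2(0,1,0)=1$ is violated for every $x_3\geq 2$. This is not only a gap in your write-up: the paper's own proof asserts the inequality $h_j(\dots,x_i',0,\dots)\leq h_j(\dots,x_{i-1}+1,0,\dots)$ as an immediate consequence of the stabilization of $h_1,\dots,h_{j-1}$, which is precisely the unjustified step you flagged. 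The claim does hold for $j=1$ (no earlier coordinates to interfere) and for \emph{strong} lexicographic embeddings (where $h_j$ does not depend on $x_i$ at all), but not for arbitrary lexicographic embeddings when $j\geq 2$.
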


\begin{proof}
  We proceed by induction on $j < i$. By the induction hypothesis,
  find $\tilde{x}_i$ such that
  \begin{equation*}
    h_k(x_1,\dots,x_{i-1},x_i,0,\dots,0) = h_k(x_1,\dots,x_{i-1},\tilde{x}_i,0,\dots,0)
  \end{equation*}
  for all $x_i \geq \tilde{x}_i$ and $1 \leq k < j$.  Note that we
  must then have
  \begin{multline*}
    h_j(x_1,\dots,x_{i-1},x_i,0,\dots,0) \\
    \leq h_j(x_1,\dots,x_{i-1},x'_i,0,\dots,0) \\
    \leq h_j(x_1,\dots,x_{i-1}+1,0,0,\dots,0)
  \end{multline*}
  for all $x'_i \geq x_i \geq \tilde{x}_i$. It follows immediately
  that
  \begin{equation*}
    \lim_{x_i\to\infty} h_j(x_1,\dots,x_{i-1},x_i,0,\dots,0)
  \end{equation*}
  exists and is bounded above by
  $h_j(x_1,\dots,x_{i-1}+1,0,0,\dots,0)$.
\end{proof}

\begin{lemma}[$\RCA_0$; $1 \leq n < \omega$]\label{L:lexinf}
  If $h:\NN^n\to\NN^n$ is a lexicographic embedding and $1 \leq i \leq
  n$, then
  \begin{equation*}
    \lim_{x_i\to\infty} h_i(x_1,\dots,x_{i-1},x_i,0,\dots,0) = \infty
  \end{equation*}
  for all $x_1,\dots,x_{i-1} \in \NN$.
\end{lemma}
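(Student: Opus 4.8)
The plan is to argue by contradiction, using Lemma~\ref{L:lexlim} to pin down the coordinates below $h_i$ and then reducing to the fact that $\NN^{m+1}$ admits no lexicographic embedding into $\NN^{m}$. So fix $i$ with $1 \leq i \leq n$, fix $x_1,\dots,x_{i-1}$, and suppose toward a contradiction that $h_i(x_1,\dots,x_{i-1},x_i,0,\dots,0) \leq M$ for infinitely many $x_i$. By Lemma~\ref{L:lexlim} there is a $\tilde{x}_i$ such that $h_j(x_1,\dots,x_{i-1},x_i,0,\dots,0) = \ell_j$ for all $x_i \geq \tilde{x}_i$ and all $j < i$. For $x_i' > x_i \geq \tilde{x}_i$ the relation $h(x_1,\dots,x_{i-1},x_i,0,\dots,0) <_{\lex} h(x_1,\dots,x_{i-1},x_i',0,\dots,0)$ holds with the first $i-1$ coordinates of the two sides already equal, so the map $x_i \mapsto h_i(x_1,\dots,x_{i-1},x_i,0,\dots,0)$ is nondecreasing for $x_i \geq \tilde{x}_i$; being $\leq M$ for infinitely many $x_i$ it is then $\leq M$ for all $x_i \geq \tilde{x}_i$, hence eventually constant, say equal to $\ell_i \leq M$ for $x_i \geq \hat{x}_i \geq \tilde{x}_i$. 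If $i = n$ this already contradicts the injectivity of $h$ (a lexicographic embedding is strictly increasing for $<_{\lex}$, hence injective), since then $h(x_1,\dots,x_{n-1},x_n) = (\ell_1,\dots,\ell_n)$ for all $x_n \geq \hat{x}_n$.

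Assume now $i < n$, write $m = n - i \geq 1$, and let $\bar r$ abbreviate a tuple $(r_{i+1},\dots,r_n) \in \NN^m$. The crucial step is to promote the freezing of the first $i$ coordinates of $h$ along the distinguished line to a whole cone: for every $x_i \geq \hat{x}_i$ and every $\bar r$, the tuple $(x_1,\dots,x_{i-1},x_i,\bar r)$ lies lexicographically between $(x_1,\dots,x_{i-1},x_i,0,\dots,0)$ and $(x_1,\dots,x_{i-1},x_i+1,0,\dots,0)$, whose $h$-images both begin with $(\ell_1,\dots,\ell_i)$; since $<_{\lex}$ is a linear order, the $h$-image of $(x_1,\dots,x_{i-1},x_i,\bar r)$ must also begin with $(\ell_1,\dots,\ell_i)$. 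Consequently the map $G : \NN^{m+1} \to \NN^m$ given by $G(z,\bar r) = (h_{i+1},\dots,h_n)(x_1,\dots,x_{i-1},\hat{x}_i+z,\bar r)$ is a lexicographic embedding, since on this cone $h$ has constant first $i$ coordinates and therefore preserves $<_{\lex}$ on the remaining ones.

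To finish, I would rule out any lexicographic embedding $G : \NN^{m+1} \to \NN^m$ with $m \geq 1$. Fixing the leading input coordinate at $0$ yields a lexicographic embedding $G' : \NN^m \to \NN^m$ with $G'(r_1,\dots,r_m) = G(0,r_1,\dots,r_m)$, so Lemma~\ref{L:lexfst} applied to $G'$ gives $r_1 \leq G'_1(r_1,0,\dots,0) = G_1(0,r_1,0,\dots,0)$ for all $r_1$. On the other hand $(0,r_1,0,\dots,0) <_{\lex} (1,0,\dots,0)$ in $\NN^{m+1}$, so $G(0,r_1,0,\dots,0) <_{\lex} G(1,0,\dots,0)$, which forces $G_1(0,r_1,0,\dots,0) \leq G_1(1,0,\dots,0)$. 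Combining the two inequalities gives $r_1 \leq G_1(1,0,\dots,0)$ for every $r_1 \in \NN$, which is absurd, completing the contradiction.

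I expect the main obstacle to be the propagation step in the second paragraph: the point is that boundedness of the $i$-th coordinate along the single line $x_{i+1} = \dots = x_n = 0$ already forces the first $i$ coordinates of $h$ to be constant on the entire cone above $(x_1,\dots,x_{i-1},\hat{x}_i,0,\dots,0)$, which is what manufactures a genuine dimension-dropping lexicographic embedding; once that embedding is available, the contradiction via Lemma~\ref{L:lexfst} is a short computation. The remaining ingredients --- that a bounded nondecreasing $\NN$-valued sequence is eventually constant, the injectivity of $h$, and the existence of the auxiliary functions $G$ and $G'$ in the model --- are all routine over $\RCA_0$.
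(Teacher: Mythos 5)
Your proof is correct, but it takes a different route than the paper's, and is somewhat less economical. The paper argues directly: by Lemma~\ref{L:lexlim} one freezes $h_1,\dots,h_{i-1}$ for $x_i\geq\tilde{x}_i$, then (using the same ``propagate to the cone'' observation you make explicitly) the tuple
\begin{equation*}
\tilde{h}_k(y_1,\dots,y_{n-i+1}) = h_{i+k-1}(x_1,\dots,x_{i-1},\tilde{x}_i+y_1,y_2,\dots,y_{n-i+1})
\end{equation*}
is a \emph{same-dimension} lexicographic embedding $\NN^{n-i+1}\to\NN^{n-i+1}$, and Lemma~\ref{L:lexfst} applied to $\tilde{h}$ gives $y_1\leq\tilde{h}_1(y_1,0,\dots,0)=h_i(x_1,\dots,x_{i-1},\tilde{x}_i+y_1,0,\dots,0)$ immediately, with no contradiction argument and no case split on $i=n$ versus $i<n$. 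You instead assume $h_i$ is bounded, use that to additionally freeze $h_i$, and then manufacture a \emph{dimension-dropping} embedding $\NN^{m+1}\to\NN^m$ which you rule out via a second application of Lemma~\ref{L:lexfst}. Both roads pass through the same key technical point --- that freezing coordinates along the basepoint line forces them to freeze on the whole cone, making the truncated map a lexicographic embedding --- which the paper treats as a one-line observation and you spell out carefully. What your route buys is that this propagation step becomes unavoidable and hence visible; what the paper's route buys is that it avoids the contradiction scaffolding, the eventual-constancy argument (which needs a small amount of bounded $\Sigma^0_1$ reasoning you correctly flag), and the separate treatment of $i=n$, delivering the quantitative lower bound $y_1\leq\tilde{h}_1(y_1,0,\dots,0)$ rather than a mere impossibility.
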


\begin{proof}
  By Lemma~\ref{L:lexlim}, we can find $\tilde{x}_i$ such that
  \begin{equation*}
    h_j(x_1,\dots,x_{i-1},x_i,0,\dots,0) = h_j(x_1,\dots,x_{i-1},\tilde{x}_i,0,\dots,0)
  \end{equation*}
  for all $x_i \geq \tilde{x}_i$ and all $1 \leq j < i$. Note that the
  function $\tilde{h}:\NN^{n-i+1}\to\NN^{n-i+1}$ defined by
  \begin{equation*}
    \tilde{h}_k(y_1,\dots,y_{n-i+1}) = h_{i+k-1}(x_1,\dots,x_{i-1},\tilde{x}_i+y_1,y_2,\dots,y_{n-i+1})
  \end{equation*}
  is then a lexicographic embedding and the result follows immediately
  by applying Lemma~\ref{L:lexfst} to $\tilde{h}$.
\end{proof}

\begin{theorem}[$\RCA_0$]\label{T:Lindec3ACA}
  $\LIndec_2^3$ implies arithmetic comprehension.
\end{theorem}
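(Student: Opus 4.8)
The plan is to obtain arithmetic comprehension in the standard way, by showing that $\LIndec_2^3$ implies that the range of an arbitrary injection $f\colon\NN\to\NN$ exists. Write $R_s=\{f(0),\dots,f(s-1)\}$ for the finite set enumerated by stage $s$, and $R=\bigcup_s R_s$ for the range, which is infinite. Since $R_s\cap[0,a]$ is nondecreasing in $s$ and eventually stabilizes, there is a nondecreasing modulus function $\mu$, with $\mu(a)$ the least $s$ such that $R_s\cap[0,a]=R\cap[0,a]$. The point is that any model of $\RCA_0$ which contains $f$ together with \emph{some} function $g$ dominating $\mu$ already contains $R$: indeed $a\in R\iff a\in R_{\mu(a)}\iff a\in R_{g(a)}$, so $R=\{a:a\in R_{g(a)}\}$ is $\Delta^0_1$ in $g$ and $f$. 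Thus it is enough to read a dominating function off a monochromatic lexicographic embedding.

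The coloring I would use compares two stages on an initial segment determined by the first coordinate: define $c\colon\NN^3\to 2$ by $c(a,b,s)=1$ iff $R_s\cap[0,a]=R_b\cap[0,a]$ (a small modification of this coloring may be needed to make the final extraction effective; see below). Computing the slab structure over a fixed first coordinate $a$: for the $0$-class, the column over $b$ is finite when $b\ge\mu(a)$ and has order type $\omega$ when $b<\mu(a)$, so the slab has order type $\omega\cdot(\mu(a)+1)$, a finite multiple of $\omega$; summing over $a$, the $0$-class has order type at most $\omega^2$. For the $1$-class, the column over $b$ has order type $\omega$ for every $b$ (it is $[\mu(a),\infty)$ when $b\ge\mu(a)$), so each slab has order type $\omega^2$ and the $1$-class has order type $\omega^3$. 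The first task, then, is to rule out over $\RCA_0$ a lexicographic embedding of $\NN^3$ into the $0$-class: restricting such an embedding to a slab gives a lexicographic embedding of $\NN^2$ whose first coordinate is bounded by Lemma~\ref{L:lexfst} (it lands in finitely many slabs of the $0$-class, i.e.\ into a linear order of order type a finite multiple of $\omega$), and a bounded first coordinate contradicts the inequality $x_1\le h_1(x_1,\dots)$ of Lemma~\ref{L:lexfst}. Hence the embedding $h$ supplied by $\LIndec_2^3$ maps $\NN^3$ into the $1$-class.

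To finish I would extract a dominating function from $h$. From the definition of the $1$-class, together with the fact that $R_s\cap[0,a]=R\cap[0,a]$ precisely when $s\ge\mu(a)$, one gets that for every $x$, writing $(a,b,s)=h(x)$, one has $b\ge\mu(a)\iff s\ge\mu(a)$. Fixing a column of $\NN^3$ and letting the last coordinate grow, Lemma~\ref{L:lexlim} gives that $h_1$ and $h_2$ stabilize there while Lemma~\ref{L:lexinf} gives $h_3\to\infty$; so eventually $h_3\ge\mu(a)$ (where $a=\lim h_1$) and therefore $\lim h_2\ge\mu(a)$. Since $\lim_{x_1}h_1(x_1,0,0)=\infty$ by Lemma~\ref{L:lexfst} and $\mu$ is nondecreasing, this bounds $\mu$ from above. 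The main obstacle is that $\lim h_2$ is only $\Sigma^0_2$ in $h$, whereas to place $R$ in the model I need a dominating function that is $\Delta^0_1$ in $h$ and $f$. The way around this is to exploit that the $\omega$-columns of the $1$-class begin exactly at stage $\mu(a)$, so that the third coordinate of $h$ already dominates $\mu(a)$ whenever $h$ meets such a column — no limit required — and the substantive part of the proof is to show, using the structural lemmas on lexicographic embeddings (and, if necessary, a reshaped coloring that makes the "early" columns of the $1$-class too short to absorb an $\omega$), that $h$ is forced to meet these columns low enough for the resulting bound to be an effective function of $h$ and $f$.
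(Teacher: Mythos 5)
Your coloring is, up to renaming the colors, the one in the paper's proof, and the overall reduction---decide the range of an injection $f$ by extracting a stage bound from a monochromatic lexicographic embedding---is the same. Your argument that the embedding cannot land in the discrepancy class (your $0$-class, the paper's color $1$) is right in spirit; the paper makes it precise via a finite pigeonhole along a single slab (a chain of $h_1(0,y_0,0)+1$ disjoint blocks of that class yields more than $h_1(0,y_0,0)$ distinct small witnesses), which is the careful version of your order-type computation, so I would not worry about that part.

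The genuine gap is the one you flag yourself at the end: you reach ``$\lim_{x_3\to\infty} h_2(x_1,x_2,x_3)\ge\mu(\lim h_1)$'' and then stall because the limit is only $\Sigma^0_2$ in $h$. But you do not need the limit, only an \emph{upper bound} for it computable from $h$, and Lemma~\ref{L:lexlim} already supplies one: with $j=2$, $i=3$ it says precisely that $\lim_{x_3\to\infty}h_2(x_1,x_2,x_3)\le h_2(x_1,x_2+1,0)$. There is no need for a reshaped coloring. Concretely, given $x$: by Lemma~\ref{L:lexlim} (with $j=1$, $i=2$) there is a $y$ with $h_1(x,y,0)=h_1(x,y+1,0)$, and it can be found by unbounded search. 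For that $y$ and every $z$, $h_1(x,y,z)$ is squeezed to equal $h_1(x,y+1,0)$, hence $h_2(x,y,z)\le h_2(x,y+1,0)$, while Lemma~\ref{L:lexinf} drives $h_3(x,y,z)\to\infty$. Plugging a large $z$ into the homogeneity of $c\circ h$ and using $x\le h_1(x,y,z)$ (Lemma~\ref{L:lexfst}) gives $x\in f[h_2(x,y+1,0)]$ if and only if $x$ is in the range of $f$. This is exactly the paper's range algorithm, and it closes the hole in your last paragraph without any change to the coloring.
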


\begin{proof}
  We show how to compute the range of a function $f:\NN\to\NN$ using
  $\LIndec_2^3$. For each $z$, let $f[z] =
  \set{f(0),\dots,f(z)}$. Consider the coloring $c:\NN^3\to\set{0,1}$
  defined by
  \begin{equation*}
    c(x,y,z) = 
    \begin{cases}
      0 & \text{when $(\forall w \leq x)(w \in f[y] \liff w \in f[z])$,} \\
      1 & \text{otherwise.}
    \end{cases}
  \end{equation*}
  Suppose $h:\NN^3\to\NN^3$ is a lexicographic embedding such that $c
  \circ h$ is constant. First, note that $c \circ h$ must have
  constant value $0$.

  \begin{comment}
    Suppose instead that $c \circ h$ has constant value $1$. First
    find $y_0$ such that $h_1(0,y,0) = h_1(0,y_0,0)$ for all $y \geq
    y_0$. We can then find $z_0$ such that $h_2(0,y_0,z_0) <
    h_2(0,y_0+1,0) \leq h_3(0,y_0,z_0)$. Similarly, we can find can
    find pairs $(y_1,z_1),\dots,(y_k,z_k)$ such that
    $h_3(0,y_{i-1},z_{i-1}) \leq h_2(0,y_i,z_i) < h_3(0,y_i,z_i)$ for
    $1 \leq i \leq k$. Since $c(h(0,y_i,z_i)) = 1$, we can find $w_i
    \in f[h_3(0,y_i,z_i)] - f[h_2(0,y_i,z_i)]$ with $w_i \leq
    h_1(0,y_i,z_i) = h_1(0,y_0,0)$. Our choice of $y_i,z_i$ guarantees
    that the $w_i$ must be distinct, which is impossible when $k >
    h_1(0,y_0,0)$.
  \end{comment}

  To determine whether $x$ is in the range of $f$, use the following
  procedure:
  \begin{itemize}
  \item[] First find $y$ such that $h_1(x,y,0) = h_1(x,y+1,0)$. Answer
    yes if $x \in f[h_2(x,y+1,0)]$, otherwise answer no.
  \end{itemize}
  This procedure will never return false positive answers, so suppose
  that $x = f(s)$ and we check that the algorithm answers yes on input
  $x$.  The existence of a $y$ such that $h_1(x,y,0) = h_1(x,y+1,0)$
  is guaranteed by Lemma~\ref{L:lexlim}. Given such a $y$ we can then
  use Lemma~\ref{L:lexinf} to find $z$ such that $s \leq
  h_3(x,y,z)$. Since
  \begin{equation*}
    h_1(x,y,0) = h_1(x,y,z) = h_1(x,y+1,0), 
  \end{equation*}
  we then have
  \begin{equation*}
    h_2(x,y,0) \leq h_2(x,y,z) \leq h_2(x,y+1,0).
  \end{equation*}
  Since $c(h(x,y,z)) = 0$ and $x \leq h_1(x,y,z)$ by
  Lemma~\ref{L:lexfst}, we know that $x \in f[h_2(x,y,z)] \liff x \in
  f[h_3(x,y,z)]$. Since $s \leq h_3(x,y,z)$ we know that $x \in
  f[h_3(x,y,z)]$, and since $h_2(x,y,z) \leq h_2(x,y+1,0)$ we conclude
  that $x \in f[h_2(x,y+1,0)]$.
\end{proof}

We end this section by proving that $\LIndec_k^n$ implies $\EIndec_k^n$.
In light of Theorem~\ref{T:Lindec3ACA} and Theorem~\ref{T:elem},
this is really only interesting in the case $n=2$.

\begin{proposition}[$\RCA_0$; $1 \leq n < \omega$]
  For every positive integer $k$, $\LIndec_k^n$ implies $\EIndec_k^n$.
\end{proposition}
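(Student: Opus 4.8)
The plan is to show that if $h:\NN^n\to\NN^n$ is a lexicographic embedding witnessing $\LIndec_k^n$ for a coloring $c$, so that $c\circ h$ is constant with value $d$, then
\begin{equation*}
  (\exists^\infty x_1)(\exists^\infty x_2)\cdots(\exists^\infty x_n)[c(x_1,\dots,x_n)=d].
\end{equation*}
The key observation is that along the ``staircase'' of points $h(x_1,0,\dots,0)$, $h(x_1,x_2,0,\dots,0)$, and so on, the coordinates of $h$ can be driven to infinity one at a time using Lemmas~\ref{L:lexfst} and~\ref{L:lexinf}. Concretely, I would argue by showing the following: for every $m$ there exist $x_1 > m, \dots, x_n > m$ with $c(x_1,\dots,x_n)=d$. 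Working from the inside out, fix arbitrary $x_1,\dots,x_{n-1}$ in the domain of $h$; by Lemma~\ref{L:lexinf} (applied with $i=n$), $\lim_{z\to\infty} h_n(x_1,\dots,x_{n-1},z) = \infty$, so the $n$-th coordinate of $h(x_1,\dots,x_{n-1},z)$ can be made larger than $m$, and since $c(h(x_1,\dots,x_{n-1},z))=d$ this gives, for each choice of the first $n-1$ coordinates of $h$ at such points, infinitely many admissible values of the $n$-th coordinate.

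The slightly delicate point is that the $n$-th coordinate $h_n$ of these points depends on $x_1,\dots,x_{n-1}$ as well, so I must make sure that the preceding coordinates $h_1,\dots,h_{n-1}$ themselves range over infinitely many values and not just the diagonal. For this I would iterate: by Lemma~\ref{L:lexinf} with $i=1$, $\lim_{x_1\to\infty} h_1(x_1,0,\dots,0)=\infty$, so $h_1$ takes values above $m$; having fixed such an $x_1$ with $h_1(x_1,0,\dots,0) > m$, Lemma~\ref{L:lexinf} with $i=2$ (applied to the restricted embedding, exactly as in the proof of that lemma) gives $\lim_{x_2\to\infty} h_2(x_1,x_2,0,\dots,0)=\infty$ while, by Lemma~\ref{L:lexfst}, $h_1(x_1,x_2,0,\dots,0)=h_1(x_1,0,\dots,0) > m$ provided we first stabilize $h_1$ using Lemma~\ref{L:lexlim} — or more simply, $h_1(x_1,x_2,\dots,x_n)\ge x_1 > m$ directly from Lemma~\ref{L:lexfst}. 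Continuing, I obtain $x_1,\dots,x_n$ such that $h_i(x_1,\dots,x_n) > m$ for every $i$: at stage $i$ I use Lemma~\ref{L:lexfst} applied to the appropriate restricted embedding to guarantee $h_i \ge (\text{the shifted first variable}) > m$, having chosen the earlier variables large enough. Since $c(h_1(x_1,\dots,x_n),\dots,h_n(x_1,\dots,x_n)) = d$, the point $(h_1(\bar x),\dots,h_n(\bar x))$ is a witness with all coordinates exceeding $m$.

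Thus, letting $y_i = h_i(x_1,\dots,x_n)$, for every $m$ we have found $(y_1,\dots,y_n)$ with each $y_i > m$ and $c(y_1,\dots,y_n)=d$. To convert this into the nested-infinitary statement $(\exists^\infty y_1)\cdots(\exists^\infty y_n)[c(y_1,\dots,y_n)=d]$, I would be a little more careful and build the witnesses in the order of the quantifiers: first show $(\exists y_1 > m)$ such that $(\exists y_2)\cdots$, which amounts to observing that the construction above, with $x_1$ chosen so that $h_1(x_1,0,\dots,0)>m$ and then the later variables left free, shows that for that fixed $y_1=h_1(x_1,\dots,x_n)$ the remaining coordinates can independently be pushed past any bound — this is exactly the content of iterating Lemma~\ref{L:lexinf} on the restricted embeddings $\tilde h$ defined in the proof of Lemma~\ref{L:lexinf}. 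The main obstacle is bookkeeping: ensuring at each stage that fixing a large value of $x_i$ does not collapse the range of the later coordinates, which is handled precisely by passing to the restricted lexicographic embedding $\tilde h_k(y_1,\dots) = h_{i+k-1}(x_1,\dots,x_{i-1},\tilde x_i + y_1, y_2,\dots)$ from Lemma~\ref{L:lexinf} and reapplying Lemmas~\ref{L:lexfst} and~\ref{L:lexinf} to it. Everything here is arithmetical and carried out in $\RCA_0$, and the induction on $n$ is external, matching the hypotheses of the proposition.
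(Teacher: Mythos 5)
Your proposal takes essentially the same route as the paper's proof: an external induction on $n$ in which one uses Lemma~\ref{L:lexfst} to bound the first output coordinate, stabilizes it (you invoke Lemma~\ref{L:lexlim}; the paper uses $\Ind\Sigma^0_1$ together with the bound from Lemma~\ref{L:lexfst} to take a maximum, which is the same thing), and then passes to the restricted embedding $h'_i(z_1,\dots,z_n)=h_i(w_0,w_1+z_1,z_2,\dots,z_n)$ so that the induction hypothesis gives the inner $(\exists^\infty x_2)\cdots(\exists^\infty x_n)$ for that fixed first coordinate. The paper's writeup is simply crisper about isolating the single $x_0\ge w_0$ on which the remaining quantifier block is established; your draft circles this same point a couple of times before landing on it.
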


\begin{proof}
  By (external) induction on $n$, we show that {\itshape for any
    coloring $c: \NN^n \to \set{0,\dots,k-1}$, if there is a
    lexicographic embedding $h: \NN^n \to \NN^n$ such that $c \circ h$
    is constant with value $d < k$ then
    \begin{equation*}
      (\exists^\infty x_1)\cdots(\exists^\infty x_n)[c(x_1,\dots,x_n) = d].
    \end{equation*}}

  The result is trivial for $n = 1$. Suppose the result is true for
  some $n$. Work in $\RCA_0$. Let $c:\NN^{n+1}\to\set{0,\dots,k-1}$ be
  a coloring and let $h: \NN^{n+1} \to \NN^{n+1}$ lexicographic
  embedding $h:\NN^{n+1}\to\NN^{n+1}$ such that $c \circ h$ is
  constant with value $d < k$. For convenience, we will index our
  coordinates for $\NN^{n+1}$ from $0$ to $n$ instead of $1$ to
  $n+1$. Thus $h_0: \NN^{n+1}\to\NN$ is the first coordinate of $h$.

  Let $w_0 \in \NN$ be given, we want to show that
  \begin{equation}\tag{$\dagger$}\label{E:T1}
    (\exists x_0 \geq w_0)(\exists^\infty x_1)\cdots(\exists^\infty x_n)[c(x_0,x_1,\dots,x_n) = d].
  \end{equation}
  By Lemma~\ref{L:lexfst}, we have $w_0 \leq h_0(w_0,w_1,\dots,w_n) <
  h_0(w_0+1,0,\dots,0)$ for all $w_1,\dots,w_n \in \NN$.  By
  $\Ind\Sigma^0_1$, let
  \begin{align*}
    x_0 &= \max\set{h_0(w_0,w_1,\dots,w_n) : w_1,\dots,w_n \in \NN} \\
    &= \max\set{h_0(w_0,w_1,0,\dots,0) : w_1 \in \NN}
  \end{align*}
  and pick $w_1$ such that $x_0 = h_0(w_0,w_1,0,\dots,0)$.
  
  Define the coloring $c':\NN^n\to\set{0,\dots,k-1}$ by
  \begin{equation*}
    c'(x_1,\dots,x_n) = c(x_0,x_1,\dots,x_n),
  \end{equation*}
  and define the function $h':\NN^n\to\NN^n$ by
  \begin{equation*}
    h'_i(z_1,\dots,z_n) = h_i(w_0,w_1+z_1,z_2,\dots,z_n).
  \end{equation*}
  Then $h'$ is a lexicographic embedding is such that $c' \circ h'$ is
  constant with value $d$.  By the induction hypothesis applied to
  $h'$ and $c'$,
  \begin{equation*}
    (\exists^\infty x_1)\cdots(\exists^\infty x_n)[c'(x_1,\dots,x_n) = d].    
  \end{equation*}
  Since $x_0 \geq w_0$, this implies \eqref{E:T1}.
\end{proof}

\begin{corollary}[$\RCA_0$]
  $\LIndec^2$ implies $\Bnd{\Pi^0_2}$.
\end{corollary}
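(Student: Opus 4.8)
The plan is to chain the two results just proved, since the corollary is an immediate consequence of them. First I would apply the preceding proposition with $n = 2$: for each $k$ it gives that $\LIndec_k^2$ implies $\EIndec_k^2$, and the argument there (external induction on $n$, with $k$ left as a free variable) is uniform in $k$, so in fact $\RCA_0$ proves $\LIndec^2 \to \EIndec^2$. Next I would invoke part~\eqref{T:elem:ei2bnd} of Theorem~\ref{T:elem} with $n = 2$, namely $\EIndec^2 \to \Bnd{\Pi^0_2}$. Recall that this part was obtained from Proposition~\ref{P:Sapprox} simply by noting that a strongly $1$-stable coloring is in particular a coloring, so that $\EIndec^2$ implies the restricted statement appearing in Proposition~\ref{P:Sapprox}, which is equivalent to $\Bnd{\Pi^0_2}$. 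Composing the two implications inside $\RCA_0$ yields $\LIndec^2 \to \Bnd{\Pi^0_2}$.

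If a more self-contained argument is wanted, one can unwind these ingredients. Given $c : \NN^2 \to \set{0,\dots,k-1}$ together with a lexicographic embedding $h$ with $c \circ h$ constant of value $d$, the inductive argument of the previous proposition produces, from any $w_0$, some $x_0 \geq w_0$ with $(\exists^\infty x_1)[c(x_0,x_1) = d]$, hence $(\exists^\infty x_0)(\exists^\infty x_1)[c(x_0,x_1) = d]$, i.e. $\EIndec_k^2$. Then, exactly as in the proof of Proposition~\ref{P:Sapprox} (equivalently, via $\Reg{\Pi^0_1}$), an arbitrary $\Pi^0_1$ instance of the regularity principle is reflected into an instance of $\EIndec^2$ by representing the associated total $\Sigma^0_2$-definable function as an iterated limit of a strongly $1$-stable coloring (using Lemma~\ref{L:Limit}), and $\Bnd{\Pi^0_2}$ is read off.

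I do not expect any genuine obstacle here: the substantive work lies in the previous proposition and in Proposition~\ref{P:Sapprox}. The only point worth checking is that the universal quantifier over $k$ causes no difficulty, and it does not, since $\LIndec^2$ abbreviates $(\forall k)\LIndec_k^2$, $\EIndec^2$ abbreviates $(\forall k)\EIndec_k^2$, and the implication $\LIndec_k^2 \to \EIndec_k^2$ is proved uniformly in the internal parameter $k$.
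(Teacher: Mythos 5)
Your proof is correct and is exactly the paper's argument: apply the preceding proposition (uniformly in $k$) to get $\LIndec^2 \to \EIndec^2$, and then invoke Theorem~\ref{T:elem}\eqref{T:elem:ei2bnd} with $n=2$ to conclude $\EIndec^2 \to \Bnd{\Pi^0_2}$. The extra unwinding you sketch is not needed but is consistent with the referenced ingredients.
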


\begin{proof}
  $\EIndec^2$ implies $\Bnd{\Pi^0_2}$ by Theorem~\ref{T:elem}.
\end{proof}

\subsection{Indecomposability and Games}

Another formulation of combinatorial indecomposability is obtained by
interpreting the conclusion of $\EIndec_k^n$ in Hintikka's
Game-Theoretic Semantics. 
This process leads to the following game.

\begin{definition}
  Given a finite coloring $c: \NN^n\to\set{0,\dots,k-1}$, the game
  $\GINO_n(c)$ between \HUGO\ and \TINO\ is played as follows.
  \begin{itemize}
  \item To start the game, \HUGO\ chooses a color $d < k$.
  \item Then, \TINO\ and \HUGO\ alternately play
    \begin{equation*}
      \begin{array}{c|cccccccc}
        \text{\TINO} & a_1 & & a_2 & & \cdots & & a_n & \\
        \hline
        \text{\HUGO} & & b_1 & & b_2 & & \cdots & & b_n
      \end{array}
    \end{equation*}
    such that $a_i \leq b_i$ for $i = 1,\dots,n$.
  \end{itemize}
  \HUGO\ wins this play if $c(b_1,b_2,\dots,b_n) = d$, otherwise
  \TINO\ wins.
\end{definition}

\noindent
Of course, \TINO\ can never have a winning strategy for this game.

\begin{proposition}[$\RCA_0$; $1 \leq n < \omega$]
  For every finite coloring $c:\NN^n\to\set{0,\dots,k-1}$, \TINO\ does
  not have a winning strategy in the game $\GINO_n(c)$.
\end{proposition}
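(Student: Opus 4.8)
The plan is to argue that $\TINO$ cannot have a winning strategy because $\HUGO$ has a "spoiling" response: given any purported strategy $\tau$ for $\TINO$, $\HUGO$ can play so as to reach a position with a prescribed color, contradicting that $\tau$ wins. The cleanest way to organize this in $\RCA_0$ is by external induction on $n$, mirroring the inductive structure used in the lemmas on lexicographic embeddings. Throughout, note that a strategy for $\TINO$ in $\GINO_n(c)$ is a function (or finite tuple of functions) of the current model, so all the objects we manipulate exist.

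For the base case $n = 1$: suppose $\tau$ is a strategy for $\TINO$. After $\HUGO$ announces a color $d$, $\TINO$ plays $a_1 = \tau(d)$, and then $\HUGO$ is free to pick any $b_1 \ge a_1$. It suffices to find some $b_1 \ge a_1$ with $c(b_1) = d$ for a well-chosen $d$. But $\HUGO$ gets to choose $d$ first, so $\HUGO$ computes $a_1^{(j)} = \tau(j)$ for each $j < k$, sets $M = \max_{j<k} a_1^{(j)}$ (this max exists by bounded $\Sigma^0_0$ reasoning), and then picks any color $d$ and any value $b_1 \ge M$ with $c(b_1) = d$ — such a pair exists because $c$ is total, so $c$ takes \emph{some} value on $\{M, M+1, M+2, \dots\}$; pick $d$ to be that value and $b_1$ a witness. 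Then $\HUGO$ wins against $\tau$.

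For the inductive step, suppose the result holds for $n$ and let $c : \NN^{n+1} \to \set{0,\dots,k-1}$ and a strategy $\tau$ for $\TINO$ in $\GINO_{n+1}(c)$ be given. After $\HUGO$ commits to a color $d$, $\TINO$'s first move $a_1 = \tau(d)$ is determined; $\HUGO$ picks some $b_1 \ge a_1$ (to be specified). The residual game is a copy of $\GINO_n$ for the coloring $c_{b_1}(x_2,\dots,x_{n+1}) = c(b_1,x_2,\dots,x_{n+1})$, with $\TINO$ following the residual strategy $\tau_{b_1}$ obtained from $\tau$ by fixing the first two moves $a_1, b_1$. By the induction hypothesis applied to $c_{b_1}$, $\TINO$'s residual strategy $\tau_{b_1}$ is not winning, so $\HUGO$ has \emph{some} reply in the residual game beating it — i.e., for each choice of $b_1$ there is a color $d'$ and a continuation of $\HUGO$'s moves forcing $c_{b_1} = d'$. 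The subtlety is that the induction hypothesis is an "exists a winning play" statement, not a uniform one, so we cannot immediately extract a single color working for all $b_1$; but $\HUGO$ only needs \emph{one} $b_1$. So: $\HUGO$ announces the color $d$ that the induction hypothesis hands back for the specific choice $b_1 = a_1$. Concretely, first compute $a_1 = \tau(d)$ — but this is circular since $d$ depends on $a_1$ via the induction hypothesis. To break the circularity, observe $a_1 = \tau(d)$ ranges over at most $k$ values as $d$ varies; for each candidate value $a$ of $a_1$, apply the induction hypothesis to $c_a$ and $\TINO$'s residual strategy to get a winning color $d_a$; then there must be some $d$ with $\tau(d) = a$ for which $d_a = d$ is \emph{not} automatic, so instead $\HUGO$ simply iterates: there are finitely many $d < k$, and for each, $\HUGO$ checks whether announcing $d$ and playing $b_1 = \tau(d)$ leads (via the induction hypothesis for $c_{\tau(d)}$) to a winning continuation with final color $d$. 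The honest fix is to strengthen the induction hypothesis to the \emph{uniform} statement: "for every $c:\NN^n\to k$ and every strategy $\tau$ for $\TINO$, and for every color $d<k$, there is a play consistent with $\tau$ in which $\HUGO$'s outputs have color $d$" — this stronger statement is what the base case actually proves, it is inherited through the induction, and it immediately gives the proposition (take any $d$).

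The main obstacle is precisely this bookkeeping about whether $\HUGO$ can force a \emph{prescribed} color rather than merely \emph{some} color: the naive induction hypothesis is too weak, and one must formulate and carry the uniform version "for every target color $d$, $\HUGO$ can force final color $d$ against any $\TINO$-strategy." Once that is in place, the inductive step is routine: fix $d$, let $a_1 = \tau(d)$, play $b_1 = a_1$ (say), pass to $c_{a_1}$ and $\tau_{a_1}$, and apply the (uniform) induction hypothesis with the same target $d$. All arithmetic used — computing $\tau(d)$, maxima over $k$-many values, totality of $c$ — is available in $\RCA_0$ with only $\Sigma^0_0$ or bounded induction, so there is no induction-strength issue.
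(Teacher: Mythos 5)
Your proposed strengthening of the induction hypothesis is false, and this breaks the proof. You claim that ``for every $c:\NN^n\to k$, every $\TINO$-strategy $\tau$, and \emph{every} color $d<k$, there is a play consistent with $\tau$ in which $\HUGO$'s outputs have color $d$,'' and you assert that the base case already proves this. It does not, and it cannot: take $c$ to be the constant coloring with value $0$ and $k=2$; then $\HUGO$ can never reach a position with color $1$ no matter what $\TINO$ does. Your own base-case argument only produces \emph{some} color $d$ (namely one that $c$ takes on $\set{M, M+1, \dots}$); it never establishes anything for a prescribed $d$. So the ``honest fix'' you reach for in the final paragraph is not available, and the circularity you correctly identified in the inductive step remains unresolved.

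The circularity is real, but the right way out is not a uniform-color IH; it is to construct a single ``master play'' that is simultaneously valid against $\TINO$'s strategy for \emph{every} initial color choice. That is the paper's argument: writing $\sigma_d$ for $\TINO$'s strategy when $\HUGO$ opens with color $d$, define $b_m = \max_{d<k}\sigma_d(b_1,\dots,b_{m-1})$ by primitive recursion for $m=1,\dots,n$. Then for each $d$, the sequence $b_1,\dots,b_n$ is a legal play for $\HUGO$ against $\sigma_d$ (each $b_m$ dominates $\TINO$'s $m$-th move), so if $\sigma_d$ were winning we would have $c(b_1,\dots,b_n)\neq d$ for every $d<k$, which is impossible since $c$ takes values in $\set{0,\dots,k-1}$. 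This dispenses with the external induction on $n$ entirely, requires only a bounded (over $k$) maximum at each of $n$ standard stages, and sidesteps the issue your induction runs into. If you do want to phrase it inductively, the correct hypothesis to carry is ``there is a single finite play legal against $\sigma_d$ for all $d<k$ simultaneously,'' not ``$\HUGO$ can force any prescribed color.''
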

\begin{proof}
  Suppose that $(\sigma_d)_{d<k}$ is such that
  $\sigma_d:\NN^{<n}\to\NN$ is a winning strategy for \TINO\ in
  $\GINO_n(c)$ when \HUGO's first move is $d$. Define $b_1,\dots,b_n$
  by
  \begin{equation*}
    b_m = \max_{d<k} \sigma_d(b_1,\dots,b_{m-1})
  \end{equation*}
  for $m = 1,\dots,n$.  Then, for every $d \in \set{0,\dots,k-1}$,
  $b_1,b_2,\dots,b_n$ is a valid sequence of play for \HUGO\ against
  \TINO's strategy $\sigma_d$, which means that $c(b_1,b_2,\dots,b_n)
  \neq d$.  Therefore, $c(b_1,\dots,b_n) \notin \set{0,\dots,k-1}$ ---
  a contradiction.
\end{proof}

\noindent
If the game $\GINO_n(c)$ is determined, then \HUGO\ must have a
winning strategy, which leads to the following principle.

\begin{axiom}{$\GIndec_k^n$}
  For every finite coloring $c:\NN^n\to\set{0,\dots,k-1}$, \HUGO\ has
  a winning strategy in the game $\GINO_n(c)$.
\end{axiom}

\noindent
As usual, we use $\GIndec^n$ to denote $(\forall
k)\GIndec_k^n$. Again, it is easy to see that $\GIndec^1$ is
equivalent to $\Bnd{\Pi^0_1}$.

It turns out that $\GIndec_k^n$ is equivalent to a strong version of
$\LIndec_k^n$. A \emph{strong lexicographic embedding}
$h:\NN^n\to\NN^n$ is a lexicographic embedding with the additional
property that
\begin{equation*}
  x_1 = y_1,\dots,x_i=y_i \THEN h_i(x_1,\dots,x_n) = h_i(y_1,\dots,y_n)
\end{equation*}
holds for $i = 1,\dots,n$.
Characterizing $\GIndec^n$ as the existence of such strong 
lexicographic embedding relates $\GIndec^n$ to $\wk\RT^2_k$ and $\RT^2_k$.

\begin{proposition}[$\RCA_0$; $1 \leq n < \omega$]\label{P:strlexandgame}
  Given a finite coloring $c:\NN^n\to\set{0,\dots,k-1}$, \HUGO\ has a
  winning strategy in $\GINO_n(c)$ if and only if there is a strong
  lexicographic embedding $h:\NN^n\to\NN^n$ such that $c \circ h$ is
  constant.
\end{proposition}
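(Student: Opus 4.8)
The plan is to prove both directions of the equivalence by translating between a winning strategy for \HUGO\ in $\GINO_n(c)$ and a strong lexicographic embedding $h$. The key observation is that the ``strongness'' condition---that $h_i$ depends only on the first $i$ coordinates---is exactly what lets us read off \HUGO's responses $b_1,\dots,b_n$ one coordinate at a time, and conversely a winning strategy, being a function of the moves seen so far, produces values $b_i$ that depend only on $a_1,\dots,a_i$, hence (after composing with the choices) only on the first $i$ coordinates.

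For the direction from a strong lexicographic embedding to a winning strategy: given $h$ with $c\circ h$ constant of value $d$, \HUGO\ first announces $d$. When \TINO\ plays $a_1,\dots,a_n$ in turn, \HUGO\ maintains indices: after seeing $a_1$, let $w_1$ be least with $h_1(w_1,0,\dots,0)\geq a_1$ (this exists by Lemma~\ref{L:lexinf}), and respond $b_1 = h_1(w_1,0,\dots,0)\geq a_1$. Inductively, after seeing $a_1,\dots,a_i$ and having fixed $w_1,\dots,w_{i-1}$, let $w_i$ be least with $h_i(w_1,\dots,w_{i-1},w_i,0,\dots,0)\geq a_i$ (again by Lemma~\ref{L:lexinf}), and respond $b_i = h_i(w_1,\dots,w_i,0,\dots,0)$. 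The strongness of $h$ guarantees $b_i = h_i(w_1,\dots,w_n)$ for the full tuple $(w_1,\dots,w_n)$, so at the end $c(b_1,\dots,b_n) = c(h(w_1,\dots,w_n)) = d$, and \HUGO\ wins. One must check this search is carried out in $\RCA_0$: the relevant limits are $\Sigma^0_1$ facts (Lemma~\ref{L:lexinf}), so the least witnesses are found by unbounded search, which is available. This assembles into a strategy $\sigma:\NN^{<n}\to\NN$ that exists as a function in the model.

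For the converse: suppose $\tau:\NN^{<n}\to\NN$ is a winning strategy for \HUGO, where $\tau(a_1,\dots,a_{i-1})$ gives $b_i$ but we must be careful since \HUGO\ also sees his own previous moves---however those are determined by \TINO's moves via $\tau$ itself, so without loss of generality $b_i$ is a function of $a_1,\dots,a_i$ alone (after fixing \HUGO's initial color $d$). Define $h$ by recursion on coordinates: set $h_i(x_1,\dots,x_n)$ to be \HUGO's response $b_i$ when \TINO\ plays $a_j = h_j(x_1,\dots,x_n)$ for $j<i$ and $a_i = x_i$ --- more precisely, define $h_1(x_1,\dots,x_n) = \tau(\,)$-response to $a_1 = \max(x_1,\tau(\,))$ or rather force $a_1$ large enough that $h$ is strictly increasing in $x_1$ and $\geq x_1$; then iterate. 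The point is to choose \TINO's moves $a_i$ so that the resulting $b_i = h_i$ are strictly lexicographically increasing and satisfy $h_i$ depends only on $x_1,\dots,x_i$; since $\tau$'s legality constraint only requires $a_i\leq b_i$, we have freedom to pad $a_i$ upward (e.g. $a_i = h_i(x_1,\dots,x_{i-1},x_i-1,0,\dots,0)+x_i+1$ style choices) to secure strict monotonicity, using that \HUGO\ must always answer $b_i\geq a_i$. Since $\tau$ is winning, $c(h(x_1,\dots,x_n)) = c(b_1,\dots,b_n) = d$ for the announced $d$, so $c\circ h$ is constant.

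The main obstacle is the bookkeeping in the converse direction: one must define $h$ by primitive recursion on the (external, standard) coordinate index $i$ so that simultaneously (i) $h$ is a lexicographic \emph{embedding} (strictly order-preserving, which requires $h_i$ strictly increasing in $x_i$ when $x_1,\dots,x_{i-1}$ are fixed and jumping appropriately when an earlier coordinate increases), (ii) $h$ is \emph{strong} ($h_i$ a function of the first $i$ coordinates only), and (iii) the tuple $(h_1,\dots,h_n)$ arises as a legal play against $\tau$. Getting all three simultaneously is a matter of choosing \TINO's padding moves carefully and verifying the inequalities; the monotonicity-when-a-prefix-increases condition is the fiddly part, and one handles it exactly as in the proof of Lemma~\ref{L:lexfst}, by ensuring the block of values $h_i(x_1,\dots,x_{i-1},\ast,0,\dots,0)$ lies strictly below $h_{i-1}(x_1,\dots,x_{i-2},x_{i-1}+1,0,\dots,0)$'s worth of room. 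All the required searches are $\Sigma^0_1$ and $\Ind\Sigma^0_1$ suffices for the finitely many (standardly many) recursion steps, so the whole argument goes through in $\RCA_0$.
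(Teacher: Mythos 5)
Your plan in both directions coincides with the paper's, but the direction ``winning strategy $\Rightarrow$ strong lexicographic embedding'' is left as a sketch, and your first formulation of the recursion is actually circular: if TINO plays $a_j = h_j(\bar{x})$ for $j < i$, then HUGO's response at coordinate $j$ is $\tau(a_1,\dots,a_j) = \tau(h_1,\dots,h_j)$, which has no reason to equal $h_j$, so the play you describe is not a play against $\tau$ at all. Your ``more precisely'' fix --- pad TINO's move at coordinate $i$ up past the previous value of $h_i$ --- is the correct idea, and the recursion you are groping for is
\begin{align*}
  h_i(a_1,\dots,a_{i-1},0) &= \sigma\bigl(h_1(a_1),\dots,h_{i-1}(a_1,\dots,a_{i-1}),0\bigr),\\
  h_i(a_1,\dots,a_{i-1},a+1) &= \sigma\bigl(h_1(a_1),\dots,h_{i-1}(a_1,\dots,a_{i-1}),h_i(a_1,\dots,a_{i-1},a)+1\bigr),
\end{align*}
where the strategy $\sigma$ is read as a function of HUGO's prior responses together with TINO's current move (which resolves the circularity). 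Each $h_i$ depends on $a_1,\dots,a_i$ only by construction, and is strictly increasing in $a_i$ since a legal strategy must return a value $\geq$ TINO's last move.

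Your closing worry about ``monotonicity-when-a-prefix-increases,'' handled in the style of Lemma~\ref{L:lexfst} by reserving room below $h_{i-1}(\dots,x_{i-1}+1,\dots)$, is a red herring: for a map satisfying the strongness condition, strict monotonicity of each $h_i(x_1,\dots,x_{i-1},\cdot)$ already implies that $h$ is a lexicographic embedding, because if $\bar{x}<_{\lex}\bar{y}$ first differ at index $i$ then $h_j(\bar{x})=h_j(\bar{y})$ for $j<i$ while $h_i(\bar{x})<h_i(\bar{y})$. No coordination between $h_i$ and $h_{i-1}$ is required --- that kind of bookkeeping only arises for embeddings without the strongness condition. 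In the easy direction you also do more work than necessary: a strong lexicographic embedding automatically satisfies $x_i \leq h_i(x_1,\dots,x_i)$, so $\sigma(a_1,\dots,a_i)=h_i(a_1,\dots,a_i,0,\dots,0)$ already gives a legal winning strategy for HUGO with no search for a least $w_i$, which is the paper's choice.
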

\begin{proof}
  Suppose that $\sigma:\NN^{\leq n}\to\NN$ is a winning strategy for
  \HUGO. Let $d < k$, be \HUGO's color choice. For $i = 1,\dots, n$,
  define the function $h_i:\NN^i\to\NN$ by primitive recursion as follows
  \begin{align*}
    h_i(a_1,\dots,a_{i-1},0) &=
    \sigma(h_1(a_1),\dots,h_{i-1}(a_1,\dots,a_{i-1}),0), \\
    h_i(a_1,\dots,a_{i-1},a+1) &=
    \sigma(h_1(a_1),\dots,h_{i-1}(a_1,\dots,a_{i-1}),h_i(a_1,\dots,a_{i-1},a)+1).
  \end{align*}
  (The definition of $h_i$ depends on the prior definition of
  $h_1,\dots,h_{i-1}$, but since $n$ is standard this is not
  problematic.) Then the function
  \begin{equation*}
    h(a_1,\dots,a_n) = (h_1(a_1),\dots,h_n(a_1,\dots,a_n))
  \end{equation*}
  is a strong lexicographic embedding such that $c\circ h$ is constant
  with value $d$.
  
  Conversely, suppose that $h:\NN^n\to\NN^n$ is a strong lexicographic
  embedding such that $c \circ h$ is constant with value $d$. Define
  the strategy $\sigma:\NN^{\leq n}\to\NN$ as follows. Let $d$ be the
  initial color choice for $\sigma$ and then define
  $\sigma(a_1,\dots,a_i) = h_i(a_1,\dots,a_i,0,\dots,0)$. Then, by
  definition of strong lexicographic embedding, we always have
  \begin{equation*}
    h(a_1,\dots,a_n) = (\sigma(a_1),\dots,\sigma(a_1,\dots,a_n)),
  \end{equation*}
  which ensures that $\sigma$ is a winning strategy for \HUGO.
\end{proof}

\begin{corollary}[$\RCA_0$]\label{C:weakgame}
  $\GIndec_k^2$ is equivalent to $\wk\RT^2_k$.
\end{corollary}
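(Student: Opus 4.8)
The plan is to prove Corollary~\ref{C:weakgame} by combining Proposition~\ref{P:strlexandgame} with a direct translation between strong lexicographic embeddings $h:\NN^2\to\NN^2$ and the combinatorial data in $\wk\RT^2_k$. By Proposition~\ref{P:strlexandgame}, $\GIndec_k^2$ for a given coloring $c:\NN^2\to\set{0,\dots,k-1}$ is equivalent to the existence of a strong lexicographic embedding $h:\NN^2\to\NN^2$ with $c\circ h$ constant. The key observation is that for such an $h$, strongness of the embedding forces $h_1(a_1,a_2)=h_1(a_1,0)=:g(a_1)$ to depend only on $a_1$, and $g$ is strictly increasing by Lemma~\ref{L:lexfst} (the inequality $g(a_1)=h_1(a_1,a_2)<h_1(a_1+1,0)=g(a_1+1)$). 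So the range $H=\operatorname{ran}(g)$ is an infinite set, and it remains to understand what $c\circ h$ being constant says about $c$ restricted to $H$.

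First I would show $\GIndec_k^2\THEN\wk\RT^2_k$. Given $c:\NN^2\to\set{0,\dots,k-1}$, apply $\GIndec_k^2$ and extract $h$, $d$, $g$, and $H$ as above. For each fixed $a_1$, Lemma~\ref{L:lexinf} applied to $h$ (or directly to the tail embedding $z\mapsto h_2(a_1,z)$) gives $\lim_{a_2\to\infty}h_2(a_1,a_2)=\infty$, so $\set{h_2(a_1,a_2):a_2\in\NN}$ is an infinite set of $y$'s, all of which satisfy $c(g(a_1),y)=c(h(a_1,a_2))=d$. Hence for every $x=g(a_1)\in H$, the set $\set{y:c(x,y)=d}$ is infinite — which is exactly the conclusion of $\wk\RT^2_k$ for $H$ and $d$. (One must be slightly careful that $\wk\RT^2_k$ as stated quantifies over all $x\in H$ with no ordering restriction, but the argument delivers this for every element of $H$.)

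Conversely, to prove $\wk\RT^2_k\THEN\GIndec_k^2$: given $c$, apply $\wk\RT^2_k$ to get $d<k$ and an infinite $H$ with $\set{y:c(x,y)=d}$ infinite for every $x\in H$. Enumerate $H=\set{e_0<e_1<\cdots}$ and set $h_1(a_1,a_2)=e_{a_1}$. Then define $h_2$ by primitive recursion so that $h_2(a_1,a_2)$ picks, in increasing order, the $(a_2+1)$-st element $y\geq h_2(a_1,a_2-1)+1$ of $\set{y:c(e_{a_1},y)=d}$; infinitude of this set guarantees the recursion never gets stuck, and the recursion is available in $\RCA_0$ since $H$ and $c$ are given. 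This $h$ is a strong lexicographic embedding (strongness in the first coordinate is immediate since $h_1$ ignores $a_2$; strongness in the second is vacuous), and $c\circ h\equiv d$. Proposition~\ref{P:strlexandgame} then gives $\GIndec_k^2$. I expect the main obstacle to be bookkeeping: verifying that the primitive-recursive construction of $h_2$ really lands inside $\RCA_0$'s function existence, and checking that the enumeration $H=\set{e_0<e_1<\cdots}$ and the ``least witness above a bound'' searches are unproblematic — these are routine but need the infinitude hypothesis used in precisely the right place, and no induction beyond $\Ind\Sigma^0_1$ (already part of $\RCA_0$).
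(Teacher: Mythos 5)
Your proof is correct and is essentially the argument the paper intends: the corollary is stated without proof precisely because a strong lexicographic embedding $h:\NN^2\to\NN^2$ is, unpacked, nothing more than an increasing $g = h_1(\cdot,0)$ (whose range is the infinite set $H$) together with, for each $a_1$, a strictly increasing enumeration $h_2(a_1,\cdot)$ of an infinite subset of $\set{y:c(g(a_1),y)=d}$. Your translation in both directions, including the routine $\RCA_0$ bookkeeping, matches this.
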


\begin{corollary}[$\RCA_0$]
  $\RT^2_k$ implies $\GIndec_k^2$.
\end{corollary}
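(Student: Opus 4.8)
The plan is to deduce this from Corollary~\ref{C:weakgame}, which identifies $\GIndec_k^2$ with $\wk\RT^2_k$, together with the essentially trivial observation that $\RT^2_k$ implies $\wk\RT^2_k$. So the only thing left to check is the latter implication, and it requires no induction or coding. Given a coloring $c:\NN^2\to\set{0,\dots,k-1}$, apply $\RT^2_k$ to obtain a color $d<k$ and an infinite set $H$ with $c(x,y)=d$ whenever $x,y\in H$ and $x<y$. I claim that the same $d$ and $H$ witness $\wk\RT^2_k$: fix any $x\in H$; since $H$ is infinite, the set $\set{y\in H:y>x}$ is infinite, and every such $y$ satisfies $c(x,y)=d$, so $\set{y\in\NN:c(x,y)=d}$ is infinite, as required. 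Combining this with Corollary~\ref{C:weakgame} immediately yields $\RT^2_k\to\GIndec_k^2$ over $\RCA_0$.

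If one prefers to see the winning strategy built by hand rather than invoking Corollary~\ref{C:weakgame}, one can unwind the relevant half of that corollary in the present situation: from the witnesses $d$ and $H=\set{e_0<e_1<\cdots}$ one defines a strong lexicographic embedding $h:\NN^2\to\NN^2$ by $h_1(a_1)=e_{a_1}$ and by letting $h_2(a_1,\cdot)$ enumerate in increasing order the infinite (and $\Delta^0_1$ in $c$) set $\set{y:c(e_{a_1},y)=d}$. Then $h$ is indeed a strong lexicographic embedding and $c\circ h$ is constant with value $d$, so Proposition~\ref{P:strlexandgame} converts $h$ into a winning strategy for \HUGO\ in $\GINO_2(c)$.

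I do not expect any real obstacle here: all the work has already been done in Proposition~\ref{P:strlexandgame} and Corollary~\ref{C:weakgame}, and what remains is the one-line reduction of $\wk\RT^2_k$ to $\RT^2_k$.
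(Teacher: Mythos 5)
Your proof is correct and takes exactly the route the paper intends: the corollary is stated immediately after Corollary~\ref{C:weakgame} precisely so that it follows from the (trivial but worth recording, as you do) observation that a homogeneous set for $\RT^2_k$ already witnesses $\wk\RT^2_k$. The explicit construction of the strong lexicographic embedding in your second paragraph is a correct unwinding of Proposition~\ref{P:strlexandgame} but is not needed once Corollary~\ref{C:weakgame} is invoked.
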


\begin{comment}
VERBAGE! We need to give some context for the next three results,
explaining the connection between $\Delta^0_n$ sets and games.
\end{comment}

\begin{proposition}[$\RCA_0$; $1 \leq n < \omega$]\label{P:limgame}
  Suppose $f:\NN^{1+n}\to\NN$ is a weakly $n$-stable function.  Then there
  is a coloring $c:\NN^{1+2n}\to\set{0,1}$ such that if \HUGO\ has a
  winning strategy in the game $\GINO_{1+2n}(c)$ then there are an
  infinite set $H$ and a function $f_\infty:H\to\NN$ such that
  \begin{equation*}
    (\forall^\infty z_1)\cdots(\forall^\infty z_n)[f_\infty(x) = f(x,z_1,\dots,z_n)]
  \end{equation*}
  holds for every $x \in H$.
\end{proposition}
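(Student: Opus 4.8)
The plan is to construct, from a weakly $n$-stable $f:\NN^{1+n}\to\NN$, a coloring $c$ of $(1+2n)$-tuples so that a winning strategy for \HUGO\ in $\GINO_{1+2n}(c)$ automatically produces a sequence of "witnessing intervals'' from which we can read off an infinite set $H$ and a limit function $f_\infty$. The intuition: in the game $\GINO_{1+2n}(c)$, \HUGO\ plays $2n+1$ moves $b_0,b_1,b_1',b_2,b_2',\dots,b_n,b_n'$ responding to \TINO's demands $a_0 \le b_0$, etc. We will use the single move $b_0$ as the input $x$ to $f$, and we will use each pair $(b_j, b_j')$ to carve out an interval $[b_j, b_j']$ of potential values for the $j$-th limit variable $z_j$; \HUGO\ only wins if $f(b_0, z_1, \dots, z_n)$ takes a common value $v$ as each $z_j$ ranges over $[b_j, b_j']$. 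More precisely, define $c(a_0, b_0, a_1, b_1, a_1', b_1', \dots)$ to be $1$ (\HUGO\ wins, say with $d = 1$) exactly when there is a value $v$ such that $f(b_0, z_1, \dots, z_n) = v$ for all $(z_1,\dots,z_n) \in [b_1,b_1'] \times \cdots \times [b_n, b_n']$, and $0$ otherwise. (One must check this is genuinely a coloring of a tuple of fixed arity; the arity $1 + 2n$ comes from the $b_0$ together with the $n$ pairs, with \TINO's moves $a_j, a_j'$ also present as arguments but ignored by $c$.)

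Next I would verify that \TINO\ cannot defeat this by forcing \HUGO\ to stay at small values: given \HUGO's winning strategy $\sigma$, I run it while having \TINO\ play increasingly large demands, extracting an infinite sequence of successful plays. Concretely, I would build by primitive recursion an increasing sequence: at stage $s$, have \TINO\ demand a value $a_0^{(s)}$ larger than all previously seen $b_0$'s, get \HUGO's response $x_s = b_0$; this gives the elements of $H = \{x_s : s \in \NN\}$, which is infinite because the $a_0^{(s)}$ (hence the $x_s$) are strictly increasing. For each such $x_s$, weak $n$-stability of $f$ says there is a target value $v$ with $(\forall^\infty z_1)\cdots(\forall^\infty z_n)[f(x_s, z_1, \dots, z_n) = v]$; I must argue that \HUGO's winning strategy, when \TINO\ demands suitable large lower bounds $a_j, a_j'$ for the interval endpoints, actually returns endpoints $b_j \le b_j'$ whose interval lies in the region where $f(x_s, \cdot) \equiv v$ — but here is the subtlety: weak $n$-stability only guarantees the iterated $\forall^\infty$ quantifier pattern, not that intermediate "$\forall^\infty$'' regions are product-shaped, so a single rectangle $[b_1,b_1']\times\cdots\times[b_n,b_n']$ need not fall entirely inside $\{f(x_s,\cdot) = v\}$.

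This is the main obstacle, and the fix I anticipate is to nest the interval-extraction move by move, mimicking the nested structure of $\GINO$: \HUGO's $j$-th pair of moves comes after \TINO\ has committed $b_0, \dots, b_{j-1}'$, so the winning condition should be phrased as a nested statement — "for all $z_1 \in [b_1,b_1']$, for all $z_2 \in [b_2, b_2']$, …, $f(b_0, z_1, \dots, z_n) = v$'' — which is exactly the shape that weak $n$-stability (applied with the $x$-parameter extended to include the already-chosen $z_1,\dots,z_{j-1}$, if we instead used \emph{strong} stability) would handle; since we only have weak stability, I would instead have \TINO\ drive up the demands so that the outer limits are reached first, and observe that a winning strategy for \HUGO\ must, against a \TINO\ who ignores the constraint until forced, still succeed, whence the top-level value $v$ it commits to must be \emph{the} iterated-limit value for $x = b_0$. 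The payoff is that the strategy itself certifies $(\forall^\infty z_1)\cdots(\forall^\infty z_n)[f(x,z_1,\dots,z_n) = v]$ for each $x = x_s \in H$, and setting $f_\infty(x_s) = v$ (well-defined since the iterated limit value is unique) gives the conclusion.

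Finally I would assemble these pieces: $H$ and $f_\infty$ are both obtained by a single primitive recursion using $\sigma$ as an oracle-free parameter, so they exist in any model of $\RCA_0$; the verification that $f_\infty$ has the claimed $\forall^\infty$ property for each $x \in H$ is the arithmetical content extracted above. I expect the write-up to spend most of its effort on making precise the correspondence between "a play won by \HUGO'' and "a certificate for the iterated limit,'' and on the bookkeeping that keeps the arity equal to $1 + 2n$ while still encoding $n$ interval-endpoint pairs plus the base point.
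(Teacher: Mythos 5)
There is a genuine gap, and it is concentrated exactly where you flagged it. Your coloring asks whether $f(b_0,\cdot)$ is \emph{constant on the box} $[b_1,b_1']\times\cdots\times[b_n,b_n']$, and the constant value a winning play commits to is $v=f(b_0,b_1,\dots,b_n)$, which is not determined until \HUGO's move $2n$ (the penultimate move). At that point only a single round (\TINO's $a_n'$, \HUGO's $b_n'$) remains, so the residual strategy immediately certifies at most $(\exists^\infty z_n)[f(b_0,b_1,\dots,b_{n-1},z_n)=v]$; the full $(\exists^\infty z_1)\cdots(\exists^\infty z_n)$ certification that you need — so that uniqueness of the weak-$n$-stable value can promote it to $(\forall^\infty)$ — is not delivered directly. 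Worse, $v$ is a function of \TINO's intermediate demands through $b_1,\dots,b_n$, so two runs of $\sigma$ against different \TINO\ plays may return different constants, and there is no obvious single $v$ for which you can define $f_\infty(x)$. Your proposed repair — have \TINO\ ``drive up the demands so that the outer limits are reached first'' — cannot be made precise in $\RCA_0$: the thresholds in the iterated $\forall^\infty$ prefix are not accessible to any computable \TINO, and since the game is one-shot there is no feedback to steer toward them. A consistent-value argument \emph{can} probably be rescued (by varying $a_j'$ and $a_{j+1}$ and observing that every rectangle produced by $\sigma$ contains the fixed lower corner $(b_1,\dots,b_j)$, so the constant values must agree), but it requires an induction over the $n$ coordinates that you never carry out, and separately you would still need to show that \HUGO\ cannot win with color $d=0$ for your $c$, which is also unaddressed.

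The paper's coloring is designed precisely to make this commitment happen early. It uses two disjoint blocks of limit variables and sets $c(x,\bar y,\bar z)=1$ iff $f(x,\bar y)=f(x,\bar z)$. With this $c$, the value $f_\infty(x)=f(x,y_1,\dots,y_n)$ is already fixed by move $n+1$, before \TINO\ makes any $\bar z$-demand; the remaining $n$ rounds, run against arbitrarily large demands, then literally witness $(\exists^\infty z_1)\cdots(\exists^\infty z_n)[f(x,\bar z)=f_\infty(x)]$, and weak $n$-stability plus uniqueness of the iterated limit value finishes the job. Ruling out $d=0$ is also immediate there, since weak $n$-stability lets \TINO\ force $f(x,\bar y)=f(x,\bar z)$ by demanding both blocks deep into the stable region. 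That early-commitment device is exactly what your box construction lacks. One minor point: in $\GINO_{1+2n}(c)$ the coloring is always evaluated only on \HUGO's moves, so there is no need to carry \TINO's moves along as ignored arguments; the arity is automatically $1+2n$ once you group \HUGO's moves as $b_0$ together with $n$ pairs.
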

\begin{proof}
  Let $c:\NN^{1+2n}\to\set{0,1}$ be defined by
  \begin{equation*}
    c(x,y_1,\dots,y_n,z_1,\dots,z_n) = 
    \begin{cases}
      1 & \text{when $f(x,\bar{y}) = f(x,\bar{z})$,} \\
      0 & \text{otherwise.}
    \end{cases}
  \end{equation*}

  Suppose $\sigma:\NN^{\leq1+2n}\to\NN$ is a winning strategy for
  \HUGO\ in $\GINO_{1+2n}(c)$. First note that since $f$ is weakly
  $n$-stable, the color $1$ must be \HUGO's first move.

  Now, knowing that \HUGO's first move is $1$, let
  \begin{equation*}
    H = \set{\sigma(w): w \in \NN} = \set{x \in \NN: (\exists w \leq
      x)[x = \sigma(w)]}.
  \end{equation*}
  This is clearly an infinite set. For $x \in H$, define $f_\infty(x)$
  as follows: let $w \leq x$ be least such that $x = \sigma(w)$, then
  let
  \begin{align*}
    y_1 &= \sigma(w,0), & y_2 &= \sigma(w,0,0), & \dots&, & y_n =
    \sigma(w,0,\dots,0),
  \end{align*}
  finally set $f_\infty(x) = f(x,y_1,\dots,y_n)$. The remainder of
  \HUGO's strategy $\sigma$ witnesses that
  \begin{equation*}
    (\exists^\infty z_1)\cdots(\exists^\infty z_n)[f_\infty(x) = f(x,z_1,\dots,z_n)].
  \end{equation*}
  Since $f$ is $n$-stable, it follows that
  \begin{equation*}
    (\forall^\infty z_1)\cdots(\forall^\infty z_n)[f_\infty(x) = f(x,z_1,\dots,z_n)].
  \end{equation*}
  as required.
\end{proof}

\begin{corollary}[$\RCA_0+\Bnd\Pi^0_{2n-1}$; $1 \leq n < \omega$]\label{C:stablelimgame}
  Suppose $f:\NN^{1+n}\to\NN$ is a weakly $n$-stable function.  Then there
  is a coloring $c:\NN^{1+2n}\to\set{0,1}$ such that if \HUGO\ has a
  winning strategy in the game $\GINO_{1+2n}(c)$ then there is a function
  $f_\infty:\NN\to\NN$ such that
  \begin{equation*}
    (\forall^\infty z_1)\cdots(\forall^\infty z_n)[f_\infty(x) = f(x,z_1,\dots,z_n)]
  \end{equation*}
  for every $x$.
\end{corollary}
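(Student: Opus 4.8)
The plan is to reduce to Proposition~\ref{P:limgame}, applied not to $f$ itself but to the auxiliary function $\tilde f:\NN^{1+n}\to\NN$ given by
\begin{equation*}
  \tilde f(x,z_1,\dots,z_n) = \seq{f(0,z_1,\dots,z_n),\,f(1,z_1,\dots,z_n),\,\dots,\,f(x,z_1,\dots,z_n)},
\end{equation*}
where $\seq{\cdot}$ denotes the usual finite-sequence code. Since $f$ is a function of the model, so is $\tilde f$, and the coloring $c:\NN^{1+2n}\to\set{0,1}$ that Proposition~\ref{P:limgame} produces from $\tilde f$ is the coloring we take in the statement.

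The principal step is to verify that $\tilde f$ is weakly $n$-stable --- this is exactly where $\Bnd\Pi^0_{2n-1}$ is used, as $\tilde f$ need not be weakly $n$-stable over $\RCA_0$ alone. Write $g(i)$ for the unique value with $(\forall^\infty z_1)\cdots(\forall^\infty z_n)[g(i)=f(i,z_1,\dots,z_n)]$ supplied by the weak $n$-stability of $f$. Given $x$, I would show that $\seq{g(0),\dots,g(x)}$ witnesses weak $n$-stability of $\tilde f$ at $x$, i.e.\ that
\begin{equation*}
  (\forall^\infty z_1)\cdots(\forall^\infty z_n)\,\textstyle\bigwedge_{i\leq x}[g(i)=f(i,z_1,\dots,z_n)].
\end{equation*}
One proves this by pushing the finite conjunction $\bigwedge_{i\leq x}$ through the $n$ nested limit quantifiers, one at a time, from the innermost outward. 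At the step that carries the conjunction past a given $(\forall^\infty z_j)$ one needs, for the current values of the still-outer variables, a single threshold past which all of finitely many separately eventually-true statements hold; the least such threshold for the $i$-th statement has a graph of complexity at most $\Sigma^0_{2n}$, so a uniform bound over $i\leq x$ is furnished by $\Bnd\Sigma^0_{2n}$, which is equivalent to $\Bnd\Pi^0_{2n-1}$. As $n$ is standard this is a fixed external number of steps, none requiring induction at the $\Sigma^0_{2n}$ level. Thus $\tilde f$ is weakly $n$-stable with iterated limit $\tilde g(x)=\seq{g(0),\dots,g(x)}$.

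Granting this, the remainder is routine. Apply Proposition~\ref{P:limgame} to $\tilde f$: a winning strategy for \HUGO\ in $\GINO_{1+2n}(c)$ yields an infinite set $\tilde H$ and a function $\tilde f_\infty:\tilde H\to\NN$ with $(\forall^\infty z_1)\cdots(\forall^\infty z_n)[\tilde f_\infty(x)=\tilde f(x,z_1,\dots,z_n)]$ for every $x\in\tilde H$, and by uniqueness of iterated limits together with the previous step $\tilde f_\infty(x)=\seq{g(0),\dots,g(x)}$ for $x\in\tilde H$. Since $\tilde H$ is infinite, hence unbounded, the map $h(x)=\min\set{y\in\tilde H:y\geq x}$ is a function of the model; put $f_\infty(x)=(\tilde f_\infty(h(x)))_x$, the $x$-th entry of the sequence coded by $\tilde f_\infty(h(x))$. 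As $h(x)\geq x$ this entry is $g(x)$, so $f_\infty$ is a function of the model with $f_\infty(x)=g(x)$ for all $x$, whence $(\forall^\infty z_1)\cdots(\forall^\infty z_n)[f_\infty(x)=f(x,z_1,\dots,z_n)]$ holds for every $x$. The one real obstacle is the quantifier-complexity bookkeeping in the middle paragraph: it must be carried out carefully enough to keep the cost at $\Bnd\Pi^0_{2n-1}$, rather than at $\Bnd\Pi^0_{2n}$ or $\Ind\Sigma^0_{2n}$, which would make that step easier but the corollary weaker.
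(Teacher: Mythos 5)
Your proof is correct and follows precisely the paper's approach: the auxiliary function $\tilde f$ is exactly the paper's $\bar f$, its weak $n$\hyphen stability is obtained from $\Bnd\Pi^0_{2n-1}$, and Proposition~\ref{P:limgame} is then applied to $\tilde f$. You have merely filled in the quantifier bookkeeping behind the bounding step and the extraction of the total $f_\infty$ from the partial $\tilde f_\infty$ on the infinite set $\tilde H$, both of which the paper's one\hyphen line proof leaves implicit.
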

\begin{proof}
  The function $\bar{f}:\NN^{1+n}\to\NN$ defined by
  \begin{equation*}
    \bar{f}(x,z_1,\dots,z_n) = \seq{f(0,z_1,\dots,z_n),\dots,f(x,z_1,\dots,z_n)}.
  \end{equation*}
  is also weakly $n$-stable by $\Bnd\Pi^0_{2n-1}$; apply
  Proposition~\ref{P:limgame} to $\bar{f}$.
\end{proof}


Here is a partial converse of
Proposition~\ref{P:limgame} for strongly $n$-stable functions.

\begin{proposition}[$\RCA_0$; $1 \leq n < \omega$]\label{P:gamelim}
  If $f^{(i)}:\NN^{1+i}\to\NN$ are such that
  \begin{equation*}
    f^{(i-1)}(x,y_1,\dots,y_{i-1}) = \lim_{y_i\to\infty} f^{(i)}(x,y_1,\dots,y_{i-1},y_i)
  \end{equation*}
  for $i = 1,\dots,n$, then \HUGO\ has a winning strategy in the game
  $\GINO_{1+2n}(c)$, where $c:\NN^{1+2n}\to\set{0,1}$ is the coloring
  associated to $f^{(0)}$ as in Proposition~\ref{P:limgame}.
\end{proposition}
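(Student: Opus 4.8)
The plan is to exhibit \HUGO's winning strategy directly. Recall that $c(x,y_1,\dots,y_n,z_1,\dots,z_n)=1$ exactly when $f^{(n)}(x,\bar y)=f^{(n)}(x,\bar z)$; this is the coloring that Proposition~\ref{P:limgame} attaches to the ($n$-stable) function $f^{(n)}$, whose full iterated limit is $f^{(0)}$. \HUGO\ will open by choosing the color $1$. Writing his subsequent moves as $b_0$ (his first reply), then $b_1,\dots,b_n$ (his replies during the ``$y$-block''), then $b_{n+1},\dots,b_{2n}$ (his replies during the ``$z$-block''), a completed play feeds $x=b_0$, $\bar y=(b_1,\dots,b_n)$, $\bar z=(b_{n+1},\dots,b_{2n})$ into $c$. \HUGO's strategy is: set $b_0:=a_0$ and compute $v:=f^{(0)}(b_0)$; thereafter, whenever \TINO\ has just played a number $a$, let \HUGO\ reply with the least $t\geq a$ such that $f^{(j)}$, evaluated at $x$ followed by the $y$-part (respectively $z$-part) of his replies so far with $t$ appended, takes the value $v$, where $j$ counts the replies made in the current block including this one. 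Informally, \HUGO\ keeps pushing the value of $f^{(j)}$ down through the tower of limits until it reaches $v=f^{(0)}(x)$, once along the $\bar y$ coordinates and once along the $\bar z$ coordinates.

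The substantive point is that these unbounded searches terminate, and this is exactly where the limit hypotheses enter. By (external) induction on $j$ --- legitimate since $n$ is standard --- I would show that after \HUGO's $j$-th reply in the $y$-block one has $f^{(j)}(x,b_1,\dots,b_j)=v$. The base case $j=0$ is the definition of $v$. For the step, assume $f^{(j-1)}(x,b_1,\dots,b_{j-1})=v$; since $f^{(j-1)}(x,b_1,\dots,b_{j-1})=\lim_{t\to\infty}f^{(j)}(x,b_1,\dots,b_{j-1},t)$, we have $(\forall^\infty t)[f^{(j)}(x,b_1,\dots,b_{j-1},t)=v]$, so for \TINO's move $a$ there is some $t\geq a$ with $f^{(j)}(x,b_1,\dots,b_{j-1},t)=v$; hence \HUGO's search halts and his reply $b_j$ satisfies $f^{(j)}(x,b_1,\dots,b_j)=v$. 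After all $n$ rounds of the $y$-block we get $f^{(n)}(x,\bar y)=v$, and the identical argument on the $z$-block (same base case $f^{(0)}(x)=v$) gives $f^{(n)}(x,\bar z)=v$. Therefore $f^{(n)}(x,\bar y)=f^{(n)}(x,\bar z)$, so $c(x,\bar y,\bar z)=1$, which is \HUGO's announced color; \HUGO\ wins every play consistent with the strategy.

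It remains to check that this strategy is an object of the model. Its value on any partial play is recovered by replaying $b_0,b_1,\dots$ as above, which needs only finitely many evaluations of $f^{(0)},f^{(1)},\dots,f^{(n)}$ (there are $n+1$ of them and $n$ is standard) together with searches bounded below by \TINO's moves, all of which we have just shown to halt; so the strategy is computable from $f^{(0)}\oplus\cdots\oplus f^{(n)}$ and hence exists over $\RCA_0$. I expect the only genuine obstacle to be precisely this effectivity issue: \HUGO\ has in general no computable modulus of convergence for the limits $f^{(i-1)}=\lim f^{(i)}$, so a strategy that tried to name a witnessing threshold would not be a function of the model. The way around it is that \HUGO\ never needs such a threshold --- he needs only \emph{some} legal reply that restores the value $v$, and the limit hypothesis supplies cofinitely many of them, so a plain upward search finds one.
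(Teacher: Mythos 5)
Your proof is correct and is a fleshed-out version of the paper's one-line sketch (``\HUGO's strategy is to simply pick sufficiently large natural numbers with the value prescribed by the functions $f^{(i)}$''), identifying the same strategy and correctly observing that the unbounded searches it relies on always halt because of the limit hypotheses, so that the strategy is recursive in $f^{(0)}\oplus\cdots\oplus f^{(n)}$ and hence exists in the model. You also rightly note that the coloring should be read as the one associated to $f^{(n)}$ (the $n$-stable function whose full iterated limit is $f^{(0)}$), which is the only reading consistent with the signature used in Proposition~\ref{P:limgame}.
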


\begin{comment}
  When $f$ is merely weakly $n$-stable, all we can say is that
  $f_\infty$ is $\Delta^0_{2n}$-definable. I don't know if every
  $\Delta^0_{2n}$ function is the limit of some computable weakly
  $n$-stable function; this suggests that maybe
  yes. This would be of great interest in obtaining degree
  lower-bounds for Ramsey's Theorem.
\end{comment}

\noindent
\HUGO's strategy is to simply pick sufficiently large natural numbers
with the value prescribed by the functions $f^{(i)}$.

When $n=1$, Propositions~\ref{P:limgame} and~\ref{P:gamelim} are exact
converses for stable $f$. In general, these two propositions show that
every particular instance of $\Delta_{1+n}$-comprehension corresponds
to \HUGO\ having a winning strategy in a particular instance of the
game $\GINO_{1+2n}$.

\section{The Hyper-Weak Ramsey Theorem}\label{S:Forcing}

In the last section, we left open some of questions regarding the
various statements of indecomposability for $\omega^2$. Not too
surprisingly, these principles are closely related to Ramsey's Theorem
for pairs. We have shown in Corollary~\ref{C:weakgame} that
$\GIndec^2_k$ is equivalent to the principle $\wk\RT^2_k$ from the
introduction.  The other principle from the introduction, namely
$\hw\RT^2_k$, turns out to be a close relative of $\LIndec^2_k$. In
its general form, the hyper-weak Ramsey Theorem is as follows.

\begin{axiom}{$\hw\RT^n_k$}
  For every finite coloring $c:\NN^n\to\set{0,\dots,k-1}$, there are a
  color $d < k$ and an increasing function $h:\NN\to\NN$ such that,
  for all $0 < i_1 < \cdots < i_n$, the box
  \begin{equation*}
    [h(i_1-1),h(i_1)-1]\times\cdots\times [h(i_n-1),h(i_n)-1]
  \end{equation*}
  contains an $n$-tuple with color $d$.
\end{axiom}

\noindent
The main result of this section is the following.

\begin{theorem}\label{T:HWForcing}
  Every countable model of $\RCA_0 + \Ind\Sigma^0_{2}$ has an
  $\omega$-extension that satisfies $\RCA_0 + \Ind\Sigma^0_{2} +
  \hw\RT^{2}_2$.
\end{theorem}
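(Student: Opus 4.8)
The plan is to build the $\omega$-extension by a forcing construction, adding a sequence of generics that witness $\hw\RT^2_2$ for each coloring $c:\NN^2\to 2$ in the final model, while being careful that the forcing is tame enough to preserve $\RCA_0 + \Ind\Sigma^0_2$. Given a countable model $\MN = (\NN,\FN_1,\FN_2,\dots)$ of $\RCA_0 + \Ind\Sigma^0_2$, I would enumerate (using that $\MN$ is countable) all colorings $c:\NN^2\to 2$ that will appear in the eventual extension, together with, at each stage, the requirements that the increasing function $h$ we are building meets the combinatorial demand: for the target color $d$, every box $[h(i_1-1),h(i_1)-1]\times[h(i_2-1),h(i_2)-1]$ with $0<i_1<i_2$ contains a $d$-colored pair. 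The key observation that makes a \emph{single} $h$ (with a single color $d$) workable is a pigeonhole argument internal to $\NN$: one first passes to a color $d$ such that $\{y : c(x,y)=d\}$ is "cofinally often large," and then the conditions in the forcing will be finite increasing sequences $\langle h(0),\dots,h(m)\rangle$ together with the promise that they can be extended. Here I would lean on the fact (implicit in Corollary~\ref{C:weakgame} and the discussion of $\wk\RT^2_k$) that $\hw\RT^2_2$ is genuinely weaker: the box condition only asks for \emph{one} pair of color $d$ per box, so extending a condition by one more value $h(m+1)$ only requires finding a single $d$-colored pair in a rectangle whose left factor is already fixed — this is a $\Sigma^0_1$ (in fact bounded-search) task once $d$ is chosen, so density of the extension requirement is provable in the base theory augmented only by the existence of $d$, which in turn needs only $\Sigma^0_2$-bounding-type reasoning (available from $\Ind\Sigma^0_2$).

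The construction itself: I would iterate over $\omega$-many steps, at step $s$ handling the $s$-th coloring $c_s$ (of a set already added, or added at this step by a standard bookkeeping that closes the second-order part under $\Delta^0_1$-definability and the new generics). At each step choose the color $d_s$ via the pigeonhole/$\Sigma^0_2$ argument, then do an $\omega$-length sub-construction meeting the density requirements "extend the current finite approximation to $h_s$ so that the next box is filled," producing an increasing function $h_s$ in the limit. Take $\FN' $ to be generated by $\FN$ together with all the $h_s$ and close under the clone operations, primitive recursion, and uniformization. The resulting functional structure $\MN'$ is then checked to be a model of $\RCA_0$ (uniformization / $\Delta^0_1$-comprehension relative to the generics — the standard argument, since each $h_s$ is generic for a forcing whose conditions are finite strings) and of $\hw\RT^2_2$ by construction. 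Preservation of $\Ind\Sigma^0_2$ in $\MN'$ is the point where I would invoke that the forcing has the right definitional complexity: the forcing relation for arithmetic formulas over these finite-condition notions is not too complex, so an $\Sigma^0_2$ formula in $\MN'$ with generic parameters is "captured" by an $\Sigma^0_2$ (or slightly higher but still controlled) formula over $\MN$, and induction transfers — this is exactly the kind of conservation packaged in the paper's later claim that $\RCA_0 + \Ind\Sigma^0_2 + \hw\RT^2_2$ is $\Pi^1_1$-conservative over $\RCA_0 + \Ind\Sigma^0_2$, so one expects the model-theoretic version here to go through by the same forcing.

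The main obstacle is showing that the amalgamated model still satisfies $\Ind\Sigma^0_2$: adding a generic function can in principle destroy induction if the forcing is too expressive, so one must verify that the notion of forcing used (finite increasing sequences, with the density requirements driven only by the choice of $d_s$ and a bounded search) is "$\Sigma^0_2$-preserving" — concretely, that every $\Sigma^0_2$ fact about the generics is decided by a finite condition in a way uniformly definable over the ground model, so that a would-be failure of $\Sigma^0_2$-induction in $\MN'$ pulls back to a failure in $\MN$. A secondary technical point is the bookkeeping needed to ensure the second-order part of $\MN'$ is closed (we must handle colorings of sets that only enter the model because of earlier generics), which is the standard "handle requirements in a suitably interleaved order" device; and one must double-check that choosing a single color $d_s$ once and for all (rather than revising it) is legitimate, which follows because $\hw\RT^2_2$'s box condition is monotone enough that a fixed cofinally-good color suffices. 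Modulo these checks the theorem follows.
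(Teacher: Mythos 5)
Your overall architecture --- iterated finite-condition forcing with bookkeeping, watching out for $\Sigma^0_2$ preservation --- is the right shape, but several of the load-bearing steps are missing or wrong.

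First, the claim that ``extending a condition by one more value $h(m+1)$ only requires finding a single $d$-colored pair in a rectangle whose left factor is already fixed --- this is a $\Sigma^0_1$ (in fact bounded-search) task'' is incorrect. To be extendible, the \emph{new} block $[h(m),h(m+1)-1]$ must itself be good for all future boxes, i.e.\ $\bigcup_{x=h(m)}^{h(m+1)-1}\set{y : c(x,y)=d}$ must be infinite --- a $\Pi^0_2$ constraint on the chosen $h(m+1)$, not a bounded search. The paper bakes this constraint directly into the forcing poset: it forces with a $\Sigma^0_2$-definable \emph{subposet} $\PP'$ of all finite increasing sequences, where each block's color-$0$ union is required to be cofinite. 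You do not isolate any such poset, so density of your extension requirement is actually unavailable at the complexity you claim.

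Second, the dichotomy that makes the forcing poset dense is absent. The paper fixes a coloring $c$ and splits cases on whether there is already an increasing $h$ in the ground model making each block's color-$1$ union infinite (reformulating $\hw\RT^2_2$ via Proposition~\ref{P:HWEquiv}). If no such $h$ exists, \emph{this very failure} is what drives Lemma~\ref{L:S1Density}: $\Sigma^0_1$-density over $\PP$ transfers to $\PP'$. Your proposed pigeonhole (``pass to a color $d$ such that $\set{y : c(x,y)=d}$ is cofinally often large'') is a $\Pi^0_3/\Sigma^0_3$ fact about $c$; even granting it classically, it does not give you a $\Sigma^0_1$-definable poset to force with, and identifying the good $x$'s is itself $\Delta^0_3$ information not available in the ground model. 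Third, because the effective poset $\PP'$ is $\Sigma^0_2$-definable, the paper forces over a \emph{$\Sigma^0_2$-envelope} $\MN'$ of $\MN$ rather than over $\MN$ itself, and the preservation of $\Ind\Sigma^0_2$ is Proposition~\ref{P:Lowness}: the generic extension $\MN'[g]$ is again a $\Sigma^0_2$-envelope of $\MN[g]$. You flag preservation of $\Sigma^0_2$-induction as the main obstacle --- correctly --- but you offer no mechanism for it; the envelope-and-lowness machinery is the mechanism, and without it (or a substitute) the proof is not complete.
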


\noindent
It follows immediately that $\RCA_0 + \Ind\Sigma^0_2 + \hw\RT^{2}_2$
is $\Pi^1_1$-conservative over $\RCA_0 + \Ind\Sigma^0_2$.

The principle $\hw\RT^2_k$ can be reformulated as follows.

\begin{proposition}[$\RCA_0$]\label{P:HWEquiv}
  The principle $\hw\RT^2_k$ is equivalent to the following statement.
  \begin{principle} 
    For every finite coloring $c:\NN^2\to\set{0,\dots,k-1}$, there are
    a color $d < k$ and an increasing function $g:\NN\to\NN$ such that
    \begin{equation}\label{E:HWEquiv:hom}
      \bigcup_{x=g(i-1)}^{g(i)-1} \set{y \in \NN: c(x,y) = d}
    \end{equation}
    is infinite for every $i \geq 1$.
  \end{principle}
\end{proposition}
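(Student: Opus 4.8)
The plan is to prove the two directions of the equivalence separately, in each case extracting the function required by one formulation from the function supplied by the other. The link between the two statements is the elementary observation that $\hw\RT^2_k$ asserts that each \emph{rectangle} $[h(i_1-1),h(i_1)-1]\times[h(i_2-1),h(i_2)-1]$ (for $0<i_1<i_2$) contains a $d$-colored pair, whereas the reformulated principle asks that the \emph{column-union} in \eqref{E:HWEquiv:hom}, taken over the block $[g(i-1),g(i)-1]$, be infinite. So the content is that ``for every $i_2>i_1$ the $i_1$-block $\times$ $i_2$-block rectangle meets color $d$'' can be traded against ``the union of color-$d$ columns over the $i$-block is infinite.''

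First I would prove $\hw\RT^2_k \THEN$ reformulated principle. Given $c$, apply $\hw\RT^2_k$ to get a color $d$ and an increasing $h$. I claim $g:=h$ works with the same $d$. Fix $i\geq 1$; I must show $\bigcup_{x=g(i-1)}^{g(i)-1}\set{y:c(x,y)=d}$ is infinite. Work in $\RCA_0$; since this set is $\Delta^0_1$ it suffices to show it is unbounded. Given any $N$, choose $i_2$ with $h(i_2-1)>\max(N,h(i)-1)$, so that the interval $[h(i_2-1),h(i_2)-1]$ lies entirely above $N$ and above $g(i)-1$; then applying the $\hw\RT^2_k$ property to the pair $i<i_2$ yields a pair $(x,y)$ with $x\in[h(i-1),h(i)-1]=[g(i-1),g(i)-1]$, $y\in[h(i_2-1),h(i_2)-1]$, $c(x,y)=d$, hence $y>N$ and $y$ lies in the union. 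This shows the union is unbounded, hence infinite.

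Conversely, for reformulated principle $\THEN\hw\RT^2_k$, given $c$ apply the reformulation to obtain $d$ and an increasing $g$ with each block-union \eqref{E:HWEquiv:hom} infinite; I claim $h:=g$ and the same $d$ witness $\hw\RT^2_k$. Fix $0<i_1<i_2$. Since the union $\bigcup_{x=g(i_1-1)}^{g(i_1)-1}\set{y:c(x,y)=d}$ is infinite, it contains some $y\geq g(i_2-1)$; picking any such $y$ that is moreover $\leq g(i_2)-1$ requires a little more care — here I would note that the union being infinite means it is unbounded, so there is an $x\in[g(i_1-1),g(i_1)-1]$ and arbitrarily large $y$ with $c(x,y)=d$, and in particular some such $y$ in the nonempty interval $[g(i_2-1),g(i_2)-1]$; indeed, since the union is infinite it must meet every final segment, and by the pigeonhole bound on the finitely many columns $x\in[g(i_1-1),g(i_1)-1]$ one of them is itself unbounded, so that fixed column meets $[g(i_2-1),g(i_2)-1]$. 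That gives a $d$-colored pair in the required rectangle.

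The only delicate point — and the one I expect to be the main obstacle — is the last step: passing from ``a finite union of columns is infinite'' to ``\emph{some single one} of those columns is infinite,'' which is an instance of the finite pigeonhole principle for a bounded number of sets and is available already in $\RCA_0$ via $\EIndec^1_k$-style reasoning (equivalently, bounded $\Sigma^0_1$ comprehension), since the number of columns in a block is a fixed element of $\NN$, not a standard integer; one must make sure the argument uses only $\Delta^0_1$ comprehension and $\Ind\Sigma^0_1$, both present in $\RCA_0$. Once that is in hand, the rest is bookkeeping with intervals, and taking $g=h$ (resp.\ $h=g$) throughout makes the translation transparent.
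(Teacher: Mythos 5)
The forward direction of your proof is fine and is exactly what the paper treats as ``clear'': given $h$ from $\hw\RT^2_k$, each block-union is unbounded because for any $N$ you can pick $i_2$ with $h(i_2-1)>N$ and use the rectangle $[h(i-1),h(i)-1]\times[h(i_2-1),h(i_2)-1]$.

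The converse, however, has a genuine gap, and you have misidentified where the difficulty lies. You claim that $h:=g$ works, and you locate the ``delicate point'' at the step of extracting a single infinite column from an infinite finite union. That step is indeed available in $\RCA_0$, but it does not help: even a \emph{single} infinite column need not meet every interval $[g(i_2-1),g(i_2)-1]$. Infinite just means unbounded; a column such as $\set{2^{2^k}:k\geq 1}$ is infinite but misses almost all $g$-blocks if, say, $g$ is the identity. Concretely, define $c(x,y)=d$ iff $y\in\set{2^{2^{kx}}:k\geq 1}$ and take $g(i)=i+1$; then every block-union \eqref{E:HWEquiv:hom} is infinite, yet with $h=g$ the rectangle $\set{1}\times\set{3}$ (from $i_1=1$, $i_2=3$) contains no $d$-pair. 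So the reformulation can hold with this $g$ while $h:=g$ fails $\hw\RT^2_k$. The correct move --- and what the paper does --- is to \emph{thin} $g$: for each $i$ let $f_i(z)$ be the least $y\geq z$ for which some $x\in[g(i-1),g(i)-1]$ has $c(x,y)=d$ (total because the block-union is unbounded), then recursively pick $0<i_0<i_1<\cdots$ so that $g(i_{\ell+1})$ exceeds $f_{i_0}(g(i_\ell)),\dots,f_{i_\ell}(g(i_\ell))$, and set $h(\ell)=g(i_\ell)$. This guarantees that for $\ell_1<\ell_2$ the witness produced by $f_{i_{\ell_1}}$ at input $g(i_{\ell_2-1})$ lands inside $[h(\ell_2-1),h(\ell_2)-1]$. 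Without some such thinning step the converse direction does not go through.
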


\begin{proof}
  The fact that $\hw\RT^2_k$ implies this statement is clear. For the
  converse, let $d < k$ and $g:\NN\to\NN$ be as in the statement. For
  each $i \geq 1$ let $f_i(z)$ be the first $y \geq z$ such that
  $c(x,y) = d$ for some $g(i-1) \leq x \leq g(i)-1$. Define the
  sequence $0 < i_0 < i_1 < i_2 < \cdots$ so that $g(i_{\ell+1}) >
  f_{i_0}(g(i_\ell)),\dots,f_{i_\ell}(g(i_\ell))$ for each
  $\ell$. Then $h(\ell) = g(i_\ell)$ is as required.
\end{proof}

\noindent
If $c:\NN^2\to\set{0,\dots,k-1}$ is a coloring and $h:\NN^2\to\NN^2$
is a lexicographic embedding such that $c \circ h$ is constant with
value $d < k$, then the first coordinate function $g(i) = h_1(i,0)$ is
such that each of the sets~\eqref{E:HWEquiv:hom} is infinite.

\begin{corollary}[$\RCA_0$]
  $\LIndec_k^2$ implies $\hw\RT^2_k$.
\end{corollary}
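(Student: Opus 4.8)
The plan is to unwind the reformulation of $\hw\RT^2_k$ furnished by Proposition~\ref{P:HWEquiv} and feed it a witness extracted directly from $\LIndec_k^2$. Concretely, working in $\RCA_0$ and assuming $\LIndec_k^2$, one starts with an arbitrary coloring $c:\NN^2\to\set{0,\dots,k-1}$, applies $\LIndec_k^2$ to obtain a lexicographic embedding $h:\NN^2\to\NN^2$ and a color $d<k$ with $c\circ h$ constant of value $d$, and then proposes $g(i)=h_1(i,0)$ as the increasing function demanded by Proposition~\ref{P:HWEquiv}. This is precisely the function singled out in the remark preceding the corollary; the remaining work is just to verify that it does the job.

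There are two things to check. First, that $g$ is increasing: this is immediate from Lemma~\ref{L:lexfst}, which gives $g(i)=h_1(i,0)<h_1(i+1,0)=g(i+1)$. Second, that for each $i\geq1$ the set $\bigcup_{x=g(i-1)}^{g(i)-1}\set{y\in\NN:c(x,y)=d}$ is infinite. For this I would fix $i\geq1$ and observe, again by Lemma~\ref{L:lexfst}, that $g(i-1)=h_1(i-1,0)\leq h_1(i-1,y)<h_1(i,0)=g(i)$ for every $y$, so the first coordinate $h_1(i-1,y)$ always lands in the interval $[g(i-1),g(i)-1]$. Since $c(h_1(i-1,y),h_2(i-1,y))=d$, the second coordinate $h_2(i-1,y)$ then belongs to the displayed union for every $y$. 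Finally, Lemma~\ref{L:lexinf} (in the case $n=2$, $i=2$) gives $\lim_{y\to\infty}h_2(i-1,y)=\infty$, so the values $h_2(i-1,y)$ form an infinite subset of that union. An appeal to Proposition~\ref{P:HWEquiv} then completes the argument.

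The main obstacle here is essentially nonexistent: everything reduces to the two lemmas about lexicographic embeddings already proved and the reformulation already in hand. The only points that need mild care are the bookkeeping with the index shift in Proposition~\ref{P:HWEquiv} (the union runs over $x\in[g(i-1),g(i)-1]$, so one must feed in $h_1(i-1,y)$ rather than $h_1(i,y)$) and the routine check that none of these steps requires induction beyond what $\RCA_0$ supplies.
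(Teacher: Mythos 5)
Your proposal is correct and follows exactly the route the paper takes: the paper records, in the remark immediately preceding the corollary, that the first coordinate function $g(i)=h_1(i,0)$ witnesses the reformulation in Proposition~\ref{P:HWEquiv}, and your argument simply spells out that verification using Lemmas~\ref{L:lexfst} and~\ref{L:lexinf}.
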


The principle $\hw\RT^2_2$ is also related to the principle
$\ADS$ of Hirschfeldt and Shore~\cite{HirschfeldtShore}. A coloring
$c:[\NN]^2\to\set{0,\ldots,k-1}$ is \emph{transitive} if, for all $x <
y < z$, if $c(x,y) = c(y,z)$ then $c(x,z) = c(x,y) = c(y,z)$.

\begin{axiom}{$\ADS$}
  Every transitive coloring $c:[\NN]^2\to\set{0,1}$ has an infinite
  homogeneous set.
\end{axiom}

\noindent
For every transitive coloring $c:[\NN]^2\to\set{0,1}$ there is a
unique linear ordering ${\prec}$ such that, for all $x < y$,
\begin{equation*}
  c(x,y) = 
  \begin{cases}
    0 & \text{when $x \succ y$,} \\
    1 & \text{when $x \prec y$.}
  \end{cases}
\end{equation*}
Thus $\ADS$ is equivalent to the statement that every linear ordering
of $\NN$ has an infinite ascending or descending sequence, hence the
name. The principle $\st\ADS$ is the restriction of $\ADS$ to stable
transitive colorings.

\begin{proposition}[$\RCA_0$]
  $\hw\RT^2_2$ implies $\st\ADS$.
\end{proposition}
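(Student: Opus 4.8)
The plan is to extract from a single application of $\hw\RT^2_2$ either an infinite $\prec$-ascending sequence or an infinite $\prec$-descending sequence, where $\prec$ is the (stable, $c$-computable) linear ordering of $\NN$ associated with the given stable transitive coloring $c:[\NN]^2\to\set{0,1}$ as described just above; stability of $c$ says exactly that each point is eventually $\prec$-below, or eventually $\prec$-above, all sufficiently large numbers. First I would extend $c$ to $\hat c:\NN^2\to\set{0,1}$ (say $\hat c(x,y)=c(x,y)$ for $x<y$ and $\hat c(x,y)=0$ otherwise) and apply $\hw\RT^2_2$ to $\hat c$, obtaining a color $d<2$ and an increasing $h:\NN\to\NN$ such that, for all $0<i_1<i_2$, the box $[h(i_1-1),h(i_1)-1]\times[h(i_2-1),h(i_2)-1]$ contains a pair of color $d$. (Here $h$ is necessarily strictly increasing, since otherwise a box would be empty.) Writing $J_i=[h(i-1),h(i)-1]$, the intervals $J_i$ for $i\ge1$ are disjoint, nonempty, consecutive, with $\min J_i\to\infty$; and whenever $x\in J_{i_1}$, $y\in J_{i_2}$ with $i_1<i_2$ we automatically have $x<y$, so $\hat c(x,y)=c(x,y)$ and $c(x,y)=1$ iff $x\prec y$.

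Suppose first that $d=1$; I claim there is an infinite set homogeneous for color $1$. Let $b_i$ be the $\prec$-least element of $J_i$. For any $j>i$, the box condition applied to the index pair $(i,j)$ produces $x\in J_i$ and $y\in J_j$ with $x\prec y$, whence $b_i\preceq x\prec y$ and so $b_i\prec y$; letting $j$ be arbitrarily large shows that $\set{z:b_i\prec z}$ is infinite. Stability of $c$ guarantees that $\lim_z[b_i\prec z]$ exists, and since it equals $1$ infinitely often it is eventually $1$, so $\set{z:z\preceq b_i}$ is finite (indeed bounded). Since $m\mapsto b_m$ is injective (the $J_m$ are pairwise disjoint) and $b_m\ge h(m-1)\ge m-1$, this forces $\set{m:b_m\preceq b_i}$ to be finite; hence for every $i$ there is an $m>i$ with $b_m\succ b_i$.

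Next I would thin out $\seq{b_i}$: let $F(i)$ be the least $m>i$ with $b_m\succ b_i$ --- a total function by the previous step, hence available in $\RCA_0$ --- put $j_0=1$ and $j_{\ell+1}=F(j_\ell)$ by primitive recursion, and set $H=\set{b_{j_\ell}:\ell\in\NN}$. The $j_\ell$ are strictly increasing, so for $\ell<\ell'$ the intervals $J_{j_\ell}$ and $J_{j_{\ell'}}$ are disjoint with $J_{j_\ell}$ to the left, giving $b_{j_\ell}<b_{j_{\ell'}}$ in $\NN$; and $b_{j_\ell}\prec b_{j_{\ell'}}$ by transitivity of the chain of $\prec$-steps guaranteed by $F$. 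Hence $c(b_{j_\ell},b_{j_{\ell'}})=1$ for all $\ell<\ell'$, so $H$ is an infinite homogeneous set. The case $d=0$ is symmetric: apply the $d=1$ argument to the coloring $c'(x,y)=1-c(x,y)$, which is the stable transitive coloring of the reverse ordering $\prec^{\mathrm{op}}$ and for which the same pair $(1,h)$ is a valid output of $\hw\RT^2_2$; a set homogeneous for $c'$-color $1$ is homogeneous for $c$-color $0$.

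The one delicate point --- and the only place where anything beyond $\RCA_0$ is used --- is the passage from ``$b_i$ has infinitely many $\prec$-successors'' to ``$b_i$ has only finitely many $\prec$-predecessors'', which is precisely where the stability hypothesis is consumed. What makes the subsequent extraction legitimate over $\RCA_0$ is that we never need to locate where the limit $\lim_z c(b_i,z)$ stabilizes (which is genuinely noncomputable); we only need the bare $\Sigma$-statement $\forall i\,\exists m>i\,(b_m\succ b_i)$, which follows from the finiteness of $\set{z:z\preceq b_i}$, so the search defining $F(i)$ is provably terminating even though no bound on it is available in advance. Given that, $F$, the diagonal sequence $\seq{j_\ell}$, and the set $H$ all exist by the usual recursion-theoretic bookkeeping available in $\RCA_0$.
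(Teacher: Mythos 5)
Your proposal is correct and follows essentially the same strategy as the paper: take the $\prec$-least element of each interval $J_i$, use the box condition to show it has unboundedly many $\prec$-successors, invoke stability to upgrade this to ``all but finitely many,'' and then greedily thin to a $\prec$-ascending sequence, which is $c$-homogeneous by transitivity. The only differences are presentational --- the paper folds the two colors into one case by defining $\prec$ to agree with whichever color $d$ was returned, where you run $d=1$ explicitly and reduce $d=0$ to it by complementing the coloring, and you spell out the $\RCA_0$-bookkeeping for the function $F$ and the recursion defining $\seq{j_\ell}$ that the paper leaves implicit --- so no further comment is needed.
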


\begin{proof}
  Let $c:[\NN]^2\to\set{0,1}$ be a stable transitive coloring. By
  $\hw\RT^2_2$, there is are a color an increasing function
  $h:\NN\to\NN$ such that, for all $0 < i < j$, the
  rectangle \[[h(i-1),h(i)-1]\times[h(j-1),h(j)-1]\] contains a pair
  with color $d$. Let ${\prec}$ be the linear ordering of $\NN$ which
  agrees with color $d$, and for each $i$ let $m(i)$ be the
  ${\prec}$-minimal element of the interval $[h(i-1),h(i)-1]$. Note
  that $m(i)$ is necessarily ${\prec}$-below all but finitely elements
  of $\NN$. Therefore, we can define the sequence $i_0 = 0 < i_1 < i_2
  < \cdots$ so that each $i_{k+1}$ is the least $i > i_k$ such that
  $c(m(i_k),m(i)) = d$. By transitivity, the sequence
  $\set{m(i_k)}_{k=0}^\infty$ is an infinite $c$-homogeneous set.
\end{proof}

\noindent
It was shown by Chong, Lempp, and Yang~\cite{ChongLemppYang} that
$\st\ADS$ implies $\Bnd\Pi^0_1$.

\begin{corollary}[$\RCA_0$]
  $\hw\RT^2_2$ implies $\Bnd\Pi^0_1$.
\end{corollary}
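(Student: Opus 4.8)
The plan is to derive this corollary by composing two implications that are each available over $\RCA_0$. The Proposition immediately above shows that $\hw\RT^2_2$ implies $\st\ADS$, and the theorem of Chong, Lempp, and Yang~\cite{ChongLemppYang} quoted just before the corollary shows that $\st\ADS$ implies $\Bnd\Pi^0_1$. Since implication is transitive and both steps are formalized over the base theory, it follows that $\RCA_0 + \hw\RT^2_2 \proves \Bnd\Pi^0_1$. Concretely, working in an arbitrary model of $\RCA_0 + \hw\RT^2_2$, I would first invoke the preceding Proposition to conclude that every stable transitive $2$-coloring of pairs in the model has an infinite homogeneous set, and then apply the Chong--Lempp--Yang argument to obtain $\Bnd\Pi^0_1$ in that model.

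There is essentially no obstacle here, since all the work has already been done in the two results being chained. The only point worth a line of commentary is that the Chong--Lempp--Yang proof that $\st\ADS$ implies $\Bnd\Pi^0_1$ is itself carried out over $\RCA_0$ and does not require additional induction such as $\Ind\Sigma^0_2$; this is what licenses stating the corollary over $\RCA_0$ rather than a stronger base theory.

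If a self-contained argument were wanted instead, one could in principle unfold the proof of the preceding Proposition together with the Chong--Lempp--Yang construction: from a $\Pi^0_1$-indexed family witnessing a failure of bounding one builds a stable transitive coloring whose associated linear order admits no infinite ascending or descending sequence obtainable from the available data, and then derives a contradiction from the homogeneous set furnished by $\hw\RT^2_2$. Since the two cited facts already package all of this, I would simply record the corollary as their formal consequence, pointing back to the Proposition and to \cite{ChongLemppYang}.
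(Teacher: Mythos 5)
Your proof is correct and is exactly the intended argument: the corollary is placed immediately after the proposition that $\hw\RT^2_2$ implies $\st\ADS$ and the citation of Chong--Lempp--Yang that $\st\ADS$ implies $\Bnd\Pi^0_1$, and the paper leaves the composition implicit. Nothing to add.
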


\begin{comment}
  Below, I will also implicitly consider the following aymmetric
  strengthening of $\hw\RT^2_2$.

  \begin{axiom}{$\hw\RT^2_{2+}$}
    For every finite coloring $c:\NN^2\to\set{0,1}$, there is an
    increasing function $h:\NN\to\NN$ such that either
    \begin{equation*}
      \bigcup_{x=h(i-1)}^{h(i)-1} \set{y \in \NN: c(x,y) = 1}
    \end{equation*}
    is infinite for every $i \geq 1$, or else
    \begin{equation*}
      \bigcup_{x=h(i-1)}^{h(i)-1} \set{y \in \NN: c(x,y) = 0}
    \end{equation*}
    is cofinite for every $i \geq 1$.
  \end{axiom}

  \noindent
  I don't think this is actually stronger than $\hw\RT^2_2$, but I
  don't have a proof.
\end{comment}

\subsection{Forcing Preliminaries}

The forcings we will be interested in are forcings with finite
conditions. That is, our poset $\QQ$ of forcing conditions is a
$\Sigma^0_1$-definable set and so are the order relation ${\leq}$ and
the incompatibility relation ${\perp}$. We will first develop the
general theory of such forcings before we deal with actual examples to
prove Theorem~\ref{T:HWForcing}.

Our approach to forcing follows that of Dorais~\cite{Dorais}. In particular, we work within the functional
interpretation of $\RCA_0$ presented in the introduction. For the rest
of this section we show how to adapt the forcing machinery
of~\cite{Dorais} to forcings with finite conditions.

We first develop the basic machinery necessary to define the internal
forcing language. The base level of this are the forcing names, which
are the terms of the forcing language. A \emph{partial $k$-ary name}
is a $\Sigma^0_1$-definable set $F \subseteq \QQ\times\NN^{k+1}$ (with
ground model parameters) such that:
\begin{itemize}
\item If $(p,\bar{x},y) \in F$ and $q \leq p$ then $(q,\bar{x},y) \in
  F$.
\item If $(p,\bar{x},y), (p,\bar{x},y') \in F$ then $y = y'$.
\end{itemize}
We say that $F$ is a \emph{$p$-local} if for every $q \leq p$ and
every $\bar{x} \in \NN$ there are $y \in \NN$ and $r \leq q$ such that
$(r,\bar{x},y) \in F$.

Before we discuss the syntax of the forcing language, we will discuss
the semantics of these names. A filter $G \subseteq \QQ$ is
\emph{$\Pi^0_n$-generic} (over $\MN$) if for every set $D \subseteq
\QQ$ which is $\Pi^0_n$ definable over $\MN$, there is a $p \in G$
such that either $p \in D$ or else $p$ has no extension in $D$ at all.

If $G$ is $\Pi^0_1$-generic and $F$ is a $p$-local $k$-ary name for
some $p \in G$, then the \emph{evaluation} $F^G$ is the total $k$-ary
function defined by
\begin{equation*}
  F^G(\bar{x}) = y \quad\IFF\quad (\exists q \in G)((q,\bar{x},y) \in F).
\end{equation*}
The basic projections, constants, and indeed all ground model
functions $F$ have \emph{canonical names} $\check{F}$ defined by
\begin{equation*}
  (p,\bar{x},y) \in \check{F} \quad\IFF\quad y = F(\bar{x}),
\end{equation*} 
which invariably evaluate to $F$. The \emph{generic extension}
$\MN[G]$ is the $\omega$-extension of $\MN$ whose functions consist of
the evaluations of all names that are $p$-local for some $p \in G$.

In a typical language, the basic terms are composed to form the class
of all terms. This is not so for the forcing language since
composition and other operations can be done directly at the semantic
level. If $F$ is a partial $\ell$-ary name and $F_1,\dots,F_\ell$ and
are partial $k$-ary names then the superposition $H = F \circ
(F_1,\dots,F_\ell)$ is defined by \[(p,\bar{x},z) \in H \IFF \exists
\bar{y}\,((p,\bar{x},y_1) \in F_1 \land\cdots\land (p,\bar{x},y_\ell)
\in F_\ell \land (p,\bar{y},z) \in F).\] This is a partial $k$-ary
name and if each of $F, F_1,\dots,F_\ell$ is $p$-local, then so is
$H$. Primitive recursion can be handled in a similar way. Given
partial a $(k-1)$-ary name $F_0$ and a $(k+1)$-ary name $F$, we the
$k$-ary name $H$ is defined by $(p,\bar{x},y,z) \in H$ iff there is a
finite sequence $\seq{z_0,\dots,z_y}$ with $z = z_y$ such that
$(p,\bar{x},z_0) \in F_0$ and $(p,\bar{x},i,z_i,z_{i+1}) \in F$ for
every $i < y$. This is again a partial $k$-ary name and if $F_0, F$
are $p$-local then so is $H$. Other recursive operations can be
handled via Proposition~\ref{P:GenUnif}.

The \emph{formulas of the forcing language} are defined in the usual
manner as the smallest family which is closed under the following
formation rules.
\begin{itemize}
\item If $F$ is a partial $k$-ary name, $ F'$ is a partial $k'$-ary
  name, and $\bar{v} = v_1,\dots,v_k$, $\bar{v}' = v_1',\dots,v_{k'}'$
  are variable symbols then $(F(\bar{v}) = F'(\bar{v}'))$ is a
  formula.
\item If $\phi$ is a formula then so is $\lnot\phi$.
\item If $\phi$ and $\psi$ are formulas then so is $(\phi\land\psi)$.
\item If $\phi$ is a formula and $v$ is a variable symbol, then
  $(\forall v)\psi$ is also a formula.
\end{itemize}
Free and bound variables are defined in the usual manner. The
\emph{sentences} of the forcing language are formulas without free
variables.  Although not present in the formal language, we will
freely use $\lor$, $\lthen$, $\liff$ and $\exists$ as abbreviations:
\[\begin{array}{c@{\qquad}c}
  ( \phi \lor \psi) \equiv \lnot(\lnot\phi\land\lnot\psi), &
  (\phi \lthen \psi) \equiv \lnot(\phi\land\lnot\psi), \\
  (\phi \liff \psi) \equiv (\phi\lthen\psi)\land(\psi\lthen\phi), &
  (\exists v)\phi \equiv \lnot(\forall v)\lnot\phi.
\end{array}\]
Because our language lacks ${\leq}$, bounded quantifiers are defined
by \[(\forall v \leq F)\phi \equiv (\forall v)(v
\mathop{\dot{\smash{-}}} F = 0 \lthen \phi), \quad (\exists v \leq
F)\phi \equiv (\exists v)(v \mathop{\dot{\smash{-}}} F = 0 \land
\phi).\]
Bounded formulas are those whose quantifiers are all of this form,
where the name $F$ does not depend on the quantified variable $v$.
The usual arithmetic hierarchy is then built from these in the usual
manner by alternation of quantifiers.

We are now ready to define the forcing relation. The definition for
atomic sentences is motivated by the above definition of the forcing
extension. The remaining cases follow the classical definition of
forcing. The definition of $p \forces \theta$ is by induction on the
complexity of the sentence $\theta$. Assume all names that occur in
sentences below are $p$-local.
\begin{itemize}
\item $p \forces (F = F')$ iff, for all $q \leq p$ and $y, y' \in
  \NN$, if $(q,y) \in F$ and $(q,y') \in F'$ then $y = y'$.
\item $p \forces (\phi\land\psi)$ iff $p \forces \phi$ and $p \forces
  \psi$.
\item $p \forces (\forall v)\phi(v)$ iff $p\forces \phi(\check{x})$,
  for all $x \in \NN$.
\item $p \forces \lnot\phi$ iff there is no $q \leq p$ such that $q
  \forces \phi$.
\end{itemize}
The meaning of the forcing relation for the abbreviations defined
above can be computed as usual.
\begin{itemize}
\item $p \forces (F \neq F')$ iff, for all $q \leq p$ and $y,y' \in
  \NN$, if $(q,y) \in F$ and $(q,y') \in F'$ then $y \neq y'$.
\item $p \forces (\phi\lor\psi)$ iff for every $q \leq p$ there is a
  $r \leq q$ such that either $r \forces \phi$ or $r \forces \psi$.
\item $p \forces (\phi\lthen\psi)$ iff for every $q \leq p$ such that
  $q \forces \phi$, there is a $r \leq q$ such that $r \forces \psi$.
\item $p \forces (\exists v)\phi(v)$ iff for every $q \leq p$ there
  are a $r \leq q$ and a $x \in \NN$ such that $r \forces
  \phi(\check{x})$.
\end{itemize}
Moreover, this is a classical forcing:
\begin{itemize}
\item $p \forces \lnot\lnot\phi$ iff $p \forces \phi$.
\end{itemize}

\begin{lemma}\label{L:BoundedForcing}
  For every bounded formula $\phi(\bar{v})$ of the forcing language,
  there is a partial name $T_\phi(\bar{v})$ such that $\phi(\bar{v})$
  is $p$-local if and only if $T_\phi(\bar{v})$ is $p$-local, and then
  $p \forces (\forall\bar{v})[\phi(\bar{v}) \liff T_\phi(\bar{v}) =
  0].$
\end{lemma}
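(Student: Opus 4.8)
The statement is a standard "bounded formulas reduce to names" lemma, so I would prove it by induction on the structure of the bounded formula $\phi$, building the term name $T_\phi$ simultaneously with a verification that $p \forces (\forall \bar v)[\phi(\bar v) \liff T_\phi(\bar v) = 0]$. Throughout I would exploit the fact that this is a classical forcing with finite conditions (so $\Sigma^0_1$-definability of $\QQ$, ${\leq}$, ${\perp}$ is available) and that $p$-locality is preserved by the name-forming operations (superposition, primitive recursion) already set up in the excerpt.

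\textbf{Base case.} For an atomic formula $(F(\bar v) = F'(\bar v'))$, I would set $T_\phi$ to be the name for the characteristic function of the complement of equality: using truncated subtraction already available in the functional system, let $T_\phi(\bar v,\bar v')$ evaluate to $(F(\bar v) \mathop{\dot{\smash{-}}} F'(\bar v')) + (F'(\bar v') \mathop{\dot{\smash{-}}} F(\bar v))$, which is a partial name obtained by superposition from $F$, $F'$ and the (canonical names of the) ground-model arithmetic operations; it is $p$-local exactly when $F$ and $F'$ are. The equivalence $p \forces (F(\bar v) = F'(\bar v') \liff T_\phi(\bar v,\bar v') = 0)$ then unwinds directly from the definition of $p\forces(F=F')$, the definition of evaluation, and the dichotomy axiom.

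\textbf{Inductive steps.} For $\lnot\phi$, I would take $T_{\lnot\phi}$ to be the name for $1 \mathop{\dot{\smash{-}}} T_\phi$; since forcing here is classical, $p\forces\lnot\phi \liff \lnot(T_\phi = 0) \liff (1 \mathop{\dot{\smash{-}}} T_\phi = 0)$ works after checking that $p\forces\lnot(T_\phi=0)$ is equivalent to $p \forces (1 \mathop{\dot{\smash{-}}} T_\phi = 0)$ via density. For $\phi \land \psi$, let $T_{\phi\land\psi}$ be the name for $T_\phi + T_\psi$ (or $\max(T_\phi,T_\psi)$). For the bounded quantifier $(\forall v \leq F)\phi(v,\bar w)$ — recall $F$ does not depend on $v$ — I would define $T_{(\forall v\leq F)\phi}(\bar w)$ by primitive recursion along the evaluation of $F(\bar w)$: it accumulates $\sum_{v \leq F(\bar w)} T_\phi(v,\bar w)$, a construction legitimized by the primitive-recursion name operation in the excerpt, and $p$-local when $F$ and $T_\phi$ are. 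The corresponding equivalence reduces to the definition $p\forces(\forall v)\chi(v) \liff \forall x\, p\forces\chi(\check x)$ restricted appropriately, again using density and the classicality of the forcing.

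\textbf{Main obstacle.} The delicate point is not any single inductive step in isolation but keeping the double bookkeeping honest: at each stage I must simultaneously verify (i) the $p$-locality equivalence "$\phi(\bar v)$ is $p$-local $\iff$ $T_\phi(\bar v)$ is $p$-local" — where the left side means all names occurring in $\phi$ are $p$-local, the convention stated just before the lemma — and (ii) the forced equivalence, and these interact: the forced equivalence for $\lnot\phi$ relies on $T_\phi$ being $p$-local so that $T_\phi = 0$ is a legitimate atomic sentence, and relies on classicality ($p \forces \lnot\lnot\theta \liff p\forces\theta$) to collapse the double negation introduced by the complement trick. I would isolate, as a small auxiliary claim, the statement that for any $p$-local name $T$ and any $q \leq p$, $q \forces (T = 0)$ iff $T^{G}(\ ) = 0$ for every $\Pi^0_1$-generic $G \ni q$, equivalently iff no $r \leq q$ forces $T \neq 0$; with that in hand the inductive verification of (ii) becomes a routine unwinding at every node of the formula tree. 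Since the formula is bounded and — crucially — the number $n$ of quantifier alternations (here, the nesting depth) is finite and standard, this external induction on formula complexity causes no metamathematical difficulty.
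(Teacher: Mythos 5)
Your proposal is correct and takes essentially the same route as the paper: an induction on formula complexity, with the identical defining clauses ($T_{F=F'}=|F-F'|$ in the form of the sum of truncated differences, $T_{\lnot\phi}=1\mathop{\dot{\smash{-}}}T_\phi$, $T_{\phi\land\psi}=T_\phi+T_\psi$, $T_{(\forall w\leq F)\phi}=\sum_{w\leq F}T_\phi$) and the same observation that $p$-locality is preserved because partial names are closed under superposition and primitive recursion. You spend more words verifying the forced equivalences (which the paper leaves implicit), but the underlying argument is the same.
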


\begin{proof}
  We define $T_\phi(\bar{v})$ by induction on the complexity of $\phi(\bar{v})$.
  \begin{itemize} 
  \item $T_{F=F'}(\bar{v})=|F(\bar{v})-F'(\bar{v})|$.
  \item $T_{\neg\phi}(\bar{v})=1 \mathop{\dot{\smash{-}}} T_\phi(\bar{v})$.
  \item $T_{\phi\land\psi}(\bar{v})=T_\varphi(\bar{v})+T_\psi(\bar{v})$.
  \item $T_{(\forall w\leq F)\phi}(\bar{v}) =\sum_{w\leq F(\bar{v})} T_{\phi}(\bar{v},w)$.
  \end{itemize}
  The fact that $T_\phi(\bar{v})$ is $p$-local when $\phi$ is $p$-local
  follows from the fact that $p$-local names are closed under
  superposition and primitive recursion.
\end{proof}

The following fact is then easy to check by induction on $n$.

\begin{proposition}[$\RCA_0$; $1 \leq n < \omega$]\label{P:DefForcing}
  If $\theta(v_1,\dots,v_k)$ is a $p$-local $\Pi^0_n$-formula of the
  forcing language then the relation $p \forces
  \theta(\check{x}_1,\dots,\check{x}_k)$ is $\Pi^0_n$, uniformly in
  the parameter $p$.
\end{proposition}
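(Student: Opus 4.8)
The plan is to argue by (external) induction on $n$, simply following the four clauses that define $p \forces \theta$ and tracking how each one affects arithmetic complexity. Since the only parameters that enter are $p$, the ground-model parameters occurring in the names of $\theta$, and (for a fixed standard $n$) the formula $\theta$ itself --- all of which are permitted in a $\Pi^0_n$ definition --- the uniformity in $p$ will come for free once the bound is established. Throughout we may put $\theta$ into prenex normal form, since $\forces$ is a classical forcing and is therefore invariant under classical logical equivalence.

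For the base case $n=1$, the point is that a $\Pi^0_1$ formula of the forcing language reduces, via the clause for $(\forall v)$, to a bounded formula: writing $\theta$ as $(\forall\bar w)\phi_0(\bar w)$ with $\phi_0$ bounded, we have $p \forces \theta(\check{\bar x})$ iff $p \forces \phi_0(\check{\bar x},\check{\bar y})$ for every $\bar y$. Here Lemma~\ref{L:BoundedForcing} does the essential work: it lets me replace $\phi_0$ by the atomic formula $T_{\phi_0} = 0$, so that by the atomic clause
\[
  p \forces \phi_0(\check{\bar x},\check{\bar y}) \quad\IFF\quad (\forall q \leq p)(\forall z)\,[\, (q,\bar x,\bar y,z) \in T_{\phi_0} \to z = 0\,].
\]
Since $T_{\phi_0}$ is a $\Sigma^0_1$-definable name and ``$q \leq p$'' is $\Sigma^0_1$, the matrix is an implication with $\Sigma^0_1$ hypothesis and decidable conclusion, hence $\Pi^0_1$; the universal quantifiers preserve $\Pi^0_1$, and so does the outer $(\forall\bar y)$. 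It is exactly this appeal to Lemma~\ref{L:BoundedForcing} that keeps the bounded part of $\theta$ from injecting spurious quantifier alternations.

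For the inductive step, write a $\Pi^0_{n+1}$ formula, in prenex form, as $(\forall\bar w)\lnot\chi(\bar w)$ with $\chi$ a $\Pi^0_n$ formula. Applying the clauses for $(\forall v)$ and for $\lnot$ in turn,
\[
  p \forces (\forall\bar w)\lnot\chi(\check{\bar x},\bar w) \quad\IFF\quad (\forall\bar y)\,\lnot\,(\exists q)\,[\, q \leq p \;\land\; q \forces \chi(\check{\bar x},\check{\bar y})\,].
\]
By the induction hypothesis, ``$q \forces \chi(\check{\bar x},\check{\bar y})$'' is $\Pi^0_n$ uniformly in $q,\bar x,\bar y$; conjoining the $\Sigma^0_1$ condition ``$q \leq p$'' and applying $(\exists q)$ yields a $\Sigma^0_{n+1}$ relation, whose negation is $\Pi^0_{n+1}$, and prefixing $(\forall\bar y)$ leaves it $\Pi^0_{n+1}$. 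Conjunctions are handled directly by the clause for $\land$, and $\lor$, $\lthen$, $\exists$ are only abbreviations, so no further cases arise; this closes the induction.

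The single place where care is needed --- and the one I expect to be the real (if modest) obstacle --- is the bookkeeping in the inductive step: one must be careful \emph{not} to expand the existential quantifier of $\theta$ into its ``dense below $p$'' characterization, which would introduce an extra alternation ``$(\forall q)(\exists r)$'' and overshoot the bound. Instead one peels off a single universal block and a single negation at a time, treating the inner relation $q \forces \chi$ as a black box handed down by the induction hypothesis. Organised this way, each quantifier block of the prenex form of $\theta$ contributes exactly one block to the definition of $p \forces \theta$, and the claimed $\Pi^0_n$ bound drops out.
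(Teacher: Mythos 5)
Your proof is correct and follows the same route the paper takes: reduce the bounded kernel to a name via Lemma~\ref{L:BoundedForcing} to make the $\Pi^0_1$ base case transparent, then peel one universal block and one negation per step, using the clauses for $\forall$ and $\lnot$ directly to get $(\forall\bar y)\lnot(\exists q\leq p)[q\forces\chi]$. The warning about not expanding $\exists$ into its dense-below-$p$ form is a sensible caution but is in fact exactly what the paper does implicitly by decomposing $\Pi^0_{n+1}$ as $(\forall\bar w)\lnot\Pi^0_n$ rather than $(\forall\bar w)(\exists\bar u)\Pi^0_{n-1}$.
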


\begin{comment}
  By Lemma~\ref{L:BoundedForcing}, every $p$-local $\Pi^0_1$-formula
  $\theta(\bar{v})$ is equivalent to one of the form
  $(\forall\bar{w})(T(\bar{v},\bar{w}) = 0)$ where $T$ is a $p$-local
  name. The relation $p \forces \theta(\check{\bar{x}})$ is then
  equivalent to the $\Pi^0_1$ formula
  \begin{equation*}
    (\forall\bar{y},z)(\forall q \leq p)((q,\bar{x},\bar{y},z) \in T
    \lthen z = 0).
  \end{equation*}
  This handles the base case.

  Suppose $\theta(\bar{v}) \equiv
  (\forall\bar{w})\lnot\phi(\bar{v},\bar{w}),$ where
  $\phi(\bar{v},\bar{w})$ is a $\Pi^0_n$-formula of the forcing
  language.  Then $p \forces \theta(\check{\bar{x}})$ is equivalent to
  \begin{equation*}
    (\forall\bar{y})(\forall q \leq p)(q \nforces \phi(\check{\bar{x}},\check{\bar{y}})),
  \end{equation*}
  which is equivalent to a $\Pi^0_{n+1}$ formula by the induction hypothesis.
\end{comment}

\noindent
Note, however, that the $\Sigma^0_n$ forcing relation is not generally
$\Sigma^0_n$.

It follows that if $G$ is a $\Pi^0_n$-generic filter over $\MN$, then
for every $\Pi^0_n$-sentence $\theta$ which is $p$-local for some $p
\in G$, there is a condition $q \in G$ such that either $q \forces \theta$
or $q \forces \lnot\theta$. Working through the inductive definitions,
we see that in this scenario
\begin{equation*}
  \MN[G] \models \theta^G 
  \quad\IFF\quad 
  \mbox{$q \forces \theta$ for some $q \in G$} 
\end{equation*}
where $\theta^G$ is the standard formula obtained by replacing all
names of $\theta$ by their evaluations.

\begin{proposition}[$\RCA_0$]\label{P:GenUnif}
  If $\theta(\bar{v},w)$ is a $p$-local $\Sigma^0_1$ formula of the forcing
  language such that $p \forces (\forall\bar{v})(\exists w)\theta(\bar{v},w)$
  then there is a $p$-local name $F$ such that $p \forces
  (\forall\bar{v})\theta(\bar{v},F(\bar{v}))$.
\end{proposition}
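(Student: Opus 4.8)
The plan is to normalize $\theta$, extract a density fact from the hypothesis, and then build $F$ by hand from the $\Sigma^0_1$ data of the forcing. By coding the existential quantifier block of $\theta$ into a single quantifier and applying Lemma~\ref{L:BoundedForcing} to the bounded matrix, I may assume that $p\forces(\forall\bar v)(\forall w)[\theta(\bar v,w)\liff(\exists u)(T(\bar v,w,u)=0)]$ for some $p$-local name $T$, so that the hypothesis reads $p\forces(\forall\bar v)(\exists w)(\exists u)(T(\bar v,w,u)=0)$. The key preliminary step is a density lemma: for each $\bar x\in\NN^k$, the set $D_{\bar x}=\set{q\le p:(\exists w,u)[(q,\bar x,w,u,0)\in T]}$ is dense below $p$, and moreover whenever $(q,\bar x,w,u,0)\in T$ one already has $q\forces\theta(\check{\bar x},\check w)$. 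For density, given $q_0\le p$ unwind the hypothesis: there are $q_1\le q_0$ and $w$ with $q_1\forces(\exists u)(T(\check{\bar x},\check w,u)=0)$, then $q_2\le q_1$ and $u$ with $q_2\forces(T(\check{\bar x},\check w,\check u)=0)$; since $T$ is $p$-local there are $q_3\le q_2$ and $z$ with $(q_3,\bar x,w,u,z)\in T$, and the atomic forcing clause gives $z=0$, so $q_3\in D_{\bar x}$. The ``moreover'' clause follows from the downward closure and single-valuedness of the name $T$.

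The heart of the matter is the construction of $F$. One cannot simply let $F(\check{\bar x})$ be the least $w$ with $q\forces\theta(\check{\bar x},\check w)$: by Proposition~\ref{P:DefForcing} the forcing relation for a $\Sigma^0_1$ formula is only $\Pi^0_2$, whereas a name must be a $\Sigma^0_1$ set. Instead I would build $F$ by a stage-by-stage search through the $\Sigma^0_1$ enumerations of $\QQ$, $\le$, $\perp$, and $T$. Enumerate a list $(r_m,\bar x_m,w_m,u_m)_{m\in\NN}$ running over all tuples with $(r_m,\bar x_m,w_m,u_m,0)\in T$ and $r_m\le p$, and declare $r_m$ a \emph{commitment point for $\bar x_m$ with value $w_m$} precisely when $r_m\perp r_j$ holds for every $j<m$ with $\bar x_j=\bar x_m$. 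Finally set $(q,\bar x,w)\in F$ iff there is $m$ such that $r_m$ is a commitment point for $\bar x$ with value $w$ and $q\le r_m$. Since ``$r_m$ is a commitment point'' is a finite conjunction of $\Sigma^0_1$ facts and ``$q\le r_m$'' is $\Sigma^0_1$, the set $F$ is $\Sigma^0_1$. The commitment rule forces the commitment points for any fixed $\bar x$ to be pairwise incompatible, so that at most one of them lies weakly above any given condition; this at once yields single-valuedness of $F$ at every condition and downward closure in the first coordinate.

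It then remains to verify $p$-locality and that $F$ forces the conclusion. For $p$-locality, fix $q\le p$ and $\bar x$ and use the density lemma to find $r\le q$ with $(r,\bar x,w,u,0)\in T$; by a minimality argument over the enumeration one locates a commitment point $r_m$ for $\bar x$ compatible with $q$, and any common refinement $q'\le q$ of $q$ and $r_m$ then has $(q',\bar x,w_m)\in F$. For the conclusion, fix $\bar x$ and $q\le p$, choose $q'\le q$ with $(q',\bar x,w)\in F$, and let $r_m\ge q'$ be the underlying commitment point, so that $(r_m,\bar x,w,u,0)\in T$ for some $u$ and hence $q'\forces(T(\check{\bar x},\check w,\check u)=0)$. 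Using the downward closure of $F$ below $q'$, which pins $F(\check{\bar x})$ to the value $w$ there, together with the single-valuedness of $F$, one checks that $q'\forces(T(\check{\bar x},F(\check{\bar x}),\check u)=0)$ for the superposition name obtained by substituting $F$ for $w$ in $T$, hence $q'\forces\theta(\check{\bar x},F(\check{\bar x}))$. As $q\le p$ was arbitrary this gives $p\forces\theta(\check{\bar x},F(\check{\bar x}))$, and as $\bar x$ was arbitrary, $p\forces(\forall\bar v)\theta(\bar v,F(\bar v))$.

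I expect the construction of $F$ to be the real obstacle, for two intertwined reasons: a name must be $\Sigma^0_1$ definable, which rules out the natural ``least witness'' description and any description phrased directly through the forcing relation, and the order and incompatibility relations are themselves only $\Sigma^0_1$, so the bookkeeping that makes the commitment points into an antichain that is still dense enough below $p$ has to be arranged with some care (in particular one must avoid ever negating an unbounded $\Sigma^0_1$ statement inside the definition of $F$). The commitment discipline above is the device I would use to reconcile these demands.
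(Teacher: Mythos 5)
Your construction is in substance the same as the paper's: normalize $\theta$ through Lemma~\ref{L:BoundedForcing} to a $p$-local name $T$, enumerate the $\Sigma^0_1$-definable set of triples $(q,\bar x,y)$ with $q\le p$ and $(q,\bar x,y,0)\in T$, and then carve out of this enumeration a $\Sigma^0_1$-definable single-valued selection by a greedy first-come incompatibility rule. Two minor points of divergence are worth noting, both of which make your writeup slightly more careful than the paper's rather terse version. First, you explicitly close downward (``$(q,\bar x,w)\in F$ iff $q\le r_m$ for some commitment point $r_m$''), whereas the paper's $F$ as literally written is only the set of selected enumeration triples themselves, which is not obviously closed under $\le$; the downward closure is what one must actually take, and you take it. Second, your selection criterion (demand $r_m\perp r_j$ for \emph{every} earlier $j$ with $\bar x_j=\bar x_m$) is strictly stronger than the paper's rule, which also admits compatible earlier triples provided they carry the same $y$-value; this difference is immaterial to single-valuedness or to $\Sigma^0_1$-definability, since a bounded universal over the $\Sigma^0_1$ relations ${\le}$, ${\perp}$ stays $\Sigma^0_1$ in $\RCA_0$ via $\Bnd\Sigma^0_1$.

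The one place where you wave your hands is precisely where the paper is silent too, and it is the point I would press you on. Your claim that ``by a minimality argument over the enumeration one locates a commitment point $r_m$ for $\bar x$ compatible with $q$'' is not as routine as you make it sound. Taking the least index $N$ in the enumeration with $r_N$ compatible with $q$ (or with $r_N\le q$) and arguing that $r_N$ must be a commitment point requires, at the crucial step, passing from ``$r_j$ is compatible with $r_N$'' and ``$r_N$ is compatible with $q$'' to ``$r_j$ is compatible with $q$'' in order to contradict minimality of $N$ — and compatibility of forcing conditions is not transitive, not even in the tree posets $\PP$, $\PP'$ actually used later in the section (two extensions of a common proper prefix of $q$ can each be compatible with a prefix of $q$ without being compatible with $q$). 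So the one-line minimality argument does not close by itself; some additional descent along the tree or along the enumeration is needed, and that is the genuinely delicate part of the verification. You correctly identify where the difficulty lives — the $\Sigma^0_1$ constraint on names, the $\Sigma^0_1$-only access to ${\le}$ and ${\perp}$, the need to avoid negating unbounded existentials — but the density-of-commitment-points step itself deserves to be written out rather than asserted.
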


\begin{proof}
  We may assume that
  $\theta(\bar{v},w)$ is actually bounded and moreover of the form $T(\bar{v},w) = 0$ for some
  $p$-local name $T$ as in Lemma~\ref{L:BoundedForcing}. Let $(q_n,\bar{x}_n,y_n)$ enumerate the
  $\Sigma^0_1$-definable set 
  \begin{equation*}
    \set{(q,\bar{x},y) \in \QQ \times \NN^{k+1} : q \leq p \land (q,\bar{x},y,0) \in T}
  \end{equation*}
  Then define the name $F$ by $(q_n,\bar{x}_n,y_n) \in F$ iff
  for every $m < n$, we have $q_m \perp q_n$, or $\bar{x}_m \neq
  \bar{x}_n$, or $y_m = y_n$.
\end{proof}

If $G$ is a filter over $\QQ$ then let $\MN[G]$ be the model obtained
by evaluating all $G$-local names at $G$. Since elements of $\MN$ all
have canonical names, we see that $\MN[G]$ is an $\omega$-extension of
$\MN$.

\begin{theorem}[$\RCA_0$]\label{T:PresRCA}
  If $G$ is $\Pi^0_2$-generic for $\QQ$ over $\MN$, then $\MN[G]
  \models \RCA_0$.
\end{theorem}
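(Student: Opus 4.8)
The plan is to verify that $\MN[G]$ satisfies each ingredient of the functional presentation of $\RCA_0$ from the introduction: the clone axioms, closure under primitive recursion \emph{together with the uniqueness clause}, the dichotomy axiom, and the uniformization axiom. Since $\MN[G]$ is an $\omega$-extension of $\MN$, its first-order part is exactly $\NN$ and the arithmetic operations agree with those of $\MN$, so the dichotomy axiom and the usual arithmetic identities are inherited, while the constants, projections, and successor are present via their canonical names. Closure under superposition was already observed at the level of $p$-local names (given finitely many $G$-local names, pick $p \in G$ below witnessing conditions for all of them), so $\MN[G]$ is a clone. The work is therefore concentrated in primitive recursion and uniformization, and in both cases the lever is the truth lemma recorded just before the theorem: for $\Pi^0_2$-generic $G$ and a $\Pi^0_n$-sentence $\theta$ with $n \leq 2$ that is $p$-local for some $p \in G$, one has $\MN[G] \models \theta^G$ iff $q \forces \theta$ for some $q \in G$.

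For primitive recursion, fix $G$-local names $F_0$ and $F$ (of arities $k-1$ and $k+1$) and let $H$ be the $k$-ary name built from them by primitive recursion as in the discussion above; choosing $p \in G$ with $F_0,F$ both $p$-local makes $H$ $p$-local, hence $G$-local, and a direct unwinding shows $H^G$ satisfies the recursion equations. The delicate point is uniqueness, since no induction is yet available in $\MN[G]$. Suppose $h' \in \FN_k(\MN[G])$ also satisfies the recursion equations with data $F_0^G$ and $F^G$; write $h' = (H')^G$ for a $G$-local name $H'$. Then $\MN[G]$ satisfies the ($\Pi^0_1$) recursion equations for $H'$, so by the truth lemma there is $q \in G$, which we may take below $p$ and below a witnessing condition for $H'$, with $q \forces \forall\bar{x}\,[H'(\bar{x},0)=F_0(\bar{x})]$ and $q \forces \forall\bar{x}\,\forall y\,[H'(\bar{x},\sigma y)=F(H'(\bar{x},y),\bar{x},y)]$. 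I then claim $q \forces (H(\check{\bar{x}},\check{y})=H'(\check{\bar{x}},\check{y}))$ for all $\bar{x}$ and all $y$, proved by induction on $y$ \emph{carried out in $\MN$}: after unwinding the atomic forcing clause, the statement ``$q \forces (H(\check{\bar{x}},\check{y})=H'(\check{\bar{x}},\check{y}))$ for all $\bar{x}$'' is $\Pi^0_1$ in $y$ (the $\Sigma^0_1$ membership relations for $H,H'$ and for ${\leq}$ all occur negatively), so $\Ind\Pi^0_1$ — available since $\MN \models \RCA_0$ — applies. The base case uses the clause $(r,\bar{x},0,z)\in H$ iff $(r,\bar{x},z)\in F_0$, together with the first forced equation. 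For the successor step, given $(r,\bar{x},\sigma y,z) \in H$ and $(r,\bar{x},\sigma y,z') \in H'$ with $r \leq q$, the definition of $H$ supplies $u$ with $(r,\bar{x},y,u) \in H$ and $(r,\bar{x},y,u,z) \in F$; passing to a common extension $r^\ast \leq r$ at which $H'$ carries a value $u'$ at $(\bar{x},y)$ (possible by $p$-locality of $H'$), the induction hypothesis at $r^\ast$ forces $u=u'$, and then the second forced equation for $H'$ at $r^\ast$ forces $z=z'$. With $q \forces \forall\bar{x}\,\forall y\,(H(\bar{x},y)=H'(\bar{x},y))$ in hand, the truth lemma gives $\MN[G] \models \forall\bar{x}\,\forall y\,(H^G(\bar{x},y)=(H')^G(\bar{x},y))$, i.e.\ $h'=H^G$; so $H^G$ is the unique solution.

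For uniformization, let $F$ be a $G$-local $(k+1)$-ary name with $\MN[G] \models \forall\bar{w}\,\exists x\,[F^G(x,\bar{w})=0]$. This is $\theta^G$ for the $\Pi^0_2$-sentence $\theta = \forall\bar{w}\,\exists x\,(F(x,\bar{w})=0)$, which is $p$-local for some $p \in G$, so the truth lemma yields $q \in G$ with $q \forces \forall\bar{w}\,\exists x\,(F(x,\bar{w})=0)$. Applying Proposition~\ref{P:GenUnif} to the $q$-local $\Sigma^0_1$ formula $F(x,\bar{w})=0$ produces a $q$-local name $F'$ with $q \forces \forall\bar{w}\,(F(F'(\bar{w}),\bar{w})=0)$. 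Then $F'$ is $G$-local, so $g := (F')^G \in \FN_k(\MN[G])$, and since ``$\forall\bar{w}\,(F(F'(\bar{w}),\bar{w})=0)$'' is $\Pi^0_1$ the truth lemma gives $\MN[G] \models \forall\bar{w}\,(F^G(g(\bar{w}),\bar{w})=0)$, the required uniformizing function. Together with the first paragraph, this shows $\MN[G]$ is a functional model of $\RCA_0$.

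I expect the uniqueness clause for primitive recursion to be the main obstacle: $\MN[G]$ has no induction a priori, so the equality of two solutions must be extracted from a suitable instance of the $\Pi^0_1$ forcing relation proved by induction \emph{in $\MN$} and then transferred back through the truth lemma, and the need to pass to dense extensions of conditions — because names carry values only densely — is the technically fiddly part of that argument.
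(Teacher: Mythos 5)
Your proof is correct and is a careful expansion of the paper's one-line remark, which simply appeals to Proposition~\ref{P:GenUnif} together with the closure of $G$-local names under superposition and primitive recursion. The one point the paper leaves implicit --- the \emph{uniqueness} clause in the primitive recursion axiom, which cannot be obtained from induction inside $\MN[G]$ --- is exactly what you isolate and handle, by proving the forced equality $q \forces H(\check{\bar x},\check y) = H'(\check{\bar x},\check y)$ via $\Pi^0_1$-induction in $\MN$ and transferring it through the truth lemma; this is the right way to fill that gap.
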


\noindent
This is a consequence of Proposition~\ref{P:GenUnif} and
the fact that $G$-local names are closed under superposition and
primitve recursion.

Note that Theorem~\ref{T:PresRCA} can fail if the assumption on $G$ is
weakened to $\Pi^0_1$-genericity. This is because we only use names
which are $p$-local for some $p \in G$, which is not always sufficient
to guarantee closure under recursive comprehension. However, if
$\MN[G] \models \Ind\Sigma^0_1$, then $\MN[G]$ can be closed under
recursive comprehension to form a model of $\RCA_0$.

\subsection{Forcing Construction}

Let $\PP = (P,{\leq})$ be the poset of all (codes for) finite increasing functions
$p:\set{0,1,\dots,|p|-1}\to\NN$, ordered by end-extension (this is a
variant of Cohen forcing). Let $c:\NN^2\to\set{0,1}$ be a coloring in
$\MN$ for which there is no increasing $h:\NN\to\NN$ such that the set
\begin{equation}\label{E:hwhom1}
  \bigcup_{x=h(n)}^{h(n+1)-1} \set{ y \in \NN : c(x,y) = 1 }
\end{equation}
is infinite for every $n$. Let $\PP' = (P',{\leq'})$ be the subposet
consisting of all $p \in \PP$ such that
\begin{equation}\label{E:hwhom0}
  \bigcup_{x=p(n)}^{p(n+1)-1} \set{ y \in \NN : c(x,y) = 0 }
\end{equation}
is \emph{cofinite} for every $n < |p|-1$. The poset $\PP'$ is
$\Sigma^0_2$-definable over $\MN$, so our methods do not necessarily
apply for forcing with $\PP'$ over $\MN$. Instead, we force over a
\emph{$\Sigma^0_2$-envelope of $\MN$}: an $\omega$-extension $\MN'$ of
$\MN$ such that $\MN' \models \RCA_0$ and every total
$\Sigma^0_2$-definable function over $\MN$ belongs to $\MN'$. (Since
$\MN \models \Ind{\Sigma^0_2}$, the model $\MN'$ consisting of all
total functions which are $\Sigma^0_2$-definable over $\MN$ is as
required, but we will need the more general notion later.)

The hypothesis that there is no increasing function $h:\NN\to\NN$ such
that the sets~\eqref{E:hwhom1} are all infinite clearly implies that
each one of the sets $D_n' = \set{p \in \PP' : |p| \geq n}$ is open
dense. Since a generic filter $G$ for $\PP'$ must meet each one of
these open dense sets, we see that $g = \bigcup G$ is a well-defined
increasing function $g:\NN\to\NN$ such that~\eqref{E:hwhom0} is
cofinite for each $n$. This function $g$ is the \emph{generic real}
associated to $G$. The generic filter $G$ is in fact completely
determined by $g$ since $G = \set{p \in \PP : p \subseteq g}$. Since
$g$ will be of greater interest, we will systematically work with $g$
instead of $G$ throughout the following.

The following fact is the keystone to showing that forcing with $\PP'$
over $\MN'$ leads to a generic function $g$ which is well behaved over
the $\omega$-submodel $\MN$.

\begin{lemma}\label{L:S1Density}
  Suppose $U \subseteq \PP$ is $\Sigma^0_1$-definable over $\MN$. If
  every $p \in \PP'$ is such that for every $q \leq' p$ there is a
  $r \leq q$ such that $r \in U$, then for every $q \leq' p$ there is
  a $r \leq' q$ such that $r \in U$.
\end{lemma}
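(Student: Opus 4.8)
The plan is to argue by contradiction. Suppose the conclusion fails, so there is a condition $q_*\in\PP'$ with no $\leq'$-extension in $U$; the aim is to use $q_*$, together with the density hypothesis, to build an increasing function $h\colon\NN\to\NN$ for which $\bigcup_{x=h(n)}^{h(n+1)-1}\set{y:c(x,y)=1}$ is infinite for every $n$, contradicting the standing hypothesis on $c$.

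First I would record two facts about $\PP'$. (i) If a block $[a,b]$ witnesses that some condition is not in $\PP'$ --- that is, $\bigcup_{x=a}^{b}\set{y:c(x,y)=0}$ is not cofinite --- then $\set{y:c(x,y)=1\text{ for all }x\in[a,b]}$ is infinite, so every larger block $[a',b']\supseteq[a,b]$ has infinite color-$1$ union (witnessed by $x=a$). (ii) Every $p\in\PP'$ is $\leq'$-extendible: otherwise, with $e=p(|p|-1)$, the set $\bigcap_{x=e}^{m-1}\set{y:c(x,y)=1}$ would be infinite for all $m>e$, and then (as in the paragraph preceding the lemma) $n\mapsto e+n$ would be a function $h$ of the forbidden kind; hence for each $p\in\PP'$ there is a threshold $m^*(p)$, not necessarily computable, with $p\cat\seq{m}\in\PP'$ for all $m\ge m^*(p)$. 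Then I would run the recursion: set $p_0=q_*$ and $h(0)=q_*(|q_*|-1)$; given $p_i\in\PP'$ with $q_*\subseteq p_i$, apply the hypothesis to $p_i\leq' p_i$ to get $r_i\leq p_i$ with $r_i\in U$, noting $r_i\notin\PP'$ (otherwise $r_i\leq' q_*$ would put a $\leq'$-extension of $q_*$ in $U$), so $r_i$ has an internal block $D_i$ lying inside $[p_i(|p_i|-1),\,r_i(|r_i|-1)-1]$ which is ``bad'' in the sense of (i). Choose $V_i>r_i(|r_i|-1)$ with $V_i\ge m^*(p_i)$, and set $p_{i+1}=p_i\cat\seq{V_i}\in\PP'$ and $h(i+1)=V_i$. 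The block $[h(i),h(i+1)-1]=[p_i(|p_i|-1),V_i-1]$ then contains $D_i$, so it has infinite color-$1$ union; since the recursion is defined at every stage, it produces the forbidden $h$.

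The step I expect to be the main obstacle is making this recursion a legitimate construction inside $\MN$, so that the resulting $h$ really contradicts the standing hypothesis on $c$ rather than merely existing in some larger model. This is where $\Sigma^0_1$-definability of $U$ is essential: it lets us locate each witness $r_i$ by an unbounded search through an enumeration of $U$ that is guaranteed to terminate by the density hypothesis. One then has to check that the choices of $V_i$ and the formation of $p_{i+1}$ can be arranged so that the whole recursion, and hence $h$, is available in $\MN$; the purely combinatorial content --- that a bad block is always absorbed into a genuine $\PP'$-extension once it is stretched past the extendibility threshold --- is exactly facts (i) and (ii) above.
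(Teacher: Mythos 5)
Your approach begins correctly --- find a $q_*\in\PP'$ with no $\leq'$-extension in $U$, use successive witnesses from $U$ to generate blocks with infinite colour-$1$ union, and assemble an increasing $h$ contradicting the standing hypothesis on $c$. But there is a genuine gap at precisely the spot you flag as ``the main obstacle,'' and I do not think your framework can close it. You build a growing chain $p_0 \le p_1 \le p_2 \le \cdots$ in $\PP'$ by appending $V_i$ to $p_i$, and to guarantee $p_{i+1}=p_i\cat\seq{V_i}\in\PP'$ you need $V_i\ge m^*(p_i)$. That threshold is defined by a $\Sigma^0_2$ condition (cofiniteness of the new block's colour-$0$ union), so at each step of the recursion you must make a choice that $\MN$ cannot decide. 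Knowing that $r_i$ has a bad block $D_i$ tells you the block $[V_{i-1},V_i-1]$ will have \emph{infinite colour-$1$ union}, but that is a different property from having \emph{cofinite colour-$0$ union}, which is what membership in $\PP'$ requires; the former does not imply the latter. Since a new non-effective choice is needed at every stage (not just once), you cannot absorb it into a fixed parameter, and the resulting $h$ need not be a function of $\MN$ --- which is fatal, since the contradiction relies on $h\in\MN$.

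The paper evades this by never growing the chain. Instead of $p_{i+1}=p_i\cat\seq{V_i}$, it always returns to the fixed $q_*$ and sets $q_n = q_*\cat\seq{h(n)}$, a condition of length $|q_*|+1$ with exactly one new block $[q_*(|q_*|-1), h(n)-1]$. Once $h(0)$ is chosen (a single, one-time parameter) so that $q_0\in\PP'$, monotonicity of the cofiniteness condition in the right endpoint gives $q_n\in\PP'$ automatically for every $n$, because $h(n)\ge h(0)$. The only thing left in the recursion is the unbounded $\Sigma^0_1$ search through $U$ for $r_n\leq q_n$, which terminates by the density hypothesis. So the whole construction of $h$ is a $\Sigma^0_1$-definable primitive recursion with a single ground-model parameter $h(0)$, hence lies in $\MN$. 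Your facts (i) and (ii) are both correct and both survive, but (ii) should be invoked only once, to secure $h(0)$; do not re-invoke it inside the recursion. If you replace $p_{i+1}=p_i\cat\seq{V_i}$ by $q_*\cat\seq{V_i}$ throughout, your argument collapses into the paper's.
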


\begin{proof}
  Suppose that every $q \leq' p$ has an extension in $U$, but there is
  some $q \leq' p$ which has no extension in $U \cap \PP'$. We will
  use such a $q$ to construct an increasing function $h:\NN\to\NN$ in
  $\MN$ such that the set~\eqref{E:hwhom1} is infinite for every $i
  \geq 1$, thereby contradicting our hypothesis that there are no such
  functions.

  First, find $h(0) \in \NN$ such that $q_0 \in \PP'$ where $q_0 =
  q\cat{h(0)}$. By hypothesis, we can find an extension $r_0 \geq q_0$
  in $U$. It follows that $r_0 \notin \PP'$, which means that
  \begin{equation*}
    \bigcup_{x=r_0(i-1)}^{r_0(i)-1} \set{y \in \NN: c(x,y) = 0}
  \end{equation*}
  is coinfinite for some $0 < i < |r_0|$. Since $q_0 \in \PP'$, this
  $i$ must be greater than or equal to $|q_0|$. So if we set $h(1) =
  \max(r_0) = r_0(|r_0|-1)$, we necessarily have
  \begin{equation*}
    \bigcup_{x=h(0)}^{h(1)-1} \set{y \in \NN: c(x,y) = 1}
  \end{equation*}
  is infinite.

  Once $h(n)$ has been defined, set $q_n = q\cat{h(n)}$ and note that
  $q_n$ is necessarily in $\PP'$ since $h(n) \geq h(0)$. As above, we
  can then find an extension $r_n \leq q_n$ in $U$ and set $h(n+1) =
  \max(r_n) = r_n(|r_n|-1)$. As before, we then have that
  \begin{equation*}
    \bigcup_{x=h(n)}^{h(n+1)-1} \set{y \in \NN: c(x,y) = 1} 
  \end{equation*}
  is infinite.

  This construction can be carried out completely inside
  $\MN$. Indeed, all we need to do at each stage is to search for an
  extension $r_n \leq q_n$ in $U$, which can be done by enumerating
  $U$ until such $r_n$ is found.
\end{proof}

\begin{proposition}\label{P:Locality}
  If $p \in \PP'$ and $F$ is a $p$-local name for $\PP$ over $\MN$,
  then $F$ is also $p$-local for $\PP'$ over $\MN'$.
\end{proposition}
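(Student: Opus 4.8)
The proposition is a density-transfer statement, and the real work is already packaged in Lemma~\ref{L:S1Density}. First I would check that $F$ still qualifies as a partial $k$-ary name for $\PP'$ over $\MN'$: since $\MN'$ is an $\omega$-extension of $\MN$ it has the same first-order part, so the $\Sigma^0_1$ formula defining $F$ (whose parameters lie in $\MN\subseteq\MN'$) defines the same set over $\MN'$ and is still $\Sigma^0_1$; moreover the two structural requirements on a name --- downward closure in the condition coordinate and functionality --- are absolute properties of the set $F$, and downward closure under $\leq$ entails downward closure under $\leq'$ because $\leq'$ is just the restriction of $\leq$ to $\PP'$. Hence $F$ (intersected with $\PP'\times\NN^{k+1}$ if one insists on that formality) is a partial $k$-ary name for $\PP'$ over $\MN'$.

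It remains to verify $p$-locality. Fix $\bar{x}\in\NN^k$ and put
\begin{equation*}
  U_{\bar{x}} = \set{r\in\PP : (\exists y)((r,\bar{x},y)\in F)},
\end{equation*}
which is $\Sigma^0_1$-definable over $\MN$ since $F$ is. Because $F$ is $p$-local for $\PP$ over $\MN$, for every $q\leq p$ there are $y\in\NN$ and $r\leq q$ with $(r,\bar{x},y)\in F$, i.e.\ $r\in U_{\bar{x}}$; in particular every $q\leq' p$ has a $\leq$-extension in $U_{\bar{x}}$, since $q\leq' p$ implies $q\leq p$. This is exactly the hypothesis of Lemma~\ref{L:S1Density} (applied with $U=U_{\bar{x}}$ and with the given $p\in\PP'$), so the lemma gives, for every $q\leq' p$, some $r\leq' q$ with $r\in U_{\bar{x}}$ --- that is, $r\in\PP'$, $r\leq q$, and $(r,\bar{x},y)\in F$ for some $y\in\NN$. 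As $\bar{x}$ was arbitrary, this says precisely that $F$ is $p$-local for $\PP'$ over $\MN'$.

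The only non-routine ingredient is Lemma~\ref{L:S1Density}, whose force ultimately comes from the standing hypothesis that $c$ admits no increasing $h:\NN\to\NN$ for which every set \eqref{E:hwhom1} is infinite; everything else here is bookkeeping about which features of $F$ are unaffected by the change of poset and of ground model. I do not expect any obstacle beyond setting up $U_{\bar{x}}$ so that Lemma~\ref{L:S1Density} applies without modification.
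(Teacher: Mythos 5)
Your proof is correct and follows exactly the paper's argument: the paper's one-line proof is ``Apply Lemma~\ref{L:S1Density} to the sets $U_{\bar{x}} = \set{r \in \PP: (\exists y)[(r,\bar{x},y) \in F]}$,'' which is precisely the reduction you carry out, just with the routine name-verification bookkeeping made explicit.
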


\begin{proof}
  Apply Lemma~\ref{L:S1Density} to the sets $U_{\bar{x}} = \set{r \in
    \PP: (\exists y)[(r,\bar{x},y) \in F]}$.
\end{proof}

\noindent
Note that there are names for $\PP$ over $\MN$ which are $p$-local for
$\PP'$ over $\MN'$, but not $p$-local for $\PP$ over $\MN$. One such
name is the $2$-ary name $f$ such that $f(n,m)$ is the $(m+1)$-th element
of
\begin{equation*}
  \bigcup_{x=g(n)}^{g(n+1)-1} \set{ y \in \NN : c(x,y) = 0 }.
\end{equation*}
In particular, the generic function $g$ is not $\Pi^0_2$-generic for
$\PP$ over $\MN$. It is however weakly $\Sigma^0_2$-generic for $\PP$
over $\MN$ as we will now show.

\begin{proposition}\label{P:Sigma2ForcingXfer}
  Let $\theta$ be a $p$-local $\Sigma^0_2$ sentence of the forcing
  language for $\PP$ over $\MN$. If $p \in \PP'$ and $p \forces'
  \theta$ then there is a $q \leq' p$ such that $q \forces \theta$.
\end{proposition}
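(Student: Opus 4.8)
The plan is to put $\theta$ into the normal form $(\exists\bar w)\psi(\bar w)$, where $\psi(\bar w)\equiv(\forall\bar u)(T(\bar w,\bar u)=0)$ and $T$ is a $p$-local name obtained by applying Lemma~\ref{L:BoundedForcing} to the bounded matrix of $\theta$, and then to feed the associated $\Sigma^0_1$-definable ``refuting sets'' into Lemma~\ref{L:S1Density}. For each $\bar w$ set
\[
  U_{\bar w}=\set{r\in\PP:(\exists\bar u)(\exists y\neq 0)\,((r,\bar w,\bar u,y)\in T)},
\]
which, since $T$ is $\Sigma^0_1$-definable over $\MN$, is $\Sigma^0_1$-definable over $\MN$; and as $\MN'$ is an $\omega$-extension of $\MN$ this same definition picks out the same subset of $\PP$ over $\MN'$. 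Unwinding the atomic clause together with the clause for $\forall$ in the definition of forcing gives the bookkeeping fact: for every $r\in\PP'$, one has $r\forces'\psi(\check{\bar w})$ if and only if $r$ has no $\leq'$-extension in $U_{\bar w}$, and $r\forces\psi(\check{\bar w})$ if and only if $r$ has no $\leq$-extension in $U_{\bar w}$. (Proposition~\ref{P:Locality} ensures that the names occurring in $\theta$, hence $T$, are $p$-local for $\PP$ over $\MN$ as well as for $\PP'$ over $\MN'$, so that all of these forcing statements are legitimate; $p$-locality passes down to every $r\leq' p$.)

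Now suppose $p\in\PP'$ and $p\forces'\theta$. By the clause for $\exists$, for every $q\leq' p$ there are $r\leq' q$ and $\bar w$ with $r\forces'\psi(\check{\bar w})$; by the bookkeeping fact, $r$ then has no $\leq'$-extension in $U_{\bar w}$. Apply Lemma~\ref{L:S1Density} with $U_{\bar w}$ in the role of $U$ and with $r$ in the role of the parameter $p$ of that lemma. Since $r$ itself has no $\leq'$-extension in $U_{\bar w}$, the conclusion of the lemma fails, so its hypothesis must fail as well; that is, there is an $r'\leq' r$ with no $\leq$-extension in $U_{\bar w}$. By the bookkeeping fact again, $r'\forces\psi(\check{\bar w})$, and hence $r'\forces\theta$ by the $\exists$-clause (once $r'$ forces $\psi(\check{\bar w})$, so does every extension of $r'$). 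Since $q\leq' p$ was arbitrary, the set $\set{r'\in\PP':r'\forces\theta}$ is dense below $p$ in $\PP'$; taking $q=p$ yields an $r'\leq' p$ with $r'\forces\theta$, as required.

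The hard part is the mismatch between $\leq$ and $\leq'$: a condition of $\PP$ below $p$ may admit \emph{no} extension lying in $\PP'$ whatsoever, so facts about $\forces'$ cannot be transported to $\forces$ by soft persistence arguments. Lemma~\ref{L:S1Density} is precisely the bridge across this gap, but it speaks only about $\Sigma^0_1$-definable subsets of $\PP$ over $\MN$; hence the real content of the proof is the passage to a normal form in which ``$r$ forces the $\Pi^0_1$ part of $\theta$'' literally means ``the fixed $\Sigma^0_1$-definable set $U_{\bar w}$ is avoided below $r$''. The reduction of the bounded matrix via Lemma~\ref{L:BoundedForcing}, and the fact that it is available for both $\forces$ and $\forces'$, are routine given Proposition~\ref{P:Locality}.
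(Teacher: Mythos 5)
Your proof is correct and follows essentially the same route as the paper's: both reduce $\theta$ to the normal form furnished by Lemma~\ref{L:BoundedForcing}, both observe that forcing the $\Pi^0_1$ part is equivalent to avoiding a fixed $\Sigma^0_1$-definable subset of $\PP$, and both invoke Lemma~\ref{L:S1Density} in contrapositive form to transfer that avoidance from $\leq'$ to $\leq$. The paper streamlines the argument by first strengthening $p$ to fix the existential witness (``WLOG $p \forces' (\forall v)[F(\check{x},v)=0]$ for some $x$'') before applying the density lemma once, whereas you run a density argument below $p$ in $\PP'$ and then specialize $q=p$ at the end; the two are logically interchangeable, your version just carries a little extra scaffolding. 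One small attribution nit: $p$-locality of the names in $\theta$ for $\PP$ over $\MN$ is part of the hypothesis ($\theta$ is a $p$-local sentence for $\PP$ over $\MN$), and Proposition~\ref{P:Locality} is what upgrades this to $p$-locality for $\PP'$ over $\MN'$, not the other way around as your parenthetical suggests.
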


\begin{proof}
  By Lemma~\ref{L:BoundedForcing}, we may suppose that $\theta$ is of
  the form $(\exists u)(\forall v)[F(u,v) = 0]$ where $F$ is a $2$-ary
  $p$-local name for $\PP$ over $\MN$. Without loss of generality, we
  may further assume that there is some $x \in \NN$ such that, for all
  $y \in \NN$, $p \forces' F(\check{x},\check{y}) = 0$. Applying
  Lemma~\ref{L:S1Density} to the set $U = \set{r \in \PP : (\exists
    y,z)[(r,x,y,z) \in F \land z \neq 0]}$, we see that there must be
  a $q \leq' p$ with no extension in $U$ at all. This is equivalent to
  saying that $q \forces F(\check{x},\check{y}) = 0$ for all $y \in
  \NN$. Hence, $q \forces \theta$.
\end{proof}

Together with the generic extension $\MN'[g]$ of $\MN'$, we obtain an
$\omega$-extension $\MN[g]$ of $\MN$ by evaluating
all partial names in $\MN$ which are $g$-local for $\PP'$ over
$\MN'$. This is not a generic extension, but it does satisfy
$\RCA_0$. In order to iterate the forcing construction, we will need
to make sure that the generic extension $\MN'[g]$ is a
$\Sigma^0_2$-envelope for $\MN[g]$. The key to prove this is the
following fact.

\begin{proposition}\label{P:Lowness}
  Let $\theta(u,v)$ be a $p$-local $\Sigma^0_2$ formula of the forcing
  language for $\PP$ over $\MN$. If $p \forces' (\forall u)(\exists
  w)\theta(u,w)$ then there is a $p$-local name $F$ for $\PP'$ over
  $\MN'$ such that $p \forces' (\forall u)\theta(u,F(u))$.
\end{proposition}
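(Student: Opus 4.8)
The plan is to combine the two forcing transfer tools we already have: Proposition~\ref{P:Sigma2ForcingXfer}, which lets us pass from a $\Sigma^0_2$ fact forced by $\PP'$ over $\MN'$ to the same fact forced by $\PP$ over $\MN$ below a suitable condition, and Proposition~\ref{P:GenUnif}, which provides a uniformizing name for a $\Sigma^0_1$ (hence, after absorbing one existential quantifier, essentially $\Sigma^0_2$) statement forced by $\PP$ over $\MN$. First I would reduce to a normal form: by Lemma~\ref{L:BoundedForcing} we may assume $\theta(u,v)$ has the shape $(\exists s)(\forall t)[F(u,v,s,t)=0]$ for a $p$-local name $F$ for $\PP$ over $\MN$. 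The hypothesis is $p \forces' (\forall u)(\exists w)\theta(u,w)$, i.e.\ $p \forces' (\forall u)(\exists w)(\exists s)(\forall t)[F(u,w,s,t)=0]$; contracting the two adjacent existentials into a single one over pairs, this reads $p \forces' (\forall u)(\exists w')\psi(u,w')$ for a $\Pi^0_1$ formula $\psi$.

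Next I would extract, for each fixed $x \in \NN$, a single condition doing the work. Working below $p$ in $\PP'$ over $\MN'$, for each $x$ we have $p \forces' (\exists w')\psi(\check{x},w')$, so (unwinding the forcing clause for $\exists$ and using density) there is $q_x \leq' p$ and $y_x$ with $q_x \forces' \psi(\check{x},\check{y}_x)$; since $\psi(\check{x},\check{y}_x)$ is $p$-local $\Sigma^0_2$, Proposition~\ref{P:Sigma2ForcingXfer} (applied below $q_x$, or rather an obvious relativization of it below any condition of $\PP'$) yields $r_x \leq q_x$ with $r_x \forces \psi(\check{x},\check{y}_x)$ in the forcing for $\PP$ over $\MN$. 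The map $x \mapsto (r_x,y_x)$ can be chosen to be $\Sigma^0_1$-definable over $\MN'$, because searching for a suitable $(r,y)$ with $r \forces \psi(\check{x},\check{y})$ is (by Proposition~\ref{P:DefForcing}, $\psi$ being $\Pi^0_1$ so $r\forces\psi$ is $\Pi^0_1$, and we only need to verify finitely much to certify a witness — more carefully, one exploits that the relevant density/compatibility conditions from Proposition~\ref{P:Sigma2ForcingXfer} and Lemma~\ref{L:S1Density} are $\Sigma^0_1$ over $\MN'$). This gives $p \forces (\forall u)(\exists w')\psi(u,w')$ in the sense that below $p$ the relevant dense sets are hit; more precisely it shows $p \forces' (\forall u)(\exists w')\bigl(\psi(u,w') \text{ forced by some } r \in \PP\bigr)$, which is what we need to feed into the uniformization machinery.

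Then I would apply Proposition~\ref{P:GenUnif} (in the form appropriate to $\MN'$, since $\MN' \models \RCA_0$): the statement ``$(\exists w')[\psi(u,w') \text{ is forced below a condition compatible with the generic}]$'' is $\Sigma^0_1$ over $\MN'$ once we use Lemma~\ref{L:BoundedForcing} to replace $\psi$ by a name equation, so we obtain a $p$-local name $F'$ for $\PP'$ over $\MN'$ with $p \forces' (\forall u)\psi(u,F'(u))$, and unpacking the contracted quantifier this is exactly $p \forces' (\forall u)\theta(u,F'(u))$. The main obstacle is the bookkeeping in the middle paragraph: verifying that the selection $x\mapsto(r_x,y_x)$ witnessing the $\PP$-over-$\MN$ forcing can be made $\Sigma^0_1$ over $\MN'$ and $p$-local, so that Proposition~\ref{P:GenUnif} genuinely applies. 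This rests on the fact — implicit in the proofs of Lemma~\ref{L:S1Density} and Proposition~\ref{P:Sigma2ForcingXfer} — that the passage from $\forces'$ to $\forces$ for $\Sigma^0_2$ sentences is effected by a $\Sigma^0_1$ search (enumerate $\PP'$-conditions and $U$-witnesses until a certifying extension appears), so the whole selection is $\Sigma^0_1$-definable over $\MN'$, exactly as required to stay within the functional interpretation.
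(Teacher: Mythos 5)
There is a genuine gap in the middle step. Your plan --- reduce $\theta$ to an $\exists\forall$ normal form, transfer the relevant forcing from $\PP'/\MN'$ down to $\PP/\MN$ via Proposition~\ref{P:Sigma2ForcingXfer}, and then uniformize --- is in the right spirit, but the claim that the selection $x\mapsto(r_x,y_x)$ is $\Sigma^0_1$-definable over $\MN'$ is not established, and the justification you offer for it is incorrect. Writing $\theta(u,v)\equiv(\exists w)\phi(u,v,w)$ with $\phi$ $\Pi^0_1$, the relation $r\forces\phi(\check{x},\check{y},\check{z})$ is (by Proposition~\ref{P:DefForcing}) genuinely $\Pi^0_1$ over $\MN$: verifying it is \emph{not} a matter of ``finitely much'' information, and nothing in Lemma~\ref{L:S1Density} or Proposition~\ref{P:Sigma2ForcingXfer} turns that $\Pi^0_1$ check into a $\Sigma^0_1$ search. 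Moreover, Lemma~\ref{L:BoundedForcing} applies only to bounded formulas; it cannot be used to replace the $\Pi^0_1$ formula $\psi$ by a name equation $T_\psi=0$ over a total name, so the formula you hand to Proposition~\ref{P:GenUnif} is not a $\Sigma^0_1$ formula of the forcing language for $\PP'$ over $\MN'$.

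The missing ingredient is precisely where the $\Sigma^0_2$-envelope hypothesis does its work. Since $r\forces\phi(\check{x},\check{y},\check{z})$ is $\Pi^0_1$ over $\MN$ and $\PP'$ is $\Sigma^0_2$ over $\MN$, the restricted relation
\begin{equation*}
R'=\set{(q,x,y,z)\in\PP'\times\NN^3: q\forces\phi(\check{x},\check{y},\check{z})}
\end{equation*}
is $\Sigma^0_2$-definable over $\MN$ and hence has a total $\Sigma^0_2$-definable enumeration, which is an actual element of $\MN'$. Only once $R'$ is available as a parameter in $\MN'$ does the search for a witnessing $(r,y)$ become $\Sigma^0_1$ over $\MN'$, and at that point one can define the name $F$ directly from an enumeration $\seq{r_n,x_n,y_n,z_n}$ of $R'$ --- taking $(q,x,y)\in F$ iff there is $n$ with $x=x_n$, $y=y_n$, $q\leq r_n$, and $q\nleq r_m$ for all $m<n$ --- and then use Proposition~\ref{P:Sigma2ForcingXfer} to show $F$ is $p$-local and forces the conclusion. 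Your detour through Proposition~\ref{P:GenUnif} is not needed and, without the explicit step of internalizing $R'$ into $\MN'$, cannot be made to go through as stated.
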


\begin{proof}
  Suppose $\theta(u,v) \equiv (\exists w)\phi(u,v,w)$, where
  $\phi(u,v,w)$ is a $p$-local $\Pi^0_1$ formula of the forcing
  language for $\PP$ over $\MN$. By Proposition~\ref{P:DefForcing},
  the relation
  \begin{equation*}
    R = \set{(q,x,y,z) \in \PP\times\NN^3:
      q \forces \phi(\check{x},\check{y},\check{z})}
  \end{equation*} 
  is $\Pi^0_1$ definable over $\MN$. Therefore $R' = R \cap
  \PP'\times\NN^3 \in \MN'$. Fix an enumeration
  $\seq{r_n,x_n,y_n,z_n}$ of $R'$ and define the partial name $F$ by
  $(q,x,y) \in F$ iff there is an $n$ such that $x = x_n$, $y = y_n$,
  and $q \leq r_n$ but $q \nleq r_m$ for $m <
  n$. Proposition~\ref{P:Sigma2ForcingXfer} shows that $F$ is a
  $p$-local name and that $p \forces' (\forall u)\theta(u,F(u))$, as required.
\end{proof}

\noindent
Thus, if the $\Sigma^0_2$ formula $\theta(u,v)$ defines a map over
$\MN[g]$, then this function actually belongs to the generic envelope
$\MN'[g]$. In other words, $\MN'[g]$ is a $\Sigma^0_2$-envelope for
$\MN[g]$.

\begin{proof}[Proof of Theorem~\ref{T:HWForcing}]
  We start with a model $\MN_0$ of $\RCA_0 + \Ind{\Sigma^0_2}$. Then
  we find a $\Sigma^0_2$-envelope $\MN_0'$ for $\MN_0$ as explained
  above. Let $c:\NN^2\to\set{0,1}$ be a coloring in $\MN_0$ for which
  there is no increasing $h:\NN\to\NN$ such that
  \begin{equation*}
    \bigcup_{x=h(n)}^{h(n+1)-1} \set{y : c(x,y) = 0}
  \end{equation*}
  is infinite for every $n$. Then we force with $\PP'$ over $\MN_0'$
  to obtain a generic extension $\MN_1' = \MN_0'[g]$ and at the same
  time an $\omega$-extension $\MN_1 = \MN_0[g]$, where $g:\NN\to\NN$
  is an increasing function such that
  \begin{equation*}
    \bigcup_{x=g(n)}^{g(n+1)-1} \set{y : c(x,y) = 0}
  \end{equation*}
  is cofinite for every $n$. By Proposition~\ref{P:Lowness}, we then
  have that $\MN_1'$ is a $\Sigma^0_2$-envelope for $\MN_1$, hence
  $\MN_1 \models \Ind{\Sigma^0_2}$.

  We can iterate this process to obtain two parallel sequences of
  $\omega$-extensions
  \begin{equation*}
    \begin{array}{ccccccccccc}
      \MN_0 & \subseteq & 
      \MN_1 & \subseteq &
      \MN_2 & \subseteq & 
      \cdots & \subseteq & 
      \MN_\omega &=& \bigcup_{i<\omega} \MN_i \\
      \cap && \cap && \cap && && \cap \\
      \MN_0' & \subseteq &
      \MN_1' & \subseteq &
      \MN_2' & \subseteq &
      \cdots & \subseteq & 
      \MN_\omega' &=& \bigcup_{i<\omega} \MN_i' \\
    \end{array}
  \end{equation*}
  At each stage, we have that $\MN_i$, $\MN_i'$ are both models of
  $\RCA_0$ and $\MN_i'$ is a $\Sigma^0_2$-envelope of $\MN_i$. It
  follows that these facts are also true for $\MN_\omega$ and
  $\MN_\omega'$. Therefore $\MN_\omega \models \RCA_0 +
  \Ind\Sigma^0_2$. Moreover, with careful bookkeeping to deal with
  every potential counterexample $c:\NN^2\to\set{0,1}$ of
  $\hw\RT^2_2$, we can make sure that $\MN_\omega \models
  \hw\RT^2_2$. In the end, $\MN_\omega$ is the required
  $\omega$-extension.
\end{proof}

\subsection{Forcing over $\omega$-models}

When $\MN$ is an $\omega$-model of $\RCA_0$, the forcing methods of
the last section are slight overkill. Indeed, there is no risk of
breaking induction by adjoining more second-order elements to
$\MN$. Nevertheless, the forcing posets used in the last section can
be used to shed some light on the situation for $\omega$-models.

\begin{proposition}[$\ACA_0$]\label{P:1GenLow}
  For every computable coloring $c:\NN^2\to\set{0,1}$ one of the following is
  true. 
  \begin{itemize}
  \item There is a computable increasing function $h:\NN\to\NN$ such
    that 
    \begin{equation*}
      \bigcup_{x=h(n)}^{h(n+1)-1} \set{ y \in \NN : c(x,y) = 1 }
    \end{equation*}
    is infinite for every $n$.
  \item There is a $0'$-computable $1$-generic increasing function
    $g:\NN\to\NN$ such that
   \begin{equation*}
      \bigcup_{x=g(n)}^{g(n+1)-1} \set{ y \in \NN : c(x,y) = 0 }
    \end{equation*}
    is cofinite for every $n$.
 \end{itemize}
\end{proposition}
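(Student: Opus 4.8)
The plan is to run a single $0'$-oracle finite-extension construction of the desired function inside the forcing $\PP'$ of the previous subsection, now specialized to the given computable coloring $c$, and to notice that whenever the construction stalls we can instead read off the computable function $h$ required by the first alternative. I would build a chain of conditions $q_0 \leq' q_1 \leq' \cdots$ in $\PP'$ starting from the empty condition, interleaving two kinds of stages. At a \emph{length stage} from $q_s$ I search for an $m$ with $q_s\cat m \in \PP'$ and set $q_{s+1} = q_s\cat m$. At a \emph{genericity stage} for the $s$-th $\Sigma^0_1$-definable set $U_s \subseteq \PP$ I run two searches in parallel --- one for a $\PP'$-extension $r \leq' q_s$ with $r \in U_s$ (a ``meet'' outcome), one for a $\PP'$-extension $q' \leq' q_s$ with no extension in $U_s$ at all (an ``avoid'' outcome) --- and set $q_{s+1}$ to whichever is found first. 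Since membership in the $\Sigma^0_2$ set $\PP'$ is witnessed by a tuple whose $\Pi^0_1$ part $0'$ can verify, while ``$r \in U_s$'' is $\Sigma^0_1$ and ``$q'$ has no extension in $U_s$'' is $\Pi^0_1$, each of these searches is a $0'$-oracle search. If every search halts, then $g = \bigcup_s q_s$ is an increasing total function; it is $1$-generic for $\PP'$ because each $\Sigma^0_1$-definable set of conditions occurs as some $U_s$ and is met or avoided by an initial segment of $g$; and for each $n$, as soon as $|q_s| > n+1$ the membership $q_s \in \PP'$ forces $\bigcup_{x=g(n)}^{g(n+1)-1}\set{y \in \NN : c(x,y) = 0}$ to be cofinite, so this holds for every $n$ because $|q_s| \to \infty$. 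This $g$ is $0'$-computable, so the second alternative holds.

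It remains to examine a stalled search, and in each case it delivers the first alternative. If a length stage from $q_s$ stalls, then $q_s \cat m \notin \PP'$ for every $m$; since $q_s \in \PP'$ this says that $\bigcup_{x=q_s(|q_s|-1)}^{m-1}\set{y : c(x,y)=0}$ is coinfinite for every $m$, i.e.\ that $\set{y \in \NN : c(x,y)=1 \text{ for all } x \text{ with } q_s(|q_s|-1)\le x\le q_s(|q_s|-1)+n}$ is infinite for every $n$, so the strictly increasing computable function $h(i) = q_s(|q_s|-1)+i$ has $\bigcup_{x=h(n)}^{h(n+1)-1}\set{y : c(x,y)=1} = \set{y : c(h(n),y)=1}$ infinite. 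If a genericity stage for $U_s$ stalls at $q_s$, then $q_s$ has no $\PP'$-extension in $U_s$ while every $\PP'$-extension of $q_s$ does have an extension in $U_s$; but these are exactly the hypotheses under which the construction inside the proof of Lemma~\ref{L:S1Density} produces, computably in $c$, an increasing $h:\NN\to\NN$ with every $\bigcup_{x=h(n)}^{h(n+1)-1}\set{y : c(x,y)=1}$ infinite. Here we invoke that construction rather than the statement of Lemma~\ref{L:S1Density}: we are not assuming that no such $h$ exists, we are deducing that one does.

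The point where this is subtle, and hence the main obstacle, is the gap between the $\Sigma^0_2$-definability of $\PP'$ and the demand that $g$ be only $0'$-computable: whether a condition of $\PP'$ has a $\PP'$-extension inside $U_s$ is a priori a $\Sigma^0_2$ question, hence not $0'$-decidable. The resolution is that we never decide this question, we only search for its witnesses, and the dichotomy that makes a genericity stage succeed --- \emph{either} the ``meet'' search halts \emph{or} the ``avoid'' search halts, failing which the first alternative already holds --- is exactly the content of Lemma~\ref{L:S1Density}, the transfer of $\Sigma^0_1$-density from $\PP$ to $\PP'$. So the whole argument is a single $0'$-oracle procedure whose success yields the $0'$-computable $1$-generic of the second alternative and whose first stalled search yields the computable $h$ of the first; no case analysis on $c$ is needed in advance.
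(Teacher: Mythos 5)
Your proposal is correct in substance and uses exactly the same machinery as the paper's proof: the poset $\PP'$, Lemma~\ref{L:S1Density} as a $\Sigma^0_1$-density transfer, and a $0'$-search construction of an increasing chain meeting or avoiding every $\Sigma^0_1$ set. What is different is the organization. The paper assumes outright that the first alternative fails, notes that this assumption is the standing hypothesis under which Lemma~\ref{L:S1Density} holds, and then shows the $0'$-chain construction always succeeds, producing the $g$ of the second alternative. You run the same $0'$-chain construction without any a priori assumption and, at any stall, extract a witness $h$ for the first alternative directly. The two are contrapositives of each other; yours makes the dichotomy constructive (a single algorithm whose outcome is read off), which is somewhat cleaner to state, but it forces you to unpack the \emph{proof} of Lemma~\ref{L:S1Density} at a stalled genericity stage rather than merely citing its statement. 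Your handling of the length-stage stall and of the $0'$-search for $\PP'$-conditions (enumerating $\Sigma^0_2$ witnesses and verifying the $\Pi^0_1$ part with $0'$) matches the paper's $0'$-enumeration of $\PP'$.

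One small imprecision worth flagging: you say that at a stalled genericity stage, the construction inside the proof of Lemma~\ref{L:S1Density} produces $h$ ``computably in $c$.'' That construction first needs a value $h(0)$ with $q_s\cat h(0) \in \PP'$, and finding such a value is a $\Sigma^0_2$ search, not a computable one; moreover no such $h(0)$ need exist. To make this airtight you should split the genericity stall into two subcases: if $q_s$ has no one-element $\PP'$-extension at all, fall back to the length-stage-stall construction $h(i) = q_s(|q_s|-1)+i$; if some such extension exists, take $h(0)$ to be the least one (this is a single finite parameter, determined in the ambient $\ACA_0$ by a case analysis, not by the $0'$-procedure), and then the remainder of the recursion $h(n+1) = \max(r_n)$ is genuinely computable from $c$, $q_s$, $h(0)$, and the index of $U_s$. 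The resulting $h$ is computable, which is all the proposition requires, but it is not \emph{uniformly} computable from $c$ alone. This same subtlety is present but unremarked in the paper's proof of Lemma~\ref{L:S1Density}; since you are unpacking that proof rather than citing the lemma as a black box, you inherit the burden of making the parameter explicit.
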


\begin{proof}
  Suppose there is no computable increasing function $h:\NN\to\NN$
  such that
  \begin{equation*}
    \bigcup_{x=h(n)}^{h(n+1)-1} \set{ y \in \NN : c(x,y) = 1 }
  \end{equation*}
  is infinite for every $n$. Let $\PP$ and $\PP'$ be defined as in the
  previous section. Note that Lemma~\ref{L:S1Density} applies to the
  countable coded $\omega$-model $\mathrm{REC}$ whose second-order
  part consists of all computable sets, as computed in our ambient
  model of $\ACA_0$. In our ambient model of $\ACA_0$, we have an effective
  listing $\seq{U_n}_{n=0}^\infty$ of all computably enumerable
  ($\Sigma^0_1$ over $\mathrm{REC}$) subsets of $\PP$. 

  Since $\PP'$ is $\Sigma^0_2$-definable (without parameters) we have
  a $0'$-computable enumeration $\seq{p_i}_{i=0}^\infty$ of $\PP'$.
  Define the sequence $\seq{q_n}_{n=0}^\infty$ of elements of $\PP'$
  as follows.
  \begin{itemize}
  \item Let $q_0$ be an arbitrary element of $\PP'$.
  \item Once $q_n$ has been defined, let $q_{n+1}$ be the first $p_i$
    in our enumeration such that $p_i \leq' q_n$ and either $p_i \in U_n$ or else there is no extension $r
    \leq p_i$ such that $r \in U_n$.
  \end{itemize}
  Lemma~\ref{L:S1Density} ensures that there always is a $q_{n+1}
  \leq' q_n$ as required by the second condition. Furthermore, since
  each $U_n$ is computably enumerable, the requirements for the second
  condition can be checked using $0'$ as an oracle. 

  It follows that the sequence $\seq{q_n}_{n=0}^\infty$ is
  $0'$-computable and hence so is $g = \bigcup_{n=0}^\infty q_n$. Note
  that $g$ is a well-defined increasing function $\NN\to\NN$ since our
  listing $\seq{U_n}_{n=0}^\infty$ includes all of the open dense sets
  $\set{p \in \PP : |p| \geq i}$. Moreover, $g$ is clearly $1$-generic
  and since every initial segment of $g$ is in $\PP'$, we see that
 \begin{equation*}
    \bigcup_{x=g(n)}^{g(n+1)-1} \set{ y \in \NN : c(x,y) = 0 }
  \end{equation*}
  is cofinite for every $n$.
\end{proof}

\noindent
Since $1$-generic degrees below $0'$ are low, by iterating the
relativized form of Proposition~\ref{P:1GenLow}, we see that:

\begin{corollary}[$\ACA_0$]
  There is a countable coded $\omega$-model of $\RCA_0 + \hw\RT^2_2$ whose
  second-order part consists entirely of low sets.
\end{corollary}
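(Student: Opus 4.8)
The plan is to build the required model as the Turing ideal generated by a $\leq_T$-increasing sequence $\emptyset = A_0 \leq_T A_1 \leq_T A_2 \leq_T \cdots$ of \emph{low} sets. Since lowness is downward closed under Turing reducibility, the second-order part $\mathcal{S} = \set{X : (\exists n)(X \leq_T A_n)}$ will consist entirely of low sets, and being a Turing ideal it is a countable coded $\omega$-model of $\RCA_0$. To arrange $\mathcal{S} \models \hw\RT^2_2$, one builds the sequence by a recursion with standard bookkeeping: at stage $s = \seq{e,n}$, if $\Phi_e^{A_n}$ is (the characteristic function of) a coloring $c:\NN^2\to\set{0,1}$, apply the relativization of Proposition~\ref{P:1GenLow} to the oracle $A_s$, which computes $c$ since $A_n \leq_T A_s$. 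Either there is an $A_s$-computable increasing $h$ making $\bigcup_{x=h(i-1)}^{h(i)-1}\set{y : c(x,y) = 1}$ infinite for every $i \geq 1$, in which case, by Proposition~\ref{P:HWEquiv}, $h$ already witnesses $\hw\RT^2_2$ for $c$ inside $\mathcal{S}$ (with colour $1$) and we put $A_{s+1} = A_s$; or there is an $A_s$-$1$-generic increasing $g \leq_T A_s'$ making $\bigcup_{x=g(i-1)}^{g(i)-1}\set{y : c(x,y) = 0}$ cofinite, hence infinite, for every $i \geq 1$, in which case, again by Proposition~\ref{P:HWEquiv}, $g$ witnesses $\hw\RT^2_2$ for $c$ (with colour $0$) once we put $A_{s+1} = A_s \oplus g$. (When $\Phi_e^{A_n}$ is not such a coloring, set $A_{s+1} = A_s$.) Because the sequence is increasing, every coloring in $\mathcal{S}$ has the form $\Phi_e^{A_n}$ and is handled at stage $\seq{e,n}$, so $\mathcal{S} \models \hw\RT^2_2$.

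The decisive point is that each $A_n$ stays low, which is checked by (external) induction on $n$ using the relativized form of the fact that $1$-generic reals are in $\mathrm{GL}_1$: if $g$ is $A$-$1$-generic then $(A \oplus g)' \equiv_T A' \oplus g$. Whenever $A_{s+1} = A_s \oplus g$ with $g$ as produced above, this gives $A_{s+1}' \equiv_T A_s' \oplus g \leq_T A_s'$ since $g \leq_T A_s'$, and $A_s' \leq_T 0'$ by the inductive hypothesis that $A_s$ is low; hence $A_{s+1}$ is low, and the other case $A_{s+1} = A_s$ is trivial. All the reductions involved are uniform, so the $A_n$ are uniformly low, the map $n \mapsto A_n$ is $0'''$-computable (the extra jumps absorbing the totality checks needed in the bookkeeping), and $\seq{A_n}$ together with a code for the Turing ideal it generates are arithmetically definable; since $\ACA_0$ proves the existence of all arithmetically definable sets, the whole argument goes through in $\ACA_0$.

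The main obstacle is precisely this maintenance of lowness. One cannot simply adjoin an arbitrary new low set at each stage, because the join of two low sets need not be low; one must instead adjoin, at each stage where the first alternative fails, a $1$-generic real that is computable from the current jump — which is exactly what the second alternative of Proposition~\ref{P:1GenLow} provides — and then invoke the $\mathrm{GL}_1$ property to see that the jump of the enlarged set is still $\leq_T 0'$. Everything else — the bookkeeping, the verification that $\mathcal{S}$ is a Turing ideal and hence a model of $\RCA_0$, and the absoluteness of the arithmetical witnesses between the ambient model and $\mathcal{S}$ — is routine.
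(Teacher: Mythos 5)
Your proposal is correct and follows exactly the route the paper has in mind: iterate the relativized form of Proposition~\ref{P:1GenLow}, at each stage either already having a computable witness for $\hw\RT^2_2$ or adjoining a $1$-generic $g \leq_T A_s'$, and use the relativized fact that $1$-generics below the jump are low (via $\mathrm{GL}_1$) to keep the ideal inside the low sets. The paper states this as a one-line remark before the corollary; you have simply spelled out the bookkeeping and the lowness induction, which is exactly what is intended.
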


\noindent
It was shown by Downey, Hirschfeldt, Lempp, and
Solomon~\cite{DowneyHirschfeldtLemppSolomon}
that $\st\RT^2_2$ has no $\omega$-model whose second-order part
consists entirely of low sets. A close inspection of their argument
shows that this can be formalized in $\ACA_0$.

\begin{corollary}[$\ACA_0$]
  $\hw\RT^2_2$ does not imply $\st\RT^2_2$ over $\RCA_0$.
\end{corollary}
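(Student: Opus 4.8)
The plan is to simply combine the two results immediately preceding this corollary, both of which are available over $\ACA_0$. Work in $\ACA_0$. By the previous corollary, fix a countable coded $\omega$-model $\MM$ of $\RCA_0 + \hw\RT^2_2$ whose second-order part consists entirely of low sets. By the theorem of Downey, Hirschfeldt, Lempp, and Solomon~\cite{DowneyHirschfeldtLemppSolomon}, which (as noted above) formalizes in $\ACA_0$, no $\omega$-model of $\st\RT^2_2$ can have a second-order part consisting entirely of low sets; hence $\MM \nmodels \st\RT^2_2$. Unwinding this, there is a stable transitive coloring $c:[\NN]^2\to\set{0,1}$ in $\MM$ with no infinite homogeneous set in $\MM$, so $\MM \models \RCA_0 + \hw\RT^2_2 + \lnot\st\RT^2_2$. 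The existence of such a coded model is exactly what is needed to conclude, within $\ACA_0$, that $\RCA_0 + \hw\RT^2_2$ does not prove $\st\RT^2_2$.

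The one point requiring care — and already granted in the two statements being invoked — is the formalizability of the external facts at the right level. First, the construction behind the previous corollary must yield an honest countable coded $\omega$-model inside $\ACA_0$; this is handled by iterating the relativized form of Proposition~\ref{P:1GenLow}, using that $1$-generic degrees below the jump are low, together with the usual bookkeeping that runs through all potential counterexamples to $\hw\RT^2_2$. Second, the Downey--Hirschfeldt--Lempp--Solomon argument, originally phrased for the standard setting, must be checked to apply inside $\ACA_0$ to an arbitrary countable coded $\omega$-model all of whose members are low; this is the ``close inspection'' alluded to above, and it is the only step that is not pure bookkeeping.

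Once those two inputs are in hand, nothing further is needed: the conclusion is the immediate observation that a model of $\RCA_0 + \hw\RT^2_2$ refuting $\st\RT^2_2$ witnesses the failure of the implication. For context it is worth noting that this separation is consistent with the earlier results of this section, since $\hw\RT^2_2$ does imply the strictly weaker principle $\st\ADS$, and hence $\Bnd\Pi^0_1$.
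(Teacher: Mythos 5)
Your proof is correct and follows exactly the route the paper intends: combine the preceding corollary (a low $\omega$-model of $\RCA_0 + \hw\RT^2_2$) with the Downey--Hirschfeldt--Lempp--Solomon result (no low $\omega$-model of $\st\RT^2_2$), both formalized in $\ACA_0$. Nothing to add.
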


\noindent
It follows that $\hw\RT^2_2$ also doesn't imply $\wk\RT^2_2$ since the
latter implies $\st\RT^2_2$ over $\RCA_0$.

\section{Conclusions and Questions}

In the first part of this paper, we investigated various formulations
of the pigeonhole principle for finite ordinal powers of the ordinal
$\omega$. The weakest such principle $\EIndec^n$ was found to be
sandwiched between two standard induction principles. Namely,
Theorem~\ref{T:elem} showed that 
\begin{equation*}
  \xymatrix{
    \Ind{\Sigma^0_{n+1}} \ar[r] & \EIndec^n \ar[r] & \Bnd{\Pi^0_n}.
  }
\end{equation*}
It is known that $\Ind{\Sigma^0_{n+1}}$ is strictly stronger than
$\Bnd{\Pi^0_n}$~\cite{HajekPudlak}, but we don't know how $\EIndec^n$
sits in between the two.

\begin{question}
  Does $\EIndec^n$ lie strictly between $\Ind{\Sigma^0_{n+1}}$ and
  $\Bnd{\Pi^0_n}$ in the hierarchy of induction principles?
\end{question}

\noindent
Hirst's result that $\EIndec^1$ is equivalent to $\Bnd{\Pi^0_1}$
suggests that $\EIndec^n$ might be equivalent to
$\Bnd{\Pi^0_n}$. Indeed, there is hope that some induction could be
shaved off from our proof of Proposition~\ref{P:IndAndEIndec}.

The two stronger principles $\LIndec^n_k$ and $\GIndec^n_k$ turned out
to be equivalent to $\ACA_0$ when $n \geq 3$ and $k \geq 2$. However,
for $n = 2$, both $\LIndec^2_k$ and $\GIndec^2_k$ follow from
$\RT^2_k$ (indeed $\wk\RT^2_k$), which is known to be strictly weaker
than $\ACA_0$~\cite{SeetapunSlaman,CholakJockuschSlaman}. This led us
to consider two weak forms of Ramsey's Theorem for pairs, namely
$\wk\RT^2_k$ and $\hw\RT^2_k$. Another interesting possible weakening
of $\RT^2_k$ was considered by Dzhafarov and
Hirst~\cite{DzhafarovHirst}, namely the Increasing Polarized Theorem
for pairs ($\IPT^2_k$), which is sandwitched between $\RT^2_k$ and
$\wk\RT^2_k$. The known implications between these principles in the
case $k = 2$ are summarized in the following diagram:
\begin{equation*}
  \xymatrix{
    \RT^2_2 \ar[r] & \IPT^2_2 \ar[r] & \GIndec_2^2 \ar@{<->}[r] \ar[d] & \wk\RT^2_2 \ar[r] \ar[d] & \st\RT^2_2 \ar[d] \\
    && \LIndec_2^2 \ar[r] & \hw\RT^2_2 \ar[r] \ar[ru]|| & \st\ADS
    \ar[r] & \Bnd\Pi^0_1.
  }
\end{equation*}
Besides for the non-implications $\hw\RT^2_2 \not\lthen \st\RT^2_2$,
$\Bnd{\Pi^0_1} \not\lthen \st\ADS$, and their consequences, we do not
know whether any of the remaining implications are strict. Many of the
resulting questions are special cases or refinements of the open
questions
from~\cite{CholakJockuschSlaman,HirschfeldtShore,DzhafarovHirst}. For
example, it is still an open question whether $\st\RT^2_2$ implies
$\RT^2_2$~\cite[Question~13.6]{CholakJockuschSlaman}. Of the remaining
questions, we wonder the following.

\begin{question}\label{Q:hwlindec}
  Is $\hw\RT^2_2$ strictly weaker than $\LIndec^2_2$?
\end{question}

\begin{question}\label{Q:wklindec}
  Is $\wk\RT^2_2$ strictly stronger than $\LIndec^2_2$?
\end{question}

\noindent
Of course, a negative answer to Question~\ref{Q:hwlindec} would
provide a positive answer to Question~\ref{Q:wklindec}. Similarly, a
negative answer to Question~\ref{Q:wklindec} would provide a positive
answer to Question~\ref{Q:hwlindec}. However, it is plausible that
both questions have a positive answer. 

In another line of thought, we wonder how $\hw\RT^2_2$ is related to
other combinatorial consequences of Ramsey's Theorem for pairs. Of
particular interest is the following.

\begin{question}
  How is $\hw\RT^2_2$ related to the principle $\st\CAC$ of
  Hirschfeldt and Shore~\cite{HirschfeldtShore}?
\end{question}

\noindent
Indeed, the similarity between the forcing construction from
Section~\ref{S:Forcing} and those used by Hirschfeldt and Shore
suggests that there might be some non-trivial ties between these two
principles.

\bibliographystyle{amsplain}
\bibliography{igame}

\end{document}